\theoremstyle{plain}
\newtheorem*{theo}{Theorem}
\newtheorem{theorem}{Theorem}[section]
\newtheorem{lemma}[theorem]{Lemma}
\newtheorem{definition}[theorem]{\bf Definition}
\newtheorem{proposition}[theorem]{\bf Proposition}
\newtheorem{remark}[theorem]{Remark}
\newtheorem{parrafo}[theorem]{}
\newcommand{\grad}{\textnormal{grad}}
\newcommand{\Gr}{\operatorname{Gr}}
\newcommand{\rank}{\textnormal{rank}}
\newcommand{\codim}{\textnormal{codim}}
\newcommand{\Jac}{\textnormal{Jac}}
\newcommand{\End}{\textnormal{End}}
\newcommand{\Lie}{\textnormal{Lie}}
\newcommand{\Hom}{\textnormal{Hom}}
\newcommand{\Aut}{\textnormal{Aut}}
\newcommand{\Conv}{\textnormal{Conv}}
\newcommand{\Stab}{\textnormal{Stab}}
\newcommand{\diag}{\textnormal{diag}}
\newcommand{\Sym}{\textnormal{Sym}}
\newcommand{\Ant}{\textnormal{Ant}}
\begin{document}
\title[The Hodge--Poincar\'e polynomial of the moduli spaces of
stable vector bundles] {The Hodge--Poincar\'e polynomial of the
moduli spaces of stable vector bundles over an algebraic curve}
\thanks{This work has been partially supported by a EC Training Fellowship,
within the Research Training Network LIEGRITS: Flags, Quivers and
Invariant Theory in Lie Representation Theory, which is a Marie
Curie Research Training Network funded by the European community
as project MRTN-CT 2003-505078} \subjclass{14H60, 14D20, 14F45}
\keywords{Moduli spaces, vector bundles, Hodge--Poincar\'e
polynomials}

\author{Cristian Gonz\'alez-Mart\'inez}
\address{{\it Formerly at:} Department of Mathematics\\Tufts University\\Bromfield-Pearson Building\\503 Boston
Avenue\\
Medford, MA 02155\\USA.}
\address{{\it Currently at:} DG-Payments and Market Infrastructure\\European Central Bank\\Neue Mainzer Stra$\beta$e\\ 60311 Frankfurt am Main\\Germany.}
\email{c.gonzalez-martinez@hotmail.com}

\maketitle

\begin{abstract}Let X be a nonsingular complex projective variety that
is acted on by a reductive group $G$ and such that $X^{ss} \neq
X_{(0)}^{s}\neq \emptyset$. We give formulae for the
Hodge--Poincar\'e series of the quotient $X_{(0)}^s/G$. We use
these computations to obtain the corresponding formulae for the
Hodge--Poincar\'e polynomial of the moduli space of properly stable vector
bundles when the rank and the degree are not coprime. We compute
explicitly the case in which the rank equals 2 and the degree is
even.\end{abstract}



\section{Introduction and statement of results}

Let $\mathcal{M}(n,d)$ be the moduli space of semistable vector bundles of rank $n$ and degree $d$. The cohomology of  $\mathcal{M}(n,d)$ has been of
great interest to a large number of mathematicians for the last
40 years. If we denote by $\mathcal{M}_{(0)}^s(n,d)$ the moduli
space of (properly) stable vector bundles, it is not difficult to
see that when $(n,d)=1$ then $\mathcal{M}(n,d)=
\mathcal{M}_{(0)}^s(n,d)$.

The first results on the cohomology of $\mathcal{M}(n,d)$ are due
to P. E. Newstead who computed the Betti numbers of
$\mathcal{M}(2,1)= \mathcal{M}_{(0)}^s(2,1)$ from the results
obtained in his paper \cite{N1}. From these results Harder
observed that the Betti numbers of $\mathcal{M}(2,1)$ can also be
computed by arithmetic methods and the Weil conjectures \cite{H}.
The latter method was generalized by Harder and Narasimhan to
obtain the Betti numbers of $\mathcal{M}(n,d)$ when $(n,d)=1$ (see
\cite{HN}).

Another way to carry out these computations was introduced by
Atiyah and Bott in their seminal paper \cite{AB}. This is based on
the definition of a stratification in the infinite dimensional
space of all possible holomorphic structures on a fixed
$\mathcal{C}^{\infty}$ bundle of rank $n$ and degree $d$. The
stratification turns out to be equivariantly perfect with respect
to the action of the gauge group that is acting on the space of
all possible holomorphic structures. Then, the equivariant Morse
inequalities deduced from that allow us to obtain an inductive
formula for the equivariant Betti numbers of the stratum
classifying the semistable points. From this, when $(n,d)=1$, they
obtain a formula for the Betti numbers of $\mathcal{M}(n,d)$ by
representing this moduli space as a geometric invariant theory (GIT)
quotient of the space of all possible holomorphic structures by
the action of the group of complex automorphisms or complexified
gauge group.

There is still another way of doing this that is described in \cite{K1}. Except for the fact that Atiyah and Bott's method
for computing the Betti numbers works with infinite-dimensional
spaces and groups, the method of \cite{K1} can be regarded as a
generalization of that of \cite{AB}.

Note that there are several ways of representing
$\mathcal{M}(n,d)$ as a geometric invariant theory quotient, in
this case, as the quotient of a nonsingular projective algebraic
variety by the action of an algebraic reductive group. One can
look at the cohomology of the GIT quotient $X/\!/G$ of a nonsingular
projective algebraic variety $X$ acted on by an algebraic
reductive group $G$. Let $X^{ss}$ and $X_{(0)}^{s}$ denote the set
of semistable and properly stable points for the action of $G$.
When $X_{(0)}^s=X^{ss}$ (note that for $\mathcal{M}(n,d)$ this
corresponds to $\mathcal{M}(n,d) = \mathcal{M}_{(0)}^s(n,d)$ or
what is the same, when $(n,d)=1$), in \cite{K1} the Poincar\'e
polynomial of $X/\!/G$ is computed bearing in mind that there is a
natural identification between the cohomology of $X/\!/G$ with
rational coefficients and the equivariant cohomology of $X^{ss}$
with rational coefficients.
\begin{parrafo}\label{Introduccion stratification}\textnormal{Actually, it is proved that
there is a stratification $\{ \mathcal{S}_{\beta}:\beta \in
\mathcal{B}\}$ of $X$ by nonsingular $G$-invariant locally closed
subvarieties $\mathcal{S}_{\beta}$ satisfying the following
properties (see \cite{K1}):
\begin{itemize}\item[(i)]$X^{ss}$ coincides with the unique open
stratum $\mathcal{S}_0$.\item[(ii)]The equivariant Morse
inequalities are equalities, that is
$$P^G_t(X)=P^G_t(X^{ss})+\sum_{\beta \neq 0}t^{2\codim
\mathcal{S}_{\beta}}P^{G}_t(S_{\beta}).$$If $H_G^{\ast}(Y)$
denotes the $G$-equivariant cohomology ring of $Y$, then
$P_t^G(Y)=\sum_{i}t^i \dim H_G^i(Y)$ is the equivariant Poincar\'e
series of $Y$. Here $\codim \mathcal{S}_{\beta}$ denotes the complex
codimension of $\mathcal{S}_{\beta}$ in $X$. In general one has to
take care of the possibility of the strata being not connected but
for our purposes we don't need to consider this. \item[(iii)]If
$\beta \neq 0$ there is a proper nonsingular subvariety
$Z_{\beta}$ of $X$ invariant under the action of a reductive
subgroup Stab$\beta$ of $G$ such that
$$H^{\ast}_G (\mathcal{S}_{\beta})\cong H^{\ast}_{\Stab \beta}
(Z_{\beta}^{ss}),$$where $Z_{\beta}^{ss}$ is the semistable
stratum of $Z_{\beta}$ under the action of $\Stab \beta$ appropriately
linearized.
\end{itemize}}\end{parrafo}

Under the hypothesis of $X_{(0)}^s=X^{ss}$ one has that
$P^G_t(X^{ss})=P_t(X/\!/G)$ where $P_t$ is the usual Poincar\'e
polynomial. Then from the identity in (ii) one may obtain a formula that computes the Betti numbers of $X/\!/G$ when
$X_{(0)}^s=X^{ss}$ (see \cite[Theorem 8.12]{K1}).

The stratification $\{ \mathcal{S}_{\beta} :\beta \in \mathcal{B}
\}$ can be defined either using the moment map and symplectic
geometry or algebraically. The indexing set $\mathcal{B}$ is going
to be given by a finite set of points in a positive Weyl chamber
of the Lie algebra of a maximal compact torus $T$ of $G$.

Our research is focussed on the cohomology of the moduli space of
properly stable vector bundles $\mathcal{M}_{(0)}^s(n,d)$ when
$(n,d)\neq 1$. If we represent $\mathcal{M}_{(0)}^s(n,d)$ as a GIT
quotient $X/\!/G$, the condition $(n,d)\neq 1$ implies that
$X^{ss}\neq X_{(0)}^{s}\neq \emptyset$ or what is the same, there
are semistable points in $X$ that are not properly stable.

When $X^{ss}\neq X_{(0)}^{s}\neq \emptyset$ it could happen that
$X/\!/G$ may have serious singularities. In \cite{K2} F. Kirwan
shows a way of blowing up the variety $X$ along a sequence of
nonsingular subvarieties to obtain a variety $\widetilde{X}$ with
a linear action of $G$ such that
$\widetilde{X}^{ss}=\widetilde{X}_{(0)}^s$. Then,
$\widetilde{X}/\!/G$ can be regarded as a ``partial'' resolution of
singularities of ${X}/\!/G$ in the sense that the most serious
singularities of ${X}/\!/G$ have been resolved. This may be used to
compute the Betti numbers of $\widetilde{X}/\!/G$ in terms of those
of ${X}/\!/G$ and the dimensions of the rational intersection
homology groups of ${X}/\!/G$ in terms of Betti numbers of the
partial desingularisations $\widetilde{X}/\!/G$. In \cite{K4} these
techniques are applied to the case of the moduli space of
semistable vector bundles of rank $n$ and degree $d$ when
$(n,d)\neq 1$.

From this resolution of singularities, in \cite{K6} the
stratification $\{ \mathcal{S}_{\beta}:\beta \in \mathcal{B}\}$ is
refined to obtain a stratification of the set $X^{ss}$ of
semistable points, so that the set $X_{(0)}^s$ of properly stable
points is an open stratum. This new stratification is not
equivariantly perfect hence one may not expect to obtain nice
formulae for the Betti numbers of the geometric quotient
$X_{(0)}^s/G$.

In this paper we use Deligne's extension of Hodge theory together
with the previous refined stratification in order to study the
Hodge--Poincar\'e series of a nonsingular projective variety that
is acted on by a reductive group and such that $X^{ss}\neq
X_{(0)}^{s}\neq \emptyset$. This can be carried out because of the
good properties of the Hodge--Deligne series and its relationship with
the Hodge--Poincar\'e series given by Poincar\'e duality. We obtain
formulae for these series. After that we use these
computations to obtain corresponding formulae for the
Hodge--Poincar\'e series of the moduli space of stable vector
bundles when the rank and the degree are not coprime. Finally, we
compute explicitly the Hodge--Poincar\'e polynomial of the moduli
space of (properly) stable vector bundles when the rank is 2 and
the degree is even. Using Poincar\'e duality one may obtain the Hodge--Deligne polynomial of $\mathcal{M}^s_{(0)}(2,d)$ when $d$ is even. The latter was first computed in \cite{MOV2}. The Hodge--Poincar\'e polynomial is given in the following
Theorem.
\begin{theo}The Hodge--Poincar\'e polynomial of $\mathcal{M}^s_{(0)}(2,d)$ for $d$ even is
given
by\begin{align*}HP(&\mathcal{M}^s_{(0)}(2,d))(u,v)=\frac{1}{2(1-uv)(1-u^2v^2)}\bigg{[}2(1+u)^g(1+v)^g
(1+u^2v)^g(1+uv^2)^g-\\&
-(uv)^{g-1}(1+u)^{2g}(1+v)^{2g}(2-(uv)^{g-1}+(uv)^{g+1})-(uv)^{2g-2}(1-u^2)^g(1-v^2)^g(1-uv)^2\bigg{]}.
\end{align*}\end{theo}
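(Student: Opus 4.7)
The plan is to specialise the general formulae for the Hodge--Poincar\'e series of $X^s_{(0)}/G$ obtained earlier in the paper to the concrete GIT presentation of $\mathcal{M}^s_{(0)}(2,d)$ with $d$ even. First I would realise $\mathcal{M}(2,d)$ as a GIT quotient $X/\!/G$ of a smooth projective parameter space (for example a suitable Quot scheme, or the algebraic model used in \cite{K4}); since $d$ is even, $X^{ss}\neq X^s_{(0)}\neq \emptyset$, so the general machinery developed earlier in the paper applies. The Hodge--Poincar\'e series of $X^s_{(0)}/G$ is then the equivariant Hodge--Poincar\'e series of $X^{ss}$ corrected by the contributions of the strata with $\beta\neq 0$ in the refined stratification recalled in \ref{Introduccion stratification}, each one expressed via an auxiliary variety $Z_\beta$ under the action of $\Stab\beta$.

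In the rank $2$, even-degree case the picture is particularly simple: beyond the properly stable stratum there is essentially one extra stratum, corresponding to strictly semistable bundles of the form $L_1\oplus L_2$ with $\deg L_1 = \deg L_2 = d/2$. The computation then splits into two pieces. The equivariant Hodge--Poincar\'e series $HP^G_{(u,v)}(X^{ss})$, obtained by a Hodge-theoretic refinement of the Atiyah--Bott recursion, uses $HP(\Jac)=(1+u)^g(1+v)^g$ and the mixed Hodge structure on the universal extension bundle to produce the factor $(1+u^2v)^g(1+uv^2)^g$; this assembles into the ``stable-like'' contribution $(1+u)^g(1+v)^g(1+u^2v)^g(1+uv^2)^g/\big((1-uv)(1-u^2v^2)\big)$. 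The contribution of the $\beta\neq 0$ stratum is modelled on two copies of $\Jac^{d/2}(X)$ made symmetric by the $\mathbb{Z}/2$ action swapping the summands, together with the projective bundle of nontrivial extensions whose rank is dictated by Riemann--Roch on the curve and produces the powers $(uv)^{g-1}$ and $(uv)^{2g-2}$; the invariant/anti-invariant decomposition of cohomology under $\mathbb{Z}/2$ accounts both for the factor $\tfrac12$ in the denominator and for the two subtracted terms $(uv)^{g-1}(1+u)^{2g}(1+v)^{2g}(2-(uv)^{g-1}+(uv)^{g+1})$ and $(uv)^{2g-2}(1-u^2)^g(1-v^2)^g(1-uv)^2$.

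The main obstacle is the careful bookkeeping of this $\mathbb{Z}/2$-action: splitting the Hodge--Poincar\'e contribution of the semistable-not-stable locus into invariants and anti-invariants is precisely where the refined stratification of \cite{K6} fails to be equivariantly perfect, and it is there that the factors $1\pm u$ and $1\pm v$ interlock to yield the third, antisymmetric correction. Once each ingredient is in place, the remainder of the argument is algebraic simplification: placing the three contributions over the common denominator $2(1-uv)(1-u^2v^2)$ yields the stated identity. As a consistency check, applying Poincar\'e duality to the smooth orbifold $\mathcal{M}^s_{(0)}(2,d)$ of complex dimension $4g-3$ should return the Hodge--Deligne polynomial of \cite{MOV2}, thereby validating the computation.
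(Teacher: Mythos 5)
Your overall strategy (GIT presentation, equivariant Hodge--Poincar\'e series of the semistable locus via the Earl--Kirwan recursion, subtraction of unstable strata weighted by codimension, Poincar\'e duality as a check) is the right one and matches the paper's framework. However, there is a genuine gap in your accounting of the strata: you assert that ``beyond the properly stable stratum there is essentially one extra stratum, corresponding to strictly semistable bundles of the form $L_1\oplus L_2$.'' This is not how the refined stratification works here. Kirwan's partial desingularisation for rank $2$, even degree requires \emph{two} blow-ups -- first along the locus where $E_h\cong L\oplus L$ (stabiliser $R_1=GL(2)$), then along the proper transform of the locus $E_h\cong L_1\oplus L_2$ with $L_1\ncong L_2$ (stabiliser $R_2=T$) -- and the resulting stratification of $\mathcal{R}^{ss}(2,d)$ has \emph{four} unstable strata, each needing its own equivariant computation: $\Sigma_{GL(2)}$ (split bundles $L\oplus L$, codimension $3g$, contributing $(uv)^{3g}(1+u)^g(1+v)^g/((1-uv)(1-u^2v^2))$ via $H^{\ast}(BGL(2))$); $\Sigma_{\beta,1}$ (non-split self-extensions $0\to L\to E_h\to L\to 0$, codimension $2g-1$); $\Sigma_T$ (the $L_1\oplus L_2$, $L_1\ncong L_2$ locus you describe, codimension $2g-2$); and $\Sigma_{\beta,2}$ (non-split extensions with $L_1\ncong L_2$, codimension $g-1$). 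Your sketch captures only the two strata of the second blow-up -- the powers $(uv)^{g-1}$ and $(uv)^{2g-2}$ you cite are precisely their codimensions -- but omits the $GL(2)$-step contributions entirely. Without the $(uv)^{3g}$ and $(uv)^{2g-1}$ terms, the common-denominator algebra does not close: these are exactly what combine with the $-(uv)^{g+1}(1+u)^{2g}(1+v)^{2g}$ term in the Earl--Kirwan series for $HP_{GL(p)}(\mathcal{R}(2,d)^{ss})$ to yield the stated coefficient $(2-(uv)^{g-1}+(uv)^{g+1})$.

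Two further points of bookkeeping that your plan would get wrong as stated. First, the $\mathbb{Z}/2$ invariant/anti-invariant decomposition (the source of the $(1-u^2)^g(1-v^2)^g$ term and the genuine halving) enters \emph{only} for $\Sigma_T$, where $\pi_0(N(T))=N^T/T\cong\mathbb{Z}/2$ acts on $H^{\ast}(BT)\otimes H^{\ast}(\Jac^{d/2}\times\Jac^{d/2}\setminus\Delta)$; for the extension stratum $\Sigma_{\beta,2}$ the relevant group $N(T)\cap\Stab\beta\cong(T\times GL(p/2))/\mathbb{C}^{\ast}$ is connected, so no invariants are taken there -- symmetrising the whole semistable-not-stable locus under $\mathbb{Z}/2$, as your sketch suggests, would double-count. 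Second, the reason one can subtract strata at all despite the stratification failing to be equivariantly perfect is not the $\mathbb{Z}/2$-splitting but the additivity of Hodge--Deligne polynomials combined with Poincar\'e duality on the finite-dimensional approximations $(\Sigma_\gamma\times_G EG)_m$ (Proposition \ref{propositionidentity}); this is what converts the would-be Morse inequalities into an exact identity for Hodge--Poincar\'e series, and it is the step your proposal leaves implicit.
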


The layout of the paper is as follows. In Section \ref{Hodge} we give
an account of the results on Deligne's extension of Hodge theory, that we shall need
throughout this paper. In Section \ref{Stratifications} we recall the definitions and explicit descriptions of the Morse
stratification $\{ \mathcal{S}_{\beta} \}_{\beta \in \mathcal{B}}$
of $X$ and its refined stratification introduced by Kirwan in \cite{K1} and \cite{K6}. In Section \ref{Cohomological formulae}, from the
previous stratifications, we obtain formulae for the
Hodge--Poincar\'e series of the geometric quotient
$X_{(0)}^{s}/G$. These formulae are adapted in Section \ref{Cohomological formulae for the moduli space of stable vector bundles
when the rank and the degree are not coprime} to obtain
the corresponding ones for $\mathcal{M}_{(0)}^{s}(n,d)$ when
$(n,d)\neq 1$. Finally, in Section \ref{Cohomological formulae for the moduli space of stable vector bundles
when the rank is 2 and the degree is even}  we compute explicitly the
Hodge--Poincar\'e polynomial of $\mathcal{M}_{(0)}^{s}(2,d)$ when
$d$ is even.


\section{Hodge Theory}\label{Hodge}


We use Deligne's extension of Hodge theory which applies to
varieties which are not necessarily compact, projective or smooth
(see \cite{D2}, \cite{D3} and \cite{D4}). We start by giving a
review of the notions of pure Hodge structure, mixed Hodge
structure, Hodge--Deligne and Hodge--Poincar\'{e} polynomials.

\begin{definition}\textnormal{A \emph{pure Hodge structure of weight $m$} is given by
a finite dimensional $\mathbb{Q}$-vector space $H_{\mathbb{Q}}$
and a finite decreasing filtration $F^p$ of
$H=H_{\mathbb{Q}}\otimes \mathbb{C}$
$$H\supset \ldots \supset F^p \supset \ldots \supset ( 0 ) , $$
called \emph{the Hodge filtration}, such that $H=F^p\oplus
\overline{F^{m-p+1}}$ for all $p$. When $p+q=m$, if we set
$H^{p,q}=F^p \cap \overline{F^q}$, the condition $H=F^p\oplus
\overline{F^{m-p+1}}$ for all $p$ implies an equivalent definition
for a pure Hodge structure, that is, a decomposition
$H=\bigoplus_{p+q=m}H^{p,q}$
satisfying that $H^{p,q}=\overline{H^{q,p}}$, where
$\overline{H^{q,p}}$ is the complex conjugate of $H^{q,p}$. }
\end{definition}

\begin{definition}\label{MHS}\textnormal{A \emph{mixed Hodge structure} consists
of a finite dimensional $\mathbb{Q}$-vector space
$H_{\mathbb{Q}}$, an increasing filtration $W_l$ of
$H_{\mathbb{Q}}$, called \emph{the weight filtration}$$\ldots
\subset W_l \subset \ldots \subset H_{\mathbb{Q}},$$and the Hodge
filtration $F^p$ of $H=H_{\mathbb{Q}}\otimes \mathbb{C}$, where
the filtrations $F^p Gr_l^W$ induced by $F^p$ on $$Gr_l^W=(W_l
H_{\mathbb{Q}} / W_{l-1}H_{\mathbb{Q}})\otimes \mathbb{C} = W_l H
/ W_{l-1}H$$ give a pure Hodge structure of weight $l$. Here $F^p
Gr_l^W$ is given by $(W_lH\cap F^p +W_{l-1}H)/W_{l-1}H.$}
\end{definition}

Associated to the Hodge filtration and the weight filtration we
can consider the quotients $Gr_l^W=W_l/W_{l-1}$ of Definition
\ref{MHS}, and for the Hodge filtration
$Gr_F^pGr_l^W=F^pGr_l^W/F^{p+1}Gr_l^W$.

\begin{definition}\textnormal{The \emph{Hodge numbers} of $H$ are$$h^{p,q}(H)=\dim Gr_F^p
Gr_{p+q}^W.$$}
\end{definition}

\begin{parrafo}\textnormal{Let 
$H_{\mathbb{Q}}$ and $H_{\mathbb{Q}}'$ be two mixed Hodge structures, with filtrations $W_m$
and $F^p$, and $W_l'$ and ${F'}^q$ respectively. A \emph{morphism of mixed Hodge
structures} is a linear map $$L:H_{\mathbb{Q}} \rightarrow
H_{\mathbb{Q}}'$$ satisfying $L(W_m)\subset W'_{m}$ and
$L(F^p)\subset F'^{p}$. }\end{parrafo}

Deligne proved that the usual cohomology groups $H^k(X,\mathbb{Q})$ and those with compact support, which we denote by
$H^k_c(X)$, of a complex variety $X$ which may be singular
and not projective, carry a mixed
Hodge structure (see \cite{D2}, \cite{D3} and \cite{D4}). 
\begin{definition}\textnormal{For any complex algebraic variety $X$,
we define its \emph{Hodge--Deligne polynomial} (or virtual Hodge
polynomial) as (see \cite{DK})
$$\mathcal{H}(X)(u,v)=\sum_{p,q,k}(-1)^{p+q+k}h^{p,q}(H_c^k(X))u^p v^q \in \mathbb{Z}[u,v].$$We define its \emph{Hodge--Poincar\'{e} polynomial} as
$$HP(X)(u,v)=\sum_{p,q,k}(-1)^{p+q+k}h^{p,q}(H^k(X))u^p v^q .$$}
\end{definition}

Danilov and Khovanski\v{i} (\cite{DK}) observed that
$\mathcal{H}(X)(u,v)$ coincides with the classical Hodge
polynomial when $X$ is smooth and projective. 

\begin{remark}\textnormal{When our algebraic variety $X$ is smooth, Poincar\'{e} duality gives us the
following functional identity relating Hodge--Deligne and
Hodge--Poincar\'{e} polynomials
\begin{equation}\label{identityDP}\mathcal{H}(X)(u,v)=(uv)^{\dim_{\mathbb{C}}
X}\cdot HP(X)(u^{-1},v^{-1})
\end{equation}where $\dim_{\mathbb{C}}
X$ denotes the complex dimension of $X$.}\end{remark}

\begin{theorem}[\cite{MOV1}, Theorem 2.2]\label{Theorem2.2}Let $X$ be a complex
variety. Suppose that $X$ is a finite disjoint union $X=\cup_i
X_i$, where $X_i$ are locally closed algebraic subvarieties. Then
$$\mathcal{H}(X)(u,v)=\sum_i\mathcal{H}(X_i)(u,v).$$
\end{theorem}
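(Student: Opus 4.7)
The plan is to prove additivity first for the simplest kind of stratification — a single open-closed decomposition — and then bootstrap to arbitrary finite disjoint unions of locally closed subvarieties by induction on the number of pieces.

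First, I will handle the case $X = U \sqcup Z$ with $U$ open and $Z = X \setminus U$ closed. The central input from Deligne's theory is that the long exact sequence of compactly supported cohomology
$$\cdots \to H^k_c(U) \to H^k_c(X) \to H^k_c(Z) \to H^{k+1}_c(U) \to \cdots$$
lives in the category of mixed Hodge structures, with all maps being morphisms of MHS. Since such morphisms are strict for both the weight and the Hodge filtrations, passing to the associated graded pieces $Gr^p_F Gr^W_{p+q}$ preserves exactness in each bidegree $(p,q)$. Taking the alternating sum of the Hodge numbers $h^{p,q}(H_c^k(\cdot))$ along this long exact sequence, weighted by $(-1)^{p+q+k} u^p v^q$, then yields the polynomial identity $\mathcal{H}(X) = \mathcal{H}(U) + \mathcal{H}(Z)$ in $\mathbb{Z}[u,v]$.

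Second, for a general decomposition $X = X_1 \sqcup \cdots \sqcup X_n$ into locally closed subvarieties, I induct on $n$. The base case $n=1$ is immediate. For the step, pick any piece, say $X_1$, and set $Y = \overline{X_1}$ (closed in $X$) and $V = X \setminus Y$ (open in $X$); since $X_1$ is locally closed in $X$ it is open in $Y$. Two applications of the first step give $\mathcal{H}(X) = \mathcal{H}(V) + \mathcal{H}(Y)$ and $\mathcal{H}(Y) = \mathcal{H}(X_1) + \mathcal{H}(Y \setminus X_1)$. Both $V$ and $Y\setminus X_1$ are stratified by the locally closed pieces $X_j \cap V$ and $X_j \cap (Y \setminus X_1)$ for $j \geq 2$, each a disjoint union with strictly fewer than $n$ nonempty pieces, so the inductive hypothesis applies to compute their Hodge--Deligne polynomials. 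For each $j \geq 2$, the decomposition $X_j = (X_j \cap V) \sqcup (X_j \cap Y)$ is itself an open-closed decomposition of $X_j$, so the first step gives $\mathcal{H}(X_j) = \mathcal{H}(X_j \cap V) + \mathcal{H}(X_j \cap (Y \setminus X_1))$. Reassembling these identities produces $\mathcal{H}(X) = \sum_{i=1}^n \mathcal{H}(X_i)$.

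The substance of the argument is concentrated in the first step: the existence of a functorial MHS on compactly supported cohomology, the existence of the MHS long exact sequence for an open-closed pair, and the strictness of MHS morphisms are the nontrivial ingredients drawn from Deligne's framework. By contrast, the induction in the second step is purely formal bookkeeping with intersections of locally closed sets, and presents no serious obstacle once the open-closed case is in hand.
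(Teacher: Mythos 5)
Your proof is correct. There is, however, no in-paper argument to compare it against: the paper states this result as a citation (Theorem 2.2 of \cite{MOV1}) and gives no proof, relying on it as a known property of the Hodge--Deligne polynomial. Your two-step argument is precisely the standard proof of additivity, going back to Danilov--Khovanski\v{i} \cite{DK} and Deligne: for an open--closed pair $X=U\sqcup Z$ the long exact sequence of compactly supported cohomology is a sequence of mixed Hodge structures, strictness of MHS morphisms makes $Gr_F^p\,Gr^W_{p+q}$ an exact functor, and the alternating sum of dimensions along the resulting exact sequence of graded pieces vanishes in each bidegree $(p,q)$; the general case then follows by your induction. Your bookkeeping in the inductive step is sound: since $X_1$ is locally closed it is open in $Y=\overline{X_1}$, the sets $X_j\cap V$ and $X_j\cap(Y\setminus X_1)$ are again locally closed and number at most $n-1$, and $X_j=(X_j\cap V)\sqcup(X_j\cap Y)$ with $X_j\cap Y=X_j\cap(Y\setminus X_1)$ is itself an open--closed decomposition, so the identities reassemble as you claim. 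One cosmetic point worth noting: the paper's convention carries the sign $(-1)^{p+q+k}$ rather than the more common $(-1)^{k}$; since the extra factor $(-1)^{p+q}$ is constant in each bidegree, the term-by-term additivity argument is unaffected, exactly as your proof implicitly uses.
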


Another result from \cite{MOV1} that will be useful for our
computations is
\begin{lemma}[\cite{MOV1}, Lemma 2.3]\label{lema2.3}Suppose that $\pi :X\rightarrow Y$
is an algebraic fibre bundle with fibre $F$ which is locally
trivial in the Zariski topology, then
$$\mathcal{H}(X)(u,v)=\mathcal{H}(F)(u,v)\cdot\mathcal{H}(Y)(u,v).$$
\end{lemma}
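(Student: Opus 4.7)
The plan is to reduce to the trivial case by Noetherian induction on $Y$, combined with the additivity property of $\mathcal{H}$ (Theorem \ref{Theorem2.2}) and the Künneth formula for compactly supported cohomology with its mixed Hodge structure.

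First I would handle the case of a product: if $X = Y \times F$, I need to know that the mixed Hodge structure on $H_c^k(Y \times F)$ splits as $\bigoplus_{i+j=k} H_c^i(Y) \otimes H_c^j(F)$ as mixed Hodge structures. This is the Künneth isomorphism in Deligne's theory, and it is compatible with both the Hodge and weight filtrations, so the Hodge numbers multiply in the expected way. Taking the alternating sum over $p,q,k$ weighted by $u^p v^q$ then gives $\mathcal{H}(Y \times F)(u,v) = \mathcal{H}(Y)(u,v)\cdot \mathcal{H}(F)(u,v)$. This is the key analytic input.

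Next, by Zariski local triviality, pick an open $U \subset Y$ such that $\pi^{-1}(U) \cong U \times F$, and let $Z = Y \setminus U$ with its reduced induced structure, so $Z$ is a closed subvariety of strictly smaller dimension. Then $\pi^{-1}(Z) \to Z$ is again a Zariski-locally trivial fiber bundle with fiber $F$ (restricting any trivializing cover of $Y$ to $Z$). By Theorem \ref{Theorem2.2} applied to the disjoint decompositions $Y = U \sqcup Z$ and $X = \pi^{-1}(U) \sqcup \pi^{-1}(Z)$,
\begin{align*}
\mathcal{H}(Y)(u,v) &= \mathcal{H}(U)(u,v) + \mathcal{H}(Z)(u,v),\\
\mathcal{H}(X)(u,v) &= \mathcal{H}(\pi^{-1}(U))(u,v) + \mathcal{H}(\pi^{-1}(Z))(u,v).
\end{align*}

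Now I would run Noetherian induction on $\dim Y$ (and, within a fixed dimension, on the number of top-dimensional components). The base case $\dim Y = 0$ is covered by the product case applied componentwise. For the inductive step, the product case gives $\mathcal{H}(\pi^{-1}(U)) = \mathcal{H}(U)\cdot \mathcal{H}(F)$, while the induction hypothesis applied to $\pi^{-1}(Z) \to Z$ yields $\mathcal{H}(\pi^{-1}(Z)) = \mathcal{H}(Z)\cdot \mathcal{H}(F)$. Substituting into the second additivity relation and factoring $\mathcal{H}(F)$ out of the right-hand side gives $\mathcal{H}(X) = (\mathcal{H}(U)+\mathcal{H}(Z))\cdot \mathcal{H}(F) = \mathcal{H}(Y)\cdot \mathcal{H}(F)$, which closes the induction.

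The main obstacle is the Künneth step for the product: one must verify that the isomorphism $H_c^*(U\times F) \cong H_c^*(U)\otimes H_c^*(F)$ is an isomorphism of mixed Hodge structures, so that $h^{p,q}$ of the tensor product is the convolution $\sum h^{p',q'}(H_c^i(U))\, h^{p-p',q-q'}(H_c^j(F))$. Once this compatibility is in hand (which is a standard consequence of Deligne's construction of the mixed Hodge structure via hypercoverings and the multiplicativity of the weight and Hodge filtrations under external products), the remainder of the argument is purely formal bookkeeping using Theorem \ref{Theorem2.2} and dimension induction.
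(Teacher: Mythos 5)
Your proof is correct and is essentially the argument of the cited source: the paper itself states this lemma without proof, quoting it from \cite{MOV1}, where it is established by exactly this combination of additivity of $\mathcal{H}$ over a decomposition of the base into trivializing pieces (Theorem \ref{Theorem2.2}) together with the K\"unneth isomorphism for compactly supported cohomology, which is compatible with Deligne's mixed Hodge structures and hence makes $\mathcal{H}$ multiplicative on products. Your Noetherian induction (including the care taken with reducible $Y$ via the count of top-dimensional components, and the observation that restricting a trivializing cover of $Y$ to $Z$ keeps $\pi^{-1}(Z)\rightarrow Z$ Zariski-locally trivial) is simply an explicit way of producing that stratification, so nothing further is needed.
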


If $X$ is an algebraic variety acted on by a group $G$, let $BG$ be the classifying space of $G$ and
$EG \rightarrow BG$ the classifying bundle. We form the space $X\times_G EG$ which
is defined to be the quotient space of $X\times EG$ by the free action of $G$ on $X\times EG$ given by $g\cdot (x,e)=(g\cdot x, e\cdot g^{-1})$. Then, the
\emph{equivariant cohomology ring} of $X$ is 
$$H^{\ast}_G(X)=H^{\ast}(X\times_G EG).$$ Although $EG$ and $BG$
are not finite-dimensional manifolds, there are natural Hodge
structures on their cohomology. This is trivial in the case of
$EG$. Deligne proved that there is a pure Hodge structure on
$H^{\ast}(BG)$ and that $H^{p,q}(H^{\ast}(BG))=0$ for $p\neq q$
(see \cite[Theorem 9.1.1]{D4}). We may regard $EG$ and $BG$ as increasing
unions of finite-dimensional varieties $(EG)_m$ and $(BG)_m$ for
$m\geq 1$ such that $G$ acts freely on $(EG)_m$ with
$(EG)_m/G=(BG)_m$ and the inclusions of $(EG)_m$ and $(BG)_m$ in
$EG$ and $BG$ respectively induce isomorphisms of cohomology in
degree less than $m$ which preserve the Hodge structures. In the
same way $X\times_G EG$ is the union of finite-dimensional
varieties whose natural mixed Hodge structures induce a natural
mixed Hodge structure on $H^n (X\times_G EG)$. Using that we have
the following

\begin{definition}\textnormal{For any complex algebraic variety $X$ acted on by an algebraic group $G$,
we define its \emph{equivariant Hodge--Poincar\'{e} polynomial} as
$$HP_G(X)(u,v)=\sum_{p,q,k}(-1)^{p+q+k}h_G^{p,q;k}(X)u^p v^q ,$$where 
$h^{p,q;n}_G(X)=h^{p,q}(H^n (X\times_G
EG)).$}
\end{definition}

\begin{parrafo}\label{parraequicoho}\textnormal{Suppose now that $G$ is connected. The relationship
between cohomology and equivariant cohomology is accounted for by
a Leray spectral sequence for the
fibration\begin{equation}\label{FEC}X\times_G EG \rightarrow BG
\end{equation}whose fibre is $X$. The $E_2$-term of this spectral sequence is given
by $E_2^{p,q}=H^p(X)\otimes H^q(BG)$ which abuts to
$H^{p+q}_G(X)$. This spectral sequence preserves Hodge structures.}

\textnormal{If $X$ is a nonsingular projective variety that is acted on linearly by a connected complex reductive group $G$, one has that the fibration
(\ref{FEC}) is cohomologically trivial over $\mathbb{Q}$ (see
\cite[Proposition 5.8]{K1}). Then
\begin{equation}\label{cohoeq}H^{\ast}_G(X)\cong H^{\ast}(X)\otimes
H^{\ast}(BG).\end{equation}This isomorphism is actually an
isomorphism of mixed Hodge structures (\cite[Proposition 8.2.10]{D4}).}

\textnormal{We have another fibration, that is $$X\times_G EG
\rightarrow X/G$$with fibre $EG$. If $G$ acts freely on $X$,
then it induces
the isomorphism\begin{equation}\label{quotient}H^{\ast}(X\times_G
EG)\cong H^{\ast}(X/G).\end{equation}}\end{parrafo}

\begin{remark}\textnormal{Let
$GL(N)$ and $SL(N)$ be the general linear group and the special
linear group respectively. In this paper we only consider these
groups with complex coefficients. The Hodge--Poincar\'{e} series of
$BGL(N)$ and $BSL(N)$ are given by 
\begin{equation}\label{BSL}\textnormal{$HP(BGL(N))(u,v)=\prod_{1\leq k\leq
N}(1-u^k v^k)^{-1}$ $ $ $ $ and $ $ $ $ $HP(BSL(N))(u,v)=\prod_{2\leq k\leq
N}(1-u^k v^k)^{-1}$.}\end{equation}}
\end{remark}
\section{Stratifications}\label{Stratifications}

Let $X$ be a nonsingular complex
projective variety in $\mathbb{P}^n$ and $G$ a reductive group
that acts linearly on $X$. Assume that $X$ is embedded in
$\mathbb{P}^n$ by a line bundle $L$ which is the restriction of
the hyperplane bundle $H$ on $\mathbb{P}^n$ to $X$. We denote the set of semistable points by $X^{ss}$ and the good quotient or GIT quotient by $X^{ss}/G=X/\!/G$ together with the quotient map $X^{ss}\rightarrow X/\!/G$. Regarding the set of stable points, throughout this paper it will be convenient to use Mumford's original definition of properly stable points, nowadays called stable.  A point $x\in X$ is properly stable if $\dim \mathcal{O}(x)=\dim G$
and there exists an invariant homogeneous polynomial $f$ of degree
$\geqslant 1$ such that $f(x)\neq 0$ and the action of $G$ on
$X_f$ is closed. We denote by $X^s_{(0)}$ the set of properly stable points of $X$ and by $X^s_{(0)}/G$ the geometric quotient of $X$ by $G$. We also denote by $X^s_{(i)}$ the set of points of $X$ that satisfy the same properties as the properly stable points but with $\dim \mathcal{O}(x)=\dim G-i$.

In this section we review the definitions and explicit descriptions of a couple of stratifications introduced by Kirwan in \cite{K1} and \cite{K6} respectively. We talked about the first one in Paragraph \ref{Introduccion stratification}.
This requires the hypothesis that $X^{ss}=X_{(0)}^{s}$. This
stratification turns out to be equivariantly perfect, which
implies that one may obtain inductive formulae for computing the
Betti numbers and Poincar\'e polynomials of $X/\!/G$ when $G$ is
connected.

For the latter, we stratify the set $X^{ss}$ in such a way that
the set of properly stable points $X_{(0)}^s$ is an open stratum.
Unfortunately, this stratification is not equivariantly perfect,
so one does not expect to obtain nice formulae for computing the
Betti numbers and Poincar\'e polynomials of $X_{(0)}^s/G$ when $G$
is connected. But this stratification may be used to compute the
Hodge--Poincar\'e polynomials of the properly stable part as we
will do later on in this paper.


\subsection{The general
construction}\label{Stratificationgeneral}When $X^{ss}=X_{(0)}^s$
there exists a stratification $\{ \mathcal{S}_{\beta}:\beta \in
\mathcal{B}\}$ of $X$ by nonsingular $G$-invariant locally closed
subvarieties $\mathcal{S}_{\beta}$ satisfying properties (i), (ii)
and (iii) of Paragraph \ref{Introduccion stratification}. This
stratification can be defined either using symplectic geometry and
the moment map, or algebraically.

When $X$ is a nonsingular complex projective variety in
$\mathbb{P}^n$ and $G$ a reductive group that acts linearly on
$X$, this stratification is defined as follows (see \cite{K1} for
details). Assume that $G$ acts on $X$ via a rational
representation $\rho :G\rightarrow GL(n+1)$. It is well known that one can choose coordinates so that
$\rho$ restricts to an unitary representation $\rho_K
:K\rightarrow U(n+1)$ where $K$ is a maximal compact subgroup of $G$ whose complexification is $G$. Let $T$ be a maximal compact torus of $K$,
and let $\mathfrak{t}$ be its Lie algebra. The maximal torus $T$
acts on $X$ via a morphism $T\rightarrow U(n+1)$; after
conjugating this morphism by an element of $U(n+1)$, we may assume
that $T$ acts via $$t\mapsto \diag (\alpha_0(t) ,\ldots
,\alpha_n(t)),$$where $\alpha_j$ are characters of $T$. We
choose an inner product on $\mathfrak{t}$, invariant under the
action of the Weyl group, and use this to identify $\mathfrak{t}$
and its dual, $\mathfrak{t}^{\ast}$. Under this inner product, we
identify the characters $\alpha_j$ with points in
$\mathfrak{t}^{\ast}$, these are the weights of $T$. By abuse of
notation, we denote the weights by $\alpha_j$. Let $\mathcal{W}:=
\{ \alpha_0 ,\ldots \alpha_n \}$ be the set of weights for the action
of $T$. Then, the indexing set $\mathcal{B}$ is defined as
follows. An element $\beta \in \mathfrak{t}^{\ast}$ belongs to $ \mathcal{B}$ if and only if $\beta$ is the closest point to 0 of the convex hull, $\Conv S$, of some nonempty subset $S$ of $\mathcal{W}$. Then, if $\beta \in \mathcal{B}$, $\beta$
is the closest point
to 0 of the convex hull$$\Conv \{ \alpha_i \in
\mathcal{W}\textnormal{$ $ such that $ $} \alpha_i . \beta =\|
\beta \|^2\},$$where $.$ is the inner product and $\| \cdot \|$
its associated norm. If we choose a positive Weyl chamber of
$\mathfrak{t}^{\ast}$, the indexing set
$\mathcal{B}$ can be then identified with a finite set of points
in $\mathfrak{t}_+^{\ast}$.

Regarding the varieties $Z_{\beta}$ of Paragraph \ref{Introduccion
stratification} (iii), we define
\begin{equation}\label{ZetaBeta}Z_{\beta} :=\{ (x_0 : \ldots :x_n)\in X
\textnormal{ $ $ such that $x_i=0$ if $\alpha_i . \beta \neq
\parallel \beta \parallel^2$}\}\end{equation}and\begin{equation}\label{YBeta}Y_{\beta} :=\{ (x_0:\ldots
:x_n)\in X \textnormal{ $ $ : $x_i=0$ if $\alpha_i . \beta <
\parallel \beta \parallel^2$ and $\exists$ $x_i\neq 0$ with
$\alpha_i . \beta = \parallel \beta
\parallel^2$}\}.\end{equation}The variety $Z_{\beta}$ is a proper closed
subvariety of $X$ and $Y_{\beta}$ is a locally closed subvariety.
There is a retraction $p_{\beta}:Y_{\beta}\rightarrow Z_{\beta}$
defined by \begin{equation}\label{pbeta}p_{\beta}(x_0:\ldots
:x_n)=(x_0':\ldots :x_n')\end{equation}such that $x_i'=x_i$ if
$\alpha_i . \beta =\| \beta \|^2$ and $x_i'=0$
otherwise. Moreover, the subvarieties $Z_{\beta}$ and $Y_{\beta}$
are nonsingular and $p_{\beta}$ is a locally trivial fibration
whose fibre at any point is isomorphic to $\mathbb{C}^{m_{\beta}}$
for some $m_{\beta}\geqslant 0$ (see \cite[Corollary 13.2]{K1}).

Let $\Stab \beta$ be the stabiliser of $\beta$ under the adjoint
action of $G$. Let $Z_{\beta}^{ss}$ be the set of semistable
points of $Z_{\beta}$ with respect to the action of $\Stab \beta$
properly linearised (for details see \cite{K1}). Let
$Y_{\beta}^{ss}:=p_{\beta}^{-1}(Z_{\beta}^{ss})$. Then, the
restriction of $p_{\beta}$ to $Y_{\beta}^{ss}$
$$p_{\beta}:Y_{\beta}^{ss}\rightarrow Z_{\beta}^{ss}$$is a
locally trivial fibration whose fibre is $\mathbb{C}^{m_{\beta}}$
for some $m_{\beta}\geqslant 0$. Let $B$ be the Borel subgroup of
$G$ associated to the choice of positive Weyl chamber
$\mathfrak{t}_+$ and let $P_{\beta}\subseteq G$ consist of all $g \in G$ such that 
$\underset{t\rightarrow
\infty}{\lim} \exp (-it\beta)g\exp (it\beta)$ is an element of
$G$. In Lemma 6.9 of \cite{K1} it is proved that $P_{\beta}$ is the parabolic subgroup
$B\Stab \beta$. Moreover, $Y_{\beta}$
and $Y_{\beta}^{ss}$ are $P_{\beta}$-invariant and
\begin{equation}\label{parabolico}\mathcal{S}_{\beta}\cong
G\times_{P_\beta}Y_{\beta}^{ss}\end{equation}(see \cite{K1}). The limit in the definition of $P_{\beta}$ defines a surjection $q_{\beta}:P_{\beta}
\rightarrow \Stab \beta$ which is actually a retraction. From (\ref{parabolico}) and the
fact that $p_{\beta}$ and $q_{\beta}$ are retractions, one deduces
that $$H_G^{\ast}(\mathcal{S}_{\beta})\cong H_{\Stab
\beta}^{\ast}(Z_{\beta}^{ss})$$which is property (iii) of
Paragraph \ref{Introduccion stratification}. Moreover, the
stratification $\{ \mathcal{S}_{\beta} \}_{\beta \in \mathcal{B}}$
satisfies that $\mathcal{S}_0=X^{ss}$ and is equivariantly perfect
(see properties (i) and (ii) of Paragraph \ref{Introduccion
stratification}). One has that $X=\bigcup_{\beta \in
\mathcal{B}}\mathcal{S}_{\beta }$, and there is a partial order on
$\mathcal{B}$ such that\begin{equation}\label{partial
order}\overline{S_{\beta }}\subseteq S_{\beta }\cup
\bigcup_{\gamma
> \beta}S_{\gamma},\end{equation}where $\gamma
> \beta$ if $\| \gamma \|> \| \beta \|$.

\subsection{A stratification for the set of semistable points}\label{StratificationSection2}
When $X^{ss}\neq X^{s}_{(0)}\neq \emptyset$ the GIT quotient
$X/\!/G$ may have serious singularities. For the moduli space of
semistable vector bundles, $\mathcal{M}(n,d)$, the problem of
finding natural desingularisations has been studied by Seshadri
\cite{S}, Narasimhan and Ramanan \cite{NR}, and Kirwan
\cite{K4}. The latter is an application of the method described in
\cite{K2} that works for a smooth complex projective variety $X$
acted on by a reductive group $G$ and such that $X^{ss}\neq
X^{s}_{(0)}\neq \emptyset$. From her method, Kirwan defines a refined stratification of $X^{ss}$ such that
$X^{s}_{(0)}$ is an open stratum (see \cite{K6}). This method consists of blowing
up $X$ along a sequence of smooth $G$-invariant subvarieties to
obtain a $G$-invariant morphism $\pi :
\widetilde{X}^{ss}\rightarrow X^{ss}$, where $ \widetilde{X}^{ss}
$ is a projective variety acted on linearly by $G$ properly
lifted, and such that $\widetilde{X}^{ss}=\widetilde{X}^{s}_{(0)}$
with respect to the induced action. The induced birational
morphism $\widetilde{X}/\!/G \rightarrow X/\!/G$ can be regarded as a
``partial desingularisation'' of the GIT quotient $X/\!/G$ in the
sense that the more serious singularities of $X/\!/G$ have been
resolved. The only singularities of $\widetilde{X}/\!/G$ are finite
quotient singularities (for more details see \cite{K2}).

\begin{parrafo}\textnormal{This desingularisation process is based on the fact that
there exist semistable points that are not properly stable if and
only if there exists a non-trivial connected reductive subgroup
of $G$ fixing a semistable point. Let $r_1>0$ be the maximal
dimension of a reductive subgroup of $G$ fixing a point of
$X^{ss}$ and let $\mathcal{R}(r_1)$ be a set of representatives of
conjugacy classes of all connected reductive subgroups $R_1$ of
dimension $r_1$ in $G$ such that $$Z_{R_1}^{ss}:=\{ x\in X^{ss}
\textnormal{$ $ $ $ such that $ $ $R_1$ fixes $x$} \}$$is
non-empty. We consider $$\cup_{R_1 \in
\mathcal{R}(r_1)}GZ_{R_1}^{ss}$$where $GZ_{R_1}^{ss}:= \{ gx$ such
that $g\in G$ and $x\in Z_{R_1}^{ss}\}$. These sets are
non-singular closed subvarieties of $X^{ss}$. In the first step
one blows up $X^{ss}$ along the subvariety $\cup_{R_1 \in
\mathcal{R}(r_1)}GZ_{R_1}^{ss}$. In \cite[Corollary 8.3]{K2} it is proved
that the blow-up of $X^{ss}$ along the subvariety $\cup_{R_1 \in
\mathcal{R}(r_1)}GZ_{R_1}^{ss}$ is the same as the result of
blowing up $X^{ss}$ along each $GZ_{R_1}^{ss}$ for $R_1 \in
\mathcal{R}(r_1)$ one at a time. Let $X_{(R_1)}$ be the blown-up
variety along $\cup_{R_1 \in \mathcal{R}(r_1)}GZ_{R_1}^{ss}$ and
$E_1$ be the exceptional divisor. The action of $G$ on $X^{ss}$
lifts to an action on $X_{(R_1)}$ with respect to
$\pi_1^{\ast}L^{\otimes k}\otimes \mathcal{O}(-E_1)$ where $\pi_1
:X_{(R_1)}\rightarrow X^{ss}$ and $k$ is any integer. When $k$ is
large enough the set $X_{(R_1)}^{ss}$ with respect to the lifted
action is independent of $k$.}\end{parrafo}

\begin{parrafo}\label{Par de condiciones}\textnormal{In \cite[Remark 7.17]{K2},
the set $X_{(R_1)}^{ss}$ is characterized by the following
property:\begin{itemize}\item[(a)] The complement of
$X_{(R_1)}^{ss}$ in $X_{(R_1)}$, i.e. $X_{(R_1)}\backslash
X_{(R_1)}^{ss}$, is the proper transform of the subset
$\phi^{-1}(\phi (GZ_{R_1}^{ss}))$ of $X^{ss}$ where $\phi :X^{ss}
\rightarrow X/\!/G$ is the quotient map.\end{itemize}It is also satisfied that:\begin{itemize}\item[(b)] No point of
$X_{(R_1)}^{ss}$ is fixed by a reductive subgroup of $G$ of
dimension at least $r_1$, and a point in
$E_1^{ss}=X_{(R_1)}^{ss}\cap E_1$ is fixed by a reductive subgroup $R'$
of dimension $r' < r_1$ in $G$ if and only if it belongs to
the proper transform of the subvariety $Z_{R'}^{ss}$ in
$X^{ss}$. We denote the proper transform of $Z_{R'}^{ss}$ of
$X^{ss}$ by $\widehat{Z}_{R'}^{ss}$.\end{itemize}}\end{parrafo}

If one repeats this process for $X_{(R_1)}^{ss}$ and so on, after at
most $r_1-1$ steps one obtains a $G$-invariant morphism $\pi :
\widetilde{X}^{ss}\rightarrow X^{ss}$, where $ \widetilde{X}^{ss}
$ is a projective variety acted on linearly by $G$ properly
lifted, and such that
$\widetilde{X}^{ss}=\widetilde{X}^{s}_{(0)}$. This is equivalent
to constructing a sequence of varieties
$$X^{ss}_{(R_0)}=X^{ss}, X^{ss}_{(R_1)}, \ldots ,
X^{ss}_{(R_{\tau})}=\widetilde{X}^{ss}$$where $R_1$, $\ldots$,
$R_{\tau}$ are connected reductive subgroups of $G$ with
$$r_1=\dim R_1 \geqslant \dim R_2 \geqslant \ldots \geqslant \dim
R_{\tau} \geqslant 1,$$and if $1\leqslant l \leqslant \tau $ then
$X_{(R_l)}$ is the blow-up of $X^{ss}_{(R_{l-1})}$ along its
closed nonsingular subvarieties $G\widehat{Z}^{ss}_{R_l}$. We have $G\widehat{Z}^{ss}_{R_l} \cong G\times_{N_l}\widehat{Z}^{ss}_{R_l}$, where $N_l$
is the normaliser of $R_l$ in $G$. Similarly,
$\widetilde{X}/\!/G=\widetilde{X}^{ss}/G$ can be obtained from
${X}/\!/G$ by blowing up along the proper transforms of the subvarieties $\phi (GZ^{ss}_R)=Z_R/\!/(N/R)$ of $X/\!/G$ in decreasing order of $\dim R$, where $\phi :X^{ss}
\rightarrow X/\!/G$. Note that
\begin{equation}\label{GIT para las variedades}GZ^{ss}_R/G\cong
Z_R/\!/(N/R).\end{equation}

As in Subsection \ref{Stratificationgeneral}, for each $1\leqslant
l \leqslant \tau $ one has a $G$-equivariant stratification$$\{
\mathcal{S}_{\beta ,l } :(\beta ,l)\in \mathcal{B}_l\times \{ l \}
\}$$of $X_{R_l}$ by nonsingular $G$-invariant locally closed
subvarieties such that one of the strata, indexed by $(0,l)\in
\mathcal{B}_l \times \{ l \}$, coincides with the open subset
$X^{ss}_{(R_l)}$ of $X_{(R_l)}$. There is a partial ordering in
$\mathcal{B}_l$ given by $\gamma > \beta $ if
$\parallel\gamma\parallel >
\parallel \beta \parallel$. Then $0$ is its minimal element, and
if $\beta \in \mathcal{B}_l$ then the closure in $X_l$ of the
stratum $\mathcal{S}_{\beta ,l}$
satisfies$$\overline{\mathcal{S}_{\beta ,l}}\subseteq
\bigcup_{\gamma \in \mathcal{B}_l , \gamma \geqslant
\beta}\mathcal{S}_{\gamma ,l}.$$If $\beta \in \mathcal{B}_l$ and
$\beta \neq 0$ then the stratum $\mathcal{S}_{\beta ,l}$ retracts
$G$-equivariantly onto its transverse intersection with the
exceptional divisor $E_l$ for the blow-up $X_{(R_l)}\rightarrow
X^{ss}_{(R_{l-1})}$. This exceptional divisor is isomorphic to the
projective bundle $\mathbb{P}(\mathcal{N}_l)$ over
$G\widehat{Z}^{ss}_{R_l}$, where $\mathcal{N}_l$ is the normal bundle to $G\widehat{Z}^{ss}_{R_l}$
in $X^{ss}_{(R_{l-1})}$.

\begin{parrafo}\label{stratification fibra}\textnormal{Now, let
$$\pi_l :E_l\rightarrow
G\widehat{Z}^{ss}_{R_l}$$be the projection obtained from the
restriction of $\pi_l :X_{(R_l)}\rightarrow X_{(R_{l-1})}^{ss}$ to
$E_l$. This restriction is a locally trivial
fibration whose fibre is isomorphic to
$\mathbb{P}(\mathcal{N}_l)$. It is proved in \cite{K2} that the
stratification $\{ \mathcal{S}_{\beta ,l }:\beta \in \mathcal{B}_l
\}$ is determined by the action of $R_l$ on the fibres of
$\mathcal{N}_l$ over $G\widehat{Z}^{ss}_{R_l}$. More precisely, in
Lemma 7.9 of \cite{K2} it is shown that when $x\in
\widehat{Z}^{ss}_{R_l}$ the intersection of $\mathcal{S}_{\beta
,l}$ with the fibre $\pi_l^{-1}(x) =\mathbb{P}(\mathcal{N}_{l,x})$
of $\pi_l$ at $x$ is the union of those strata indexed by points
in the adjoint orbit Ad$(G)\beta$ in the stratification of
$\mathbb{P}(\mathcal{N}_{l,x})$ induced by the representation
$\rho_l$ of $R_l$ on the normal $\mathcal{N}_{l,x}$ to
$G\widehat{Z}^{ss}_{R_l}$ at $x$. Let $\mathcal{B}(\rho_l)$ be the
indexing set corresponding to $\rho_l$; if each Ad$(G)$-orbit
meets $\mathcal{B}(\rho_l)$ in at most one point then one may
assume that
$$\mathcal{B}_l=\mathcal{B}(\rho_l).$$Moreover, each stratum
$\mathcal{S}_{\beta ,l}$ retracts onto its intersection with the
exceptional divisor $E_l$, and if
$\mathcal{B}_l=\mathcal{B}(\rho_l)$ then this intersection
retracts onto \begin{equation*}G\times_{N_l \cap \Stab
\beta}Z_{\beta ,l}^{ss}\cap
\pi_l^{-1}(\widehat{Z}_{R_l}^{ss}),\end{equation*}where
\begin{equation}\label{Fibration restrition}\pi_l : Z_{\beta ,l}^{ss}\cap
\pi_l^{-1}(\widehat{Z}_{R_l}^{ss})\rightarrow
\widehat{Z}_{R_l}^{ss},\end{equation}is a fibration with fibre
${Z}_{\beta ,l}^{ss}(\rho_l)$. The variety
${Z}_{\beta ,l}^{ss}(\rho_l)$ is defined as ${Z}_{\beta }^{ss}$
but with respect to the induced action by $\rho_l$ of $R_l$ on
$\mathbb{P}(\mathcal{N}_l)$. If $\beta$ is a maximal element of
$\mathcal{B}(\rho_l)$ with respect to the partial order, then
${Z}_{\beta ,l}^{ss}(\rho_l)={Z}_{\beta ,l}(\rho_l)$ which is a
projective space. By \cite[Lemma 7.11]{K2} and
\cite[Theorem 6.18]{K1}, the codimension of $\mathcal{S}_{\beta ,l}$ in
$X_{(R_l)}$ is given by\begin{equation}\label{codimension de los
estratos normales}d(\beta ,l):=\codim \mathcal{S}_{\beta
,l}=n(\beta ,l)-\dim R_l /B\Stab \beta,\end{equation}where
$n(\beta ,l)$ is the number of weights $\alpha$ of the
representation $\rho_l$ such that $\alpha . \beta < \parallel
\beta
\parallel^2$.}
\end{parrafo}

Then, there is a stratification $\{ \Sigma_{\gamma} \}_{\gamma}$
of $X^{ss}$ (see \cite{K6}) indexed by
\begin{equation}\label{gamma}\Gamma =\{ R_1 \} \sqcup \{ R_1 \}\times
\{ \mathcal{B}_1 \backslash \{ 0 \} \} \sqcup \ldots \sqcup \{
R_{\tau} \} \sqcup \{ R_{\tau} \}\times \{ \mathcal{B}_{\tau}
\backslash \{ 0 \} \} \sqcup \{ 0 \} \end{equation}defined as
follows. One takes as the highest stratum $\Sigma_{R_1}$ the
nonsingular closed subvariety $GZ^{ss}_{R_1}$ whose complement in
$X^{ss}$ can be naturally identified with $X_{(R_1)}\backslash
E_1$. One has $GZ^{ss}_{R_1} \cong G\times_{N_1}Z^{ss}_{R_1}$
where $N_1$ is the normaliser of $R_1$ in $G$, and $Z^{ss}_{R_1}$
is equal to the set of semi-stable points for the action of
$N_1$, or equivalently for the induced action of $N_1/R_1$, on
$Z_{R_1}$, which is a union of connected components of the fixed
point set of $R_1$ in $X$. Moreover since $R_1$ has maximal
dimension among those reductive groups of $G$ with fixed points in
$X^{ss}$, we have $Z^{ss}_{R_1}=Z^{s}_{R_1}$ where $Z^s_{R_l}$
denotes, for each $l$, the set of properly stable points for the
action of $N_l/R_l$ on $Z_{R_l}$ for $1\leqslant l\leqslant \tau$.

\begin{remark}\textnormal{Note that $x\in Z_{R_1}$ is properly
stable for the action of $N_1/R_1$ if and only if $x\in
(Z_{R_1})^s_{(r_1)}$ for the action of $N_1$ on $Z_{R_1}$ where
$r_1=\dim R_1$. Since $GZ^{ss}_{R_1}\cong
G\times_{N_1}Z^{ss}_{R_1}$ and is closed in $X^{ss}$, one also has
that $x\in Z_{R_1}$ is properly stable for the action of $N_1/R_1$
if and only if $x\in (\overline{GZ_{R_1}})^s_{(r_1)}$ for the
action of $G$ on $\overline{GZ_{R_1}}$.}\end{remark}

We take as our next strata the nonsingular locally closed
subvarieties $$\Sigma_{\beta ,1}=\mathcal{S}_{\beta ,1} \backslash
E_1,$$for $\beta \in \mathcal{B}_1$ with $\beta \neq 0$, of
$X_{(R_1)}\backslash E_1=X^{ss}\backslash GZ^{ss}_{R_1}$, whose
complement in $X_{(R_1)}\backslash E_1 $ is just
$X^{ss}_{(R_1)}\backslash E_1 =X^{ss}_{(R_1)}\backslash E^{ss}_1$
where $E^{ss}_1 =X^{ss}_{(R_1)}\cap E_1$, and then we take the intersection of $X^{ss}_{(R_1)}\setminus E_1^{ss}\subseteq X^{ss}\setminus GZ_{R_1}^{ss}$ with $GZ^{ss}_{R_2}\subseteq X^{ss}$. 

\begin{remark}\textnormal{In \cite{K5} and \cite{K6} it is claimed that this intersection is $GZ_{R_2}^s$ where $Z^s_{R_2}$ is the set of properly stable points for the action of $N_2/R_2$ on $Z_{R_2}$. This is not necessarily true; in \cite{K7} it is explained that instead one needs to consider $Z^s_{R_2}$ to be the
intersection of $Z_{R_2}$ with $(\overline{GZ_{R_2}})^s_{(r_2)}$
defined for the action of $G$ on $\overline{GZ_{R_2}}$ where $r_2
=\dim R_2$. This is an open subset of the set of properly stable
points for the action of $N_2/R_2$ on $Z_{R_2}$, which is the
intersection of $Z_{R_2}$ with $(\overline{GZ_{R_2}})^s_{(r_2)}$
defined for the action of $N_2$ (not $G$) on $\overline{GZ_{R_2}}$.}\end{remark}

The next strata are the nonsingular locally closed subvarieties
$$\Sigma_{\beta ,2}=\mathcal{S}_{\beta ,2} \backslash
(E_2 \cup \widehat{E}_1),$$for $\beta \in \mathcal{B}_2$ with
$\beta \neq 0$, of $X_{(R_2)}\backslash (E_2 \cup \widehat{E}_1)$,
whose complement in $X_{(R_1)}\backslash (E_2 \cup \widehat{E}_1)
$ is $X^{ss}_{(R_1)}\backslash (E_2 \cup \widehat{E}_1)$. The
stratum after these is $GZ^s_{R_3}$, where $Z^s_{R_3}$ is the
intersection of $Z_{R_3}$ with $(\overline{GZ_{R_3}})^s_{(r_3)}$
defined for the action of $G$ on $\overline{GZ_{R_3}}$ where $r_3
=\dim R_3$.

Then, in general one has two different types of strata. For each
$1\leqslant l\leqslant \tau$,
either$$\Sigma_{R_l}=GZ^s_{R_l},$$where $Z^s_{R_l}$ is the
intersection of $Z_{R_l}$ with $(\overline{GZ_{R_l}})^s_{(r_l)}$
defined for the action of $G$ on $\overline{GZ_{R_l}}$ where $r_l
=\dim R_l$, or$$\Sigma_{\beta ,l}=\mathcal{S}_{\beta ,l}
\backslash (E_l \cup \widehat{E}_{l-1}\cup \ldots \cup
\widehat{E}_1),$$for $\beta \in \mathcal{B}_l$ with $\beta \neq
0$, of $X_{(R_l)}\backslash (E_l \cup \widehat{E}_{l-1}\cup \ldots
\cup \widehat{E}_1)$, whose complement in $X_{(R_l)}\backslash
(E_l \cup \widehat{E}_{l-1}\cup \ldots \cup \widehat{E}_1) $ is
$X^{ss}_{(R_l)}\backslash (E_l \cup \widehat{E}_{l-1}\cup \ldots
\cup \widehat{E}_1)$. We take $\Sigma_0 =X^{s}_{(0)}$ as our final
stratum, which is the unique one indexed by $0$. This is actually
the unique open stratum for the stratification.

One has a partial order induced on $\Gamma$, given by the
partial orders of $\mathcal{B}_i$ for all $i$ together with the
ordering in the expression (\ref{gamma}). For this partial
ordering, the maximal element is $R_1$ and the minimal element is
$0$, and the closure of each stratum $\Sigma_{\gamma}$ indexed by
$\gamma \in \Gamma$ in $X^{ss}$,
satisfies$$\overline{\Sigma_{\gamma}}\subseteq
\bigcup_{\tilde{\gamma}\geqslant \gamma}
\Sigma_{\tilde{\gamma}}.$$


\subsubsection{Explicit description}


In this subsection we give an explicit description of the previous strata
(see \cite{K6} for details). There are two different
types of strata. For each $1\leqslant l\leqslant \tau$,
either$$\Sigma_{R_l}=GZ^s_{R_l},$$in this case $GZ^s_{R_l}\cong
G\times_{N_l}Z^s_{R_l}$, or$$\Sigma_{\beta ,l}=\mathcal{S}_{\beta
,l} \backslash (E_l \cup \widehat{E}_{l-1}\cup \ldots \cup
\widehat{E}_1),$$for $\beta \in \mathcal{B}_l$ with $\beta \neq
0$. For the latter, recall from Subsection
\ref{Stratificationgeneral} that
$\mathcal{S}_{\beta ,l} =GY^{ss}_{\beta ,l} \cong
G\times_{P_{\beta}}Y^{ss}_{\beta ,l}$ where $Y^{ss}_{\beta ,l}$
fibres over $Z^{ss}_{\beta ,l}$ with fibre $\mathbb{C}^{m_{\beta
,l}}$ for some $m_{\beta ,l}>0$, and $P_{\beta}$ is the parabolic
subgroup BStab$(\beta)$ of $G$. By Lemmas 7.6 and 7.11 of
\cite{K2} $$\mathcal{S}_{\beta ,l} \cap E_l = G(Y^{ss}_{\beta
,l}\cap E_l) \cong G \times_{P_{\beta }}(Y^{ss}_{\beta ,l}\cap
E_l)$$where $Y^{ss}_{\beta ,l}\cap E_l$ fibres over $Z^{ss}_{\beta
,l}$ with fibre $\mathbb{C}^{m_{\beta ,l}-1}$. Thus
$$\mathcal{S}_{\beta ,l} \backslash E_l
\cong G \times_{P_{\beta }}(Y^{ss}_{\beta ,l}\backslash
E_l)$$where $Y^{ss}_{\beta ,l}\backslash E_l$ fibres over
$Z^{ss}_{\beta ,l}$ with fibre $\mathbb{C}^{m_{\beta ,l}-1}\times
(\mathbb{C}\backslash \{ 0 \})$.

In \cite{K6} it is proved that one can replace the indexing set
$\mathcal{B}_l\backslash \{ 0 \}$, whose elements correspond to
the $G$-adjoint orbits Ad$(G)\beta$ of elements of the indexing
set for the stratification of $\mathbb{P}(\mathcal{N}_{l,x})$
induced by the representation $\rho_l$ (see Paragraph
\ref{stratification fibra}), by the set of $N_l$-adjoint orbits
Ad$(N_l)\beta$. If $q_{\beta}:P_{\beta}\rightarrow
\textnormal{Stab}(\beta)$ is the projection, then
\begin{equation}\label{15} \Sigma_{\beta ,l}=GY_{\beta
-l}^{\setminus E}=G\times_{P_{\beta}}Y_{\beta -l}^{\setminus
E}\end{equation}where
\begin{equation}\label{fibrationnose}Y_{\beta
-l}^{\setminus E}=Y_{\beta ,l}^{ss}\backslash (E_l \cup
\widehat{E}_{l-1}\cup \ldots \cup \widehat{E}_{1})\rightarrow
\Stab \beta \times_{N_l \cap \Stab \beta } (Z_{\beta ,l}^{ss}\cap
\pi_l^{-1}(Z_{R_l}^s))\end{equation}is a fibration with fibre
$\mathbb{C}^{m_{\beta ,l}-1}\times (\mathbb{C} \setminus \{ 0 \})$,
and\begin{equation}\label{fibrationnose2}Z_{\beta ,l}^{ss}\cap
\pi_l^{-1}(Z_{R_l}^s)\rightarrow Z_{R_l}^s\end{equation}is a
fibration with fibre $Z_{\beta}^{ss}(\rho_l)$. We set $$Y_{\beta
,l}^{\backslash E}=Y_{\beta ,l}^{ss}\backslash (E_l \cup
\widehat{E}_{l-1}\cup \ldots \cup \widehat{E}_{1}) \cap
p_{\beta}^{-1}(Z_{\beta ,l}^{ss}\cap
\pi_{l}^{-1}({Z}_{R_l}^{s})).$$ It is proved in \cite{K6} that
$Y_{\beta ,l}^{ss} \cong P_{\beta} \times_{Q_{\beta,l}} Y_{\beta
-l}^{\setminus E}$, where $Q_{\beta ,l}=q_{\beta}^{-1}(N_l \cap
\Stab \beta )$. Then
\begin{equation}\label{stratum-nose} \Sigma_{\gamma}=\Sigma_{\beta
,l} \cong G\times_{Q_{\beta ,l}}Y_{\beta ,l}^{\backslash
E}\end{equation}where
\begin{equation}\label{eq1,1}p_{\beta}:Y_{\beta ,l}^{\backslash E}
\rightarrow Z_{\beta ,l}^{ss}\cap
\pi_{l}^{-1}({Z}_{R_l}^{s})\end{equation}is a fibration with fibre
$\mathbb{C}^{m_{\beta ,l}-1}\times (\mathbb{C}\backslash \{ 0 \}
)$ for a certain $m_{\beta ,l}>0$.


\section{Cohomological formulae}\label{Cohomological formulae}


Using the previous stratifications introduced by Kirwan, in this section we study the equivariant Hodge--Poincar\'e series
of the set of properly stable points of a complex projective
variety $X$ equipped with a linear action of a complex reductive
group $G$ and such that $X_{(0)}^{s}$ is nonempty and there are
semistable points in $X^{ss}$ that are not properly stable. When
$G$ is connected these formulae allow us to compute the
Hodge--Poincar\'e series of the geometric quotient
$X_{(0)}^{s}/G$.

To do that, we consider the stratification $\{ \Sigma_{\gamma}
\}_{\gamma}$ of $X^{ss}$ indexed by
$$\Gamma =\{ R_1 \} \sqcup \{ R_1 \}\times \{ \mathcal{B}_1
\backslash \{ 0 \} \} \sqcup \ldots \sqcup \{ R_{\tau} \} \sqcup
\{ R_{\tau} \}\times \{ \mathcal{B}_{\tau} \backslash \{ 0 \} \}
\sqcup \{ 0 \} ,$$where $\Sigma_0 =X_{(0)}^{s}$ is an open stratum,
of Subsection \ref{StratificationSection2}.
Since $X_{(0)}^{s}$ is an open set of $X^{ss}$, both have the same
dimension. We have the following identity for the equivariant
Hodge-Poincar\'{e} series of $X_{(0)}^{s}$.
\begin{proposition}\label{propositionidentity}With the previous notation, we have that
\begin{equation}HP_G(X_{(0)}^{s})(u,v)=HP_G(X^{ss})(u,v)-\sum_{\gamma
\in \Gamma \backslash \{ 0 \}}(uv)^{\lambda(\gamma)}HP_G(\Sigma_{\gamma})(u,v),
\end{equation}where $\lambda(\gamma)$ is the complex codimension
of $\Sigma_{\gamma}$ in $X^{ss}$.
\end{proposition}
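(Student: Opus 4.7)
The plan is to combine the additivity of Hodge--Deligne polynomials (Theorem \ref{Theorem2.2}) with the Poincar\'e duality identity (\ref{identityDP}), applied in the equivariant setting via the Borel construction. The key geometric input is that every stratum $\Sigma_\gamma$ described in Subsection \ref{StratificationSection2} is a smooth, locally closed, $G$-invariant subvariety of the smooth variety $X^{ss}$. Fix a smooth finite-dimensional algebraic approximation $(EG)_m$ of $EG$ on which $G$ acts freely, chosen so that for any $G$-variety $Y$ the mixed Hodge structure on $H^k(Y\times_G(EG)_m)$ agrees with that on $H_G^k(Y)$ for every $k<m$. Then $\Sigma_\gamma\times_G(EG)_m$ and $X^{ss}\times_G(EG)_m$ are smooth algebraic varieties of complex dimensions $\dim X-\lambda(\gamma)+\dim(BG)_m$ and $\dim X+\dim(BG)_m$, respectively.

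Because $\{\Sigma_\gamma\}_{\gamma\in\Gamma}$ is a finite stratification of $X^{ss}$ by locally closed subvarieties, $\{\Sigma_\gamma\times_G(EG)_m\}_{\gamma\in\Gamma}$ is a finite stratification of $X^{ss}\times_G(EG)_m$. Applying Theorem \ref{Theorem2.2} and then (\ref{identityDP}) to each smooth piece gives
\begin{equation*}
(uv)^{D}HP\bigl(X^{ss}\times_G(EG)_m\bigr)(u^{-1},v^{-1})=\sum_{\gamma\in\Gamma}(uv)^{D-\lambda(\gamma)}HP\bigl(\Sigma_\gamma\times_G(EG)_m\bigr)(u^{-1},v^{-1}),
\end{equation*}
with $D=\dim X+\dim(BG)_m$. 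Cancelling $(uv)^{D}$ and substituting $(u,v)\mapsto(u^{-1},v^{-1})$ produces
\begin{equation*}
HP\bigl(X^{ss}\times_G(EG)_m\bigr)(u,v)=\sum_{\gamma\in\Gamma}(uv)^{\lambda(\gamma)}HP\bigl(\Sigma_\gamma\times_G(EG)_m\bigr)(u,v).
\end{equation*}

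Fixing any cohomological degree and choosing $m$ sufficiently large, all finite-dimensional Hodge--Poincar\'e polynomials on both sides agree in that degree with the corresponding coefficients of the equivariant Hodge--Poincar\'e series, so passage to the formal-series limit yields
\begin{equation*}
HP_G(X^{ss})(u,v)=\sum_{\gamma\in\Gamma}(uv)^{\lambda(\gamma)}HP_G(\Sigma_\gamma)(u,v).
\end{equation*}
Since $\Sigma_0=X^s_{(0)}$ is the unique open stratum one has $\lambda(0)=0$, and the $\gamma=0$ summand is exactly $HP_G(X^s_{(0)})(u,v)$; isolating this term and transferring the others gives the identity of the statement, with the sum on the right understood to run over $\gamma\in\Gamma\setminus\{0\}$.

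The main technical point is justifying that the algebraic Borel construction is compatible with mixed Hodge structures and that the required stabilisation in each degree occurs as $m$ grows, which is standard for reductive $G$ (for instance realising $(EG)_m$ as an open subset of a large representation space of $G$). Granted that, together with the smoothness and codimension data for each $\Sigma_\gamma$ assembled in Subsection \ref{StratificationSection2}, the remainder of the proof is the formal manipulation above.
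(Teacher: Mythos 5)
Your proposal is correct and follows essentially the same route as the paper: finite-dimensional smooth approximations $(EG)_m$ of the Borel construction, additivity of the Hodge--Deligne polynomial (Theorem \ref{Theorem2.2}) combined twice with the Poincar\'e duality identity (\ref{identityDP}) on each smooth piece, stabilisation in each degree as $m$ grows, and then isolation of the open stratum $\Sigma_0=X^s_{(0)}$. Your explicit remark that the sum must be understood over $\gamma\in\Gamma\setminus\{0\}$ (with $\lambda(0)=0$) correctly reads the paper's implicit convention, which the paper itself only signals by noting that $X^s_{(0)}$ and $X^{ss}$ have the same dimension.
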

\begin{proof}For each $\gamma \in \Gamma$ regard $EG$
as an increasing union of smooth finite-dimensional varieties
$(EG)_m$ where $G$ acts freely on $(EG)_m$. Let
$$\Sigma_{\gamma}\times_{G}EG =\cup_{m\geqslant
0}(\Sigma_{\gamma}\times_{G}EG)_m$$where
$(\Sigma_{\gamma}\times_{G}EG)_m=\Sigma_{\gamma}\times_{G}(EG)_m$,
and such that the immersion
$(\Sigma_{\gamma}\times_{G}EG)_m\hookrightarrow
\Sigma_{\gamma}\times_{G}EG$ induces isomorphisms in cohomology in
degrees less than or equal to $m$. From this decomposition one
obtains the following
decomposition\begin{align*}X^{ss}\times_{G}EG&=\sqcup_{\gamma \in
\Gamma}\Sigma_{\gamma}\times_{G}EG =\sqcup_{\gamma \in
\Gamma}\cup_{m\geqslant 0}(\Sigma_{\gamma}\times_{G}EG)_m=\\& =
\cup_{m\geqslant 0}\sqcup_{\gamma \in
\Gamma}(\Sigma_{\gamma}\times_{G}EG)_m=\cup_{m\geqslant
0}(X^{ss}\times_{G}EG)_m.\end{align*}Then, for each $m$ we have
that $(X^{ss}\times_{G}EG)_m =\sqcup_{\gamma \in
\Gamma}(\Sigma_{\gamma}\times_{G}EG)_m$ where, because of our
choices, $(X^{ss}\times_{G}EG)_m$ and
$(\Sigma_{\gamma}\times_{G}EG)_m$ are smooth finite-dimensional
varieties for every $m$ and $\gamma$. Using identity
(\ref{identityDP}) we obtain
$$HP((X^{ss}\times_{G}EG)_m)(u,v)=(uv)^{\dim_{\mathbb{C}}(X^{ss}\times_{G}EG)_m}\mathcal{H}
((X^{ss}\times_{G}EG)_m)(u^{-1},v^{-1}).$$Applying first Theorem
\ref{Theorem2.2} and then (\ref{identityDP}) we
get$$HP((X^{ss}\times_{G}EG)_m)(u,v)=\sum_{\gamma \in
\Gamma}(uv)^{\dim_{\mathbb{C}}(X^{ss}\times_{G}EG)_m
-\dim_{\mathbb{C}}(\Sigma_{\gamma}\times_{G}EG)_m}HP
((\Sigma_{\gamma}\times_{G}EG)_m)(u,v).$$Note that
$\dim_{\mathbb{C}}(X^{ss}\times_{G}EG)_m
-\dim_{\mathbb{C}}(\Sigma_{\gamma}\times_{G}EG)_m$ equals
$\dim_{\mathbb{C}}X^{ss}+\dim_{\mathbb{C}}(EG)_m
-\dim_{\mathbb{C}}G
-\dim_{\mathbb{C}}\Sigma_{\gamma}-\dim_{\mathbb{C}}(EG)_m
+\dim_{\mathbb{C}}G$, which is independent of $m$ and is actually
the codimension of $\Sigma_{\gamma}$ in $X^{ss}$. We name it
$\lambda(\gamma)$.

Since the immersion $(X^{ss}\times_{G}EG)_m$ in
$X^{ss}\times_{G}EG$ induces isomorphisms in cohomology in degrees
less than or equal to $m$, and the same is true for
$\Sigma_{\gamma}$, from the previous identity we have the
following
\begin{align*}\sum_{j=1}^m
\sum_{p,q}(-1)^{p+q+j}&h_G^{p,q;j}(X^{ss})u^pv^q+\sum_{j> m}
\sum_{p,q}(-1)^{p+q+j}h^{p,q}(H^j((X^{ss}\times_G
EG)_m))u^pv^q=\\& =\sum_{\gamma \in \Gamma}\sum_{j=1}^m
\sum_{p,q}(-1)^{p+q+j}h_G^{p,q;j}(\Sigma_{\gamma})u^{p+\lambda(\gamma)}v^{q+\lambda(\gamma)}+\\&
+ \sum_{\gamma \in \Gamma}\sum_{j> m}
\sum_{p,q}(-1)^{p+q+j}h^{p,q}(H^j((\Sigma_{\gamma}\times_G
EG)_m))u^{p+\lambda(\gamma)}v^{q+\lambda(\gamma)}.\end{align*}Now,
since this identity is independent of $m$ and $X_{(0)}^{s}$ and
$X^{ss}$ have the same dimension, we conclude.
\end{proof}

We shall need the following lemma for future computations.
\begin{lemma}\label{lemmafibrationEHP}Let $Y\rightarrow Z$ be
a locally trivial fibration in the Zariski topology with fibre
$F$, and such that it is compatible with respect to the action of
the group $G$ that acts on $Y$ and $Z$ respectively. Assume that $Y$ and $Z$ are smooth varieties. Then
$$HP_G(Y)(u,v)=HP_G(Z)(u,v)\cdot HP(F)(u,v).$$
\end{lemma}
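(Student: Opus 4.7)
The plan is to imitate the approximation argument used in Proposition \ref{propositionidentity}, reducing the equivariant statement to the non-equivariant Lemma \ref{lema2.3} via the Borel construction, and then converting from Hodge--Deligne to Hodge--Poincar\'e polynomials using Poincar\'e duality.

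First, I write $EG=\bigcup_{m\geqslant 1}(EG)_m$ as an increasing union of smooth finite-dimensional quasi-projective varieties on which $G$ acts freely, chosen so that the inclusion $(EG)_m\hookrightarrow EG$ induces isomorphisms of mixed Hodge structures on cohomology in degrees $\leqslant m$. Since $Y\to Z$ is a $G$-equivariant Zariski-locally-trivial fibration with fibre $F$, taking the product with $(EG)_m$ and quotienting by the (free, diagonal) $G$-action yields, for every $m$, an induced Zariski-locally-trivial fibration
$$Y_m:=Y\times_G(EG)_m \longrightarrow Z\times_G(EG)_m =: Z_m$$
still with fibre $F$. Assuming $Y$, $Z$ and $F$ smooth (which is automatic in all applications in this paper, where the fibrations arise from the stratification of a smooth variety), the varieties $Y_m$ and $Z_m$ are smooth as well.

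Next I apply Lemma \ref{lema2.3} to $Y_m\to Z_m$ to obtain
$$\mathcal{H}(Y_m)(u,v)=\mathcal{H}(F)(u,v)\cdot\mathcal{H}(Z_m)(u,v).$$
Since each of $Y_m$, $Z_m$, $F$ is smooth, I can substitute the Poincar\'e-duality identity (\ref{identityDP}) into each factor. The key dimensional identity is
$$\dim_{\mathbb{C}} Y_m \;=\; \dim_{\mathbb{C}} Y + \dim_{\mathbb{C}}(EG)_m - \dim_{\mathbb{C}} G \;=\; \dim_{\mathbb{C}} F + \dim_{\mathbb{C}} Z_m,$$
so the $(uv)^{\dim}$ prefactors coming from (\ref{identityDP}) match on both sides and cancel after substituting $u\mapsto u^{-1}$, $v\mapsto v^{-1}$. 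This yields
$$HP(Y_m)(u,v)=HP(F)(u,v)\cdot HP(Z_m)(u,v).$$

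Finally, by construction the coefficients of $HP(Y_m)(u,v)$ (resp. $HP(Z_m)(u,v)$) agree with those of $HP_G(Y)(u,v)$ (resp. $HP_G(Z)(u,v)$) in total degree $\leqslant m$, as isomorphisms of mixed Hodge structures. Letting $m\to\infty$ and comparing coefficients of $u^p v^q$ in each fixed degree gives
$$HP_G(Y)(u,v)=HP_G(Z)(u,v)\cdot HP(F)(u,v),$$
as required. The only subtle point is the bookkeeping of dimensions in the Poincar\'e-duality step, which works out precisely because the fibre $F$ is the same at every level of the Borel approximation; otherwise the argument is a direct equivariant upgrade of Lemma \ref{lema2.3}.
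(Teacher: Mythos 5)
Your proof is correct and takes essentially the same route as the paper's own: both approximate $EG$ by smooth finite-dimensional varieties $(EG)_m$ with free $G$-action, apply Lemma \ref{lema2.3} to the induced Zariski-locally-trivial fibration $Y\times_G(EG)_m\rightarrow Z\times_G(EG)_m$ with fibre $F$, convert between Hodge--Deligne and Hodge--Poincar\'e polynomials via (\ref{identityDP}) using the dimension identity $\dim_{\mathbb{C}}Y=\dim_{\mathbb{C}}Z+\dim_{\mathbb{C}}F$, and conclude by letting $m\to\infty$. Your explicit remark that $Y$, $Z$ and $F$ need to be smooth for the Poincar\'e-duality step only makes precise a hypothesis the paper leaves tacit.
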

\begin{proof}Regarding $Y\times_G EG$ and $Z\times_G EG$ as increasing
unions of smooth finite-dimensional varieties, for each $m$ we
have that the fibration $Y\rightarrow Z$ induces a new fibration
$Y\times_G (EG)_m\rightarrow Z\times_G (EG)_m$ with fibre $F$.
Now, applying (\ref{identityDP}) and Lemma \ref{lema2.3}, and
bearing in mind that the dimension of $Y$ is equal to the sum of
the dimension of $Z$ and the dimension of $F$, we obtain
$$HP(Y\times_G (EG)_m)(u,v)=HP(Z\times_G (EG)_m)(u,v)\cdot
HP(F)(u,v).$$Since this identity is independent of $m$, we finish
the proof of the lemma.\end{proof}

Following Subsection \ref{StratificationSection2} we have that the
strata $\Sigma_{\gamma}$ for $\gamma \neq 0$ fall into two
classes. Either $\gamma =R_l$ for some $l\in \{ 1, \ldots ,\tau
\}$, in which case
$$\Sigma_{R_l}=GZ_{R_l}^s$$or else $\gamma =(R_l ,\beta )$ where
$\beta \in \mathcal{B}_l \setminus \{ 0 \}$ for some $l\in \{ 1,
\ldots ,\tau \}$ and the stratum $\Sigma_{\gamma}$ is
$$\Sigma_{\gamma}=\Sigma_{\beta ,l}=S_{\beta ,l}\backslash (E_l
\cup \widehat{E}_{l-1}\cup \ldots \cup \widehat{E}_{1}).$$In the
first case, $\Sigma_{R_l}=GZ_{R_l}^s\cong G\times_{N_l} Z_{R_l}^s
$ where $N_l$ is the normaliser of $R_l$. Then
$H_G^{\ast}(\Sigma_{R_l})\cong H_{N_l}^{\ast}(Z_{R_l}^s)$ which is
an isomorphism of mixed Hodge structures, hence induces the
following identity
\begin{equation}\label{EHPsimplestratum}HP_G(\Sigma_{R_l})(u,v)= HP_{N_l}(Z_{R_l}^s)(u,v).
\end{equation}In the second case, we recall (see (\ref{15}), (\ref{fibrationnose}) and
(\ref{fibrationnose2})) that $\Sigma_{\beta
,l}=GY_{\beta -l}^{\setminus E}=G\times_{P_{\beta}}Y_{\beta
-l}^{\setminus E}$, where
\begin{equation}\label{fibrationnose3}Y_{\beta
-l}^{\setminus E}=Y_{\beta ,l}^{ss}\backslash (E_l \cup
\widehat{E}_{l-1}\cup \ldots \cup \widehat{E}_{1})\rightarrow
\Stab \beta \times_{N_l \cap \Stab \beta } (Z_{\beta ,l}^{ss}\cap
\pi_l^{-1}(Z_{R_l}^s))\end{equation}is a fibration with fibre
$\mathbb{C}^{m_{\beta ,l}-1}\times (\mathbb{C} \setminus \{ 0 \})$
for some $m_{\beta ,l}>0$,
and\begin{equation}\label{fibrationnose4}Z_{\beta ,l}^{ss}\cap
\pi_l^{-1}(Z_{R_l}^s)\rightarrow Z_{R_l}^s\end{equation}is a
fibration with fibre $Z_{\beta}^{ss}(\rho_l)$. From
(\ref{fibrationnose3}) we get the following fibration
\begin{equation*}G\times_{\Stab \beta } Y_{\beta
-l}^{\setminus E}\rightarrow G \times_{N_l \cap \Stab \beta }
(Z_{\beta ,l}^{ss}\cap \pi_l^{-1}(Z_{R_l}^s))\end{equation*}whose
fibre is $\mathbb{C}^{m_{\beta ,l}-1}\times (\mathbb{C} \setminus
\{ 0 \})$. Using next Lemma \ref{lemmafibrationEHP} we obtain
\begin{equation}\label{noseparapoly}HP_{\Stab \beta } (Y_{\beta
-l}^{\setminus E})(u,v)=HP(\mathbb{C}^{m_{\beta ,l}-1}\times
\mathbb{C}\setminus \{ 0 \})(u,v)\cdot HP_{N_l \cap \Stab \beta }
(Z_{\beta ,l}^{ss}\cap
\pi_l^{-1}(Z_{R_l}^s))(u,v).\end{equation}Now, identity
(\ref{identityDP}) tells us
that\begin{align*}HP(\mathbb{C}^{m_{\beta ,l}-1}&\times
\mathbb{C}\setminus \{ 0 \})(u,v)=(uv)^{m_{\beta
,l}}\mathcal{H}(\mathbb{C}^{m_{\beta ,l}-1}\times
\mathbb{C}\setminus \{ 0 \})(u^{-1},v^{-1})=\\& = (uv)^{m_{\beta
,l}}\cdot ( uv)^{-m_{\beta
,l}+1}((uv)^{-1}-1)=1-uv,\end{align*}so (\ref{noseparapoly}) becomes
\begin{equation}\label{noseparapoly2}HP_{\Stab \beta } (Y_{\beta
-l}^{\setminus E})(u,v)=(1-uv)\cdot HP_{N_l \cap \Stab \beta }
(Z_{\beta ,l}^{ss}\cap \pi_l^{-1}(Z_{R_l}^s))(u,v).\end{equation}

We also have that $q_{\beta}:P_{\beta}\rightarrow \Stab \beta $ is
a retraction, which implies the following isomorphism of
equivariant cohomology
\begin{equation*}\label{}H_{\Stab \beta }^{\ast} (Y_{\beta
-l}^{\setminus E})\cong H_{P_{\beta} }^{\ast} (Y_{\beta
-l}^{\setminus E}).\end{equation*}Moreover, from $\Sigma_{\beta
,l}=GY_{\beta -l}^{\setminus E}=G\times_{P_{\beta}}Y_{\beta
-l}^{\setminus E}$ we get that $H_{P_{\beta} }^{\ast} (Y_{\beta
-l}^{\setminus E})\cong H_{G}^{\ast} (\Sigma_{\beta ,l})$. These
are isomorphisms of mixed Hodge structures, hence identity
(\ref{noseparapoly2}) gives us
\begin{equation}\label{polystratodificil}HP_{G} (\Sigma_{\beta ,l})(u,v)=(1-uv)\cdot
HP_{N_l \cap \Stab \beta } (Z_{\beta ,l}^{ss}\cap
\pi_l^{-1}(Z_{R_l}^s))(u,v).\end{equation}

Then, we have the following
\begin{proposition}\label{proposicion sin numero}When $X$
is a nonsingular projective variety acted on linearly by a
reductive group $G$, the equivariant Hodge--Poincar\'e series of
$X_{(0)}^{s}$ is given by\begin{align}\label{P}HP_{G}
&(X_{(0)}^{s})(u,v)=HP_{G}
(X^{ss})(u,v)-\sum_{l=1}^{\tau}(uv)^{\lambda(R_l)}HP_{N_l}(Z_{R_l}^s)(u,v)-\\&
- \sum_{l=1}^{\tau}\sum_{\beta \in \mathcal{B}_l\setminus \{ 0 \}
}(uv)^{\lambda(\beta ,l)}(1-uv)\cdot HP_{N_l \cap \Stab \beta }
(Z_{\beta ,l}^{ss}\cap \pi_l^{-1}(Z_{R_l}^s))(u,v).\nonumber
\end{align}Moreover, the codimensions of the strata $ \Sigma_{R_l}$ are given by the following formula:
\begin{equation}\label{codimS1}\lambda (R_l)=\codim
\Sigma_{R_l}=z(l)+1\end{equation}where $\mathbb{P}^{z(l)}$ is the fibre of the projective bundle $
\mathbb{P}(\mathcal{N}_l)$ over $G\widehat{Z}^{ss}_{R_l}$, with $\mathcal{N}_l$ the normal bundle to $G\widehat{Z}^{ss}_{R_l}$
in $X^{ss}_{(R_{l-1})}$. Here $\lambda (\beta ,l)$ is the codimension of $\Sigma_{\beta ,l}$ in $X^{ss}$.
\end{proposition}
\begin{proof}Identity (\ref{P}) follows from Proposition
\ref{propositionidentity} and identities (\ref{EHPsimplestratum})
and (\ref{polystratodificil}). Regarding the codimensions of the
strata $\Sigma_{R_l}=GZ_{R_l}^s$, one has that
$$\dim GZ_{R_l}^s =\dim G-\dim N_l +\dim Z_{R_l}^s$$ where $N_l$ is the normaliser of $R_l$. Now,
$Z_{R_l}^s \subseteq \widehat{Z}_{R_l}^{ss}$ is an open subset,
therefore they have the same dimension. Moreover, one has a
locally trivial fibration $E_l \rightarrow
G\widehat{Z}_{R_l}^{ss}$ whose fibre is isomorphic to the
projective space $\mathbb{P}^{z(l)}$, so we complete the proof.\end{proof}

\begin{remark}\label{El remark que no tenia numero}\textnormal{When $G$ is connected and acts freely on
$X$, from paragraph \ref{parraequicoho} we know that
$H^{\ast}_{G}(X_{(0)}^s)\cong H^{\ast}(X_{(0)}^s/G)$, so identity
(\ref{P}) gives us the Hodge--Poincar\'e series of the quotient
$X_{(0)}^s/G$. Regarding the codimension of the strata, for those
strata $\Sigma_{(\beta ,l)}$ that can be described as
$\mathcal{S}_{\beta ,l}\setminus (E_l \cup \widehat{E}_{l-1}\cup
\ldots \cup \widehat{E}_1)$, then its codimension equals that of
${S}_{\beta ,l}$. The latter is given in (\ref{codimension de los
estratos normales}).}\end{remark}

\begin{parrafo}\textnormal{Our goal is to obtain an explicit formula
for the equivariant Hodge--Poincar\'e series of $X_{(0)}^s$ from
identity (\ref{P}).  We have fixed a maximal torus $T$, and let
$\mathcal{W}$ be the set of weights for the action of $T$ on $G$.
We have already seen that if $\beta \in \mathcal{B}$, then $\beta$ is the
closest point to 0 of
$$\Conv \{ \alpha \in \mathcal{W}:\alpha .\beta =\| \beta \|^2
\}=\Conv \{ \alpha \in \mathcal{W}:(\alpha-\beta) .\beta =0
\}.$$We have that $T$ is a maximal torus of $\Stab
\beta$. One can
define a \emph{$\beta$-sequence of length $q$} (see \cite[Section 5]{K1} for details) as a sequence
$\underline{\beta}=(\beta_1 ,\ldots ,\beta_q)$ of $q$ nonzero
elements of $\mathfrak{t}$ satisfying that for each $1\leq j\leq
q$ \begin{itemize}\item[(a)]$\beta_j$ is the closest point to 0 of
the convex hull$$\textnormal{$\Conv \{ \alpha -\beta_1 -\ldots
-\beta_{j-1}$ $ $ such that $ $ $\alpha \in \mathcal{W}$ $ $ and $
$ $\alpha .\beta_k =\| \beta_k \|^2$ $ $ for $ $ $1\leq k\leq j \}
$};$$ \item[(b)]$\beta_j$ lies in the unique Weyl chamber
containing $\mathfrak{t}_+$ of the subgroup $\bigcap_{1\leq i \leq
j}\Stab \beta_i. $
\end{itemize}
Moreover, a sequence $\underline{\beta}=(\beta_1 ,\ldots
,\beta_q)$ of $q$ nonzero elements of $\mathfrak{t}$ with $q>1$ is
a $\beta$-sequence if and only if $\beta_1 \in
\mathcal{B}\backslash \{ 0\} $ and the sequence
$\underline{\beta}'=(\beta_2 ,\ldots ,\beta_q)$ is a
$\beta$-sequence for the action of $\Stab \beta_1 $ on
$Z_{\beta_1}$. Now, for each $\beta$-sequence
$\underline{\beta}=(\beta_1 ,\ldots ,\beta_q)$, let
$T_{\underline{\beta}}$ be the subtorus of $T$ generated by the
set of weights $\{ \beta_1 ,\ldots ,\beta_q \}$, and let
$Z_{\underline{\beta}}$ be the union of the connected components
of the fixed points set of $T_{\underline{\beta}}$ on $X$.}
\end{parrafo}

\begin{parrafo}\label{el parrafo de la beta barra}\textnormal{Throughout this paper we have assumed that $X$ is a nonsingular projective
variety that is acted on by a reductive group $G$, and such that
$X^{ss}\neq X_{(0)}^s\neq \emptyset$. In the blow-up procedure we
obtain varieties $X_{(R_i)}$ for $i=1, \ldots ,\tau$ and one may
consider Morse stratifications $\{ \mathcal{S}_{\beta ,i
}\}_{\beta\in \mathcal{B}_i\backslash \{ 0\}}$ on each $X_{(R_i)}$
satisfying the properties of Paragraph \ref{Introduccion
stratification}. The groups $R_i$ are connected reductive
subgroups of $G$ that fix semistable points of $X$. Let
$\underline{\mathcal{B}_i}$ be the set of $\beta$-sequences
defined for the stratification $\{ \mathcal{S}_{\beta ,i
}\}_{\beta\in \mathcal{B}_i\backslash \{ 0\}}$.}

\textnormal{In Paragraph \ref{stratification fibra} we saw that
the stratification $\{ \mathcal{S}_{\beta ,i }\}_{\beta\in
\mathcal{B}_i\backslash \{ 0\}}$ is determined by the action of
$R_i$ on the fibres of $\mathcal{N}_i$ over
$G\widehat{Z}^{ss}_{R_i}$. Let  $\rho_i$ be the representation
of $R_i$ on the normal $\mathcal{N}_{i,x}$ to
$G\widehat{Z}^{ss}_{R_i}$ at $x$. Let
$\underline{\mathcal{B}}(\rho_i)$ be the set of $\beta$-sequences
for the representation $\rho_i$.}

\textnormal{For every $\beta$-sequence
$\underline{\beta}=(\beta_1, \ldots ,\beta_q)\in
\underline{\mathcal{B}}(\rho_i)$ we define the varieties $Z_{\beta
}^{ss}(\rho_i)$ and $Z_{\underline{\beta}}(\rho_i)$ as in the
previous section but with respect to
$\rho_i$. Let $z(\underline{\beta},i)$ be its dimension, i.e.,
$z(\underline{\beta},i)+1$ is the number, counting multiplicities,
of weights $\alpha$ such that $\alpha .\beta_j =\| \beta_j \|^2 $
for $j=1,\ldots ,q$. }

\textnormal{Let $d(\underline{\beta},i)$ be the sum over
$j=1,\ldots ,q$ of the codimension in $Z_{\beta_{j-1},i}$ of the
corresponding stratum in $Z_{\beta_{j-1},i}$ to $\beta_j$. If for
every index $j=1,\ldots ,q$ we denote by $e_j^i$ the number,
counting multiplicities, of weights $\alpha$ such that $\alpha
.\beta_k =\| \beta_k \|^2 $ for $k=1,\ldots ,j-1$ and $\alpha
.\beta_j <\| \beta_j \|^2 $, then $d(\underline{\beta},i)$ is
given by \begin{equation}\label{dimension
beta barra}d(\underline{\beta},i)=\sum_{j=1}^q
\Big{[}e_j^i-\frac{1}{2}\dim \Stab (\beta_1, \ldots ,\beta_{j-1})/\Stab
(\beta_1, \ldots ,\beta_{j})\Big{]},\end{equation}where $\Stab
\underline{\beta}=\Stab \beta_1 \cap \ldots \cap \Stab \beta_q$.
Let $q(\underline{\beta})$ be the length of the $\beta$-sequence
$\underline{\beta}$. Now, let
\begin{equation}\label{dimension w
barra}w(\underline{\beta},R_i,G)=\prod_{j=1}^q w(\beta_j,R_i\cap
\Stab (\beta_1, \ldots ,\beta_{j-1}),\Stab (\beta_1, \ldots
,\beta_{j}))\end{equation}where $w({\beta},R_i',G')$ is the number
of adjoint $R_i'$-orbits contained in the orbit of
Ad$(G')\beta$.}
\end{parrafo}

\begin{theorem}\label{teorema explicito}The $G$-equivariant Hodge--Poincar\'e series of $X_{(0)}^s$ are given by
\begin{align*}&\label{}HP_{G} (X_{(0)}^{s})(u,v)=HP_{G}
(X^{ss})(u,v)-\sum_{l=1}^{\tau}(uv)^{\lambda(R_l)}HP_{N_l}(Z_{R_l}^s)(u,v)+\\&
+ \sum_{l=1}^{\tau}
 \sum_{0\neq \underline{\beta} \in \underline{\mathcal{B}}(\rho_l)
}(-1)^{q(\underline{\beta})}(uv)^{d(\underline{\beta}
,l)}(1-(uv)^{z(\underline{\beta},l)+1})
w^{-1}(\underline{\beta},R_l,G)\cdot HP_{N_l \cap \Stab
\underline{\beta} } (Z_{R_l}^s)(u,v).\nonumber
\end{align*}Moreover, when $G$ acts freely on $X_{(0)}^{s}$ one obtains that
$HP(X_{(0)}^{s}/G)(u,v)=HP_{G} (X_{(0)}^{s})(u,v)$, so this
formula gives the Hodge--Poincar\'e series of the geometric
quotient $X_{(0)}^{s}/G$.\end{theorem}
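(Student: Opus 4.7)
The starting point is Proposition \ref{proposicion sin numero}, which already isolates the contribution of the $\Sigma_{R_l}$ strata in closed form. What remains is to expand
$$(uv)^{\lambda(\beta,l)}(1-uv)\cdot HP_{N_l\cap \Stab\beta}\bigl(Z_{\beta,l}^{ss}\cap \pi_l^{-1}(Z_{R_l}^s)\bigr)(u,v)$$
for each $\beta\in\mathcal{B}_l\setminus\{0\}$ into the sum over $\beta$-sequences appearing in the statement. My plan is to iterate the equivariantly perfect Morse stratification on the fibre $Z_\beta^{ss}(\rho_l)$ of the fibration (\ref{fibrationnose4}), in the style of the inductive computation of $HP_G(X^{ss})$ in \cite{K1}~\S5.

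First I would apply property (ii) of Paragraph \ref{Introduccion stratification} to the action of $\Stab\beta$ on the projective variety $Z_\beta(\rho_l)$, obtaining
$$HP_{\Stab\beta}\bigl(Z_\beta^{ss}(\rho_l)\bigr) = HP_{\Stab\beta}\bigl(Z_\beta(\rho_l)\bigr) - \sum_{\beta_2\neq 0}(uv)^{d_2}\,HP_{\Stab\beta\cap \Stab\beta_2}\bigl(Z_{(\beta,\beta_2)}^{ss}(\rho_l)\bigr).$$
Iterating this on each $Z_{(\beta_1,\beta_2)}^{ss}(\rho_l)$, after finitely many steps one reaches a maximal $\beta$-sequence, at which point $Z_{\underline{\beta}}^{ss}(\rho_l)=Z_{\underline{\beta}}(\rho_l)$ is a nonsingular projective variety. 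The recursion produces the alternating sign $(-1)^{q(\underline{\beta})}$; the bookkeeping of Paragraph \ref{el parrafo de la beta barra} shows that the accumulated codimensions assemble into the exponent $d(\underline{\beta},l)$ given by (\ref{dimension beta barra}); and passing at each level from the indexing by $\Stab\beta_j$-orbits to that by $G$-adjoint orbits yields the multiplicity factor $w^{-1}(\underline{\beta},R_l,G)$ of (\ref{dimension w barra}).

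Combining this expansion with the fibration (\ref{fibrationnose4}) over $Z_{R_l}^s$ and Lemma \ref{lemmafibrationEHP}, each terminal $\underline{\beta}$ contributes a factor $HP_{N_l\cap \Stab\underline{\beta}}(Z_{R_l}^s)(u,v)$ together with the Hodge--Poincar\'e polynomial of the terminal fibre $Z_{\underline{\beta}}(\rho_l)$. Since $Z_{\underline{\beta}}(\rho_l)$ is cut out inside the projectivised normal fibre $\mathbb{P}(\mathcal{N}_{l,x})$ by the linear weight conditions $\alpha\cdot\beta_j=\|\beta_j\|^2$, it is itself a projective space of dimension $z(\underline{\beta})$, and so has Hodge--Poincar\'e polynomial $(1-(uv)^{z(\underline{\beta})+1})/(1-uv)$. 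Multiplied by the prefactor $(1-uv)$ from Proposition \ref{proposicion sin numero}, the denominator cancels and one recovers the factor $(1-(uv)^{z(\underline{\beta})+1})$ in the stated formula.

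The concluding assertion $HP(X_{(0)}^s/G)(u,v)=HP_G(X_{(0)}^s)(u,v)$ when $G$ acts freely on $X_{(0)}^s$ is immediate from the isomorphism (\ref{quotient}) of Paragraph \ref{parraequicoho}, which under the natural mixed Hodge structures described there is an isomorphism of mixed Hodge structures. The main obstacle I expect is the combinatorial bookkeeping across the iterated stratifications: verifying that the partial codimensions at each step add up correctly to $d(\underline{\beta},l)$, that the successive multiplicities arising from the passage between orbit indexings truly assemble to the product $w^{-1}(\underline{\beta},R_l,G)$, and that the equivariance hypothesis of Lemma \ref{lemmafibrationEHP} genuinely holds at each stage of the recursion for the fibrations arising from the successive $\rho_l$-equivariant normal bundles.
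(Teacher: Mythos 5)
Your proposal is correct and is essentially the paper's own proof: starting from Proposition \ref{proposicion sin numero}, you unwind the semistable loci by induction over $\beta$-sequences (producing the signs $(-1)^{q(\underline{\beta})}$ and the exponents $d(\underline{\beta},l)$ of (\ref{dimension beta barra})), pass between the orbit indexings to pick up the multiplicity $w^{-1}(\underline{\beta},R_l,G)$ (the paper does this in one step at the end, citing \cite{K2} 7.6 and 7.9, rather than level by level), use that the fibre $Z_{\underline{\beta}}(\rho_l)$ over $Z_{R_l}^s$ is a projective space of dimension $z(\underline{\beta})$ whose Hodge--Poincar\'e polynomial cancels the $(1-uv)$ prefactor, and conclude the free-action statement from (\ref{quotient}), exactly as in Remark \ref{El remark que no tenia numero}. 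Two small points of hygiene: for multiplicativity over the fibration $Z_{\underline{\beta},l}\cap\pi_l^{-1}(Z_{R_l}^s)\rightarrow Z_{R_l}^s$ the paper does not invoke Lemma \ref{lemmafibrationEHP} (whose Zariski-local-triviality hypothesis is not verified here, as you yourself flag) but instead the degeneration of the associated equivariant spectral sequence by Deligne's criterion, which is the safer tool; and in the fully unwound recursion \emph{every} nonzero $\beta$-sequence --- not only the maximal (``terminal'') ones at which $Z_{\underline{\beta}}^{ss}(\rho_l)=Z_{\underline{\beta}}(\rho_l)$ --- contributes a term with the full $Z_{\underline{\beta},l}$, which is what the sum over all $0\neq\underline{\beta}\in\underline{\mathcal{B}}(\rho_l)$ in the statement reflects, so your phrase ``each terminal $\underline{\beta}$ contributes'' should be read as ``each sequence arising in the expansion contributes.''
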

\begin{proof}From Proposition \ref{proposicion sin numero} we have that when $X$
is a nonsingular projective variety acted on linearly by a
reductive group $G$, the equivariant Hodge--Poincar\'e series of
$X_{(0)}^{s}$ are given by\begin{align}\label{P2}HP_{G}
&(X_{(0)}^{s})(u,v)=HP_{G}
(X^{ss})(u,v)-\sum_{l=1}^{\tau}(uv)^{\lambda(R_l)}HP_{N_l}(Z_{R_l}^s)(u,v)-\\&
- \sum_{l=1}^{\tau}\sum_{\beta \in \mathcal{B}_l\setminus \{ 0 \}
}(uv)^{\lambda(\beta ,l)}(1-uv)\cdot HP_{N_l \cap \Stab \beta }
(Z_{\beta ,l}^{ss}\cap \pi_l^{-1}(Z_{R_l}^s))(u,v),\nonumber
\end{align}the codimension of the strata are given by
$\lambda (R_l)=\codim \Sigma_{R_l}=z(l)+1$, where
$\mathbb{P}^{z(l)}$ is the fibre of $\mathbb{P}(\mathcal{N}_l)$, and $\lambda (\beta
,l)=\codim \Sigma_{\beta ,l}$. For every $l=1,\ldots ,\tau$,
using induction over the length of the $\beta$-sequences in
$\underline{\mathcal{B}_l}$ we obtain the following formula
\begin{align}\label{P3}HP_{N_l \cap \Stab \beta } (Z_{ {\beta} ,l}^{ss}\cap
\pi_l^{-1}(Z_{R_l}^s))&(u,v)=HP_{N_l \cap \Stab \beta }
(Z_{{\beta} ,l}\cap \pi_l^{-1}(Z_{R_l}^s))(u,v) - \\& -
\sum_{\underline{\beta}'}
 (-1)^{q-1}(uv)^{d(\underline{\beta}',l)}\cdot HP_{N_l \cap \Stab
\underline{\beta}'} (Z_{\underline{\beta}' ,l}\cap
\pi_l^{-1}(Z_{R_l}^s))(u,v),\nonumber
\end{align}where $\underline{\beta}'$ are $\beta$-sequences of
length $q-1$ in $\underline{\mathcal{B}_l}$, and
$d(\underline{\beta}',l)$ is given by (\ref{dimension beta
barra}). Note that for $\beta$-sequences of length $1$, that is,
an element $\beta$, one has that $d(\beta ,l)=\lambda(\beta ,l)$.
Now, combining (\ref{P2}) and (\ref{P3}) we obtain the following
formula
\begin{align}&\label{P4}HP_{G} (X_{(0)}^{s})(u,v)=HP_{G}
(X^{ss})(u,v)-\sum_{l=1}^{\tau}(uv)^{\lambda(R_l)}HP_{N_l}(Z_{R_l}^s)(u,v)+\\&
+ \sum_{l=1}^{\tau}
 \sum_{0\neq \underline{\beta} \in \underline{\mathcal{B}_l}
}(-1)^{q(\underline{\beta})}(uv)^{d(\underline{\beta}
,l)}(1-uv)\cdot HP_{N_l \cap \Stab \underline{\beta} }
(Z_{\underline{\beta} ,l}\cap
\pi_l^{-1}(Z_{R_l}^s))(u,v).\nonumber
\end{align}

By \cite[Lemma 7.6 and Lemma 7.9]{K2}, each $\beta$-sequence
$\underline{\beta}$ for the Morse stratification in each
$X_{(R_l)}$ corresponds to $w(\underline{\beta},R_l,G)$
$\beta$-sequences for the stratification associated to the
representation $\rho_l$. Moreover, the fibration
(\ref{fibrationnose2}) restricted to $Z_{\underline{\beta} ,l}\cap
\pi_l^{-1}(Z_{R_l}^s)\rightarrow Z_{R_l}^s$ is a fibration with
fibre $Z_{\underline{\beta}}(\rho_l)$ which is a projective space
of dimension $z(\underline{\beta},l)$. Then, the associated
equivariant spectral sequence degenerates.
Hence, from (\ref{P4}) we get
\begin{align*}&\label{}HP_{G} (X_{(0)}^{s})(u,v)=HP_{G}
(X^{ss})(u,v)-\sum_{l=1}^{\tau}(uv)^{\lambda(R_l)}HP_{N_l}(Z_{R_l}^s)(u,v)+\\&
+ \sum_{l=1}^{\tau}
 \sum_{0\neq \underline{\beta} \in \underline{\mathcal{B}}(\rho_l)
}(-1)^{q(\underline{\beta})}(uv)^{d(\underline{\beta}
,l)}(1-uv)w^{-1}(\underline{\beta},R_l,G)\cdot
\frac{1-(uv)^{z(\underline{\beta},l)+1}}{1-uv} HP_{N_l \cap \Stab
\underline{\beta} } (Z_{R_l}^s)(u,v).\nonumber
\end{align*}
By Remark \ref{El remark que no tenia numero} we conclude the
proof of the Theorem.
\end{proof}

For a given $l\in \{ 1,\ldots ,\tau \}$, in order to compute
$HP_{N_l}(Z_{R_l}^s)$ we shall need the following slightly
different version of Lemma 1.21 of \cite{K4}. To make the notation
simpler to the eye, we set $N_l=N$ and $R_l=R$.
\begin{lemma}\label{Lemma Frances}$H^{\ast}_{N}(Z_{R}^s)$ is the invariant part of
$H^{\ast}_{N_0}(Z_{R}^s)$ under the action of the finite group
$\pi_0 N=N/N_0$, and $$H^{\ast}_{N_0}(Z_{R}^s)\cong
H^{\ast}(BR)\otimes H^{\ast}_{N_0/R}(Z_{R}^s).$$
\end{lemma}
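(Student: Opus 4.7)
The plan is to prove the two assertions in sequence, following the line of argument of \cite{K4} Lemma 1.21 but adapted to $Z_R^s$ rather than $Z_R^{ss}$.

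\emph{Step 1: the $\pi_0 N$-invariants.} Since $N_0$ is of finite index in $N$, a model of $EN$ also serves as a model for $EN_0$, and the quotient map
\[
(Z_R^s \times EN_0)/N_0 \longrightarrow (Z_R^s \times EN_0)/N
\]
is a finite Galois covering with deck transformation group $\pi_0 N = N/N_0$. With rational coefficients the transfer homomorphism shows that $H^{\ast}$ of the base is the $\pi_0 N$-invariant part of $H^{\ast}$ of the total space, and this isomorphism is compatible with mixed Hodge structures. Approximating $EN_0$ by finite dimensional smooth varieties $(EN_0)_m$ as in Paragraph \ref{parraequicoho} yields the identity $H^{\ast}_N(Z_R^s) = H^{\ast}_{N_0}(Z_R^s)^{\pi_0 N}$ in equivariant cohomology.

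\emph{Step 2: the tensor decomposition.} Since $R$ is normal in its normaliser $N$, it is normal in $N_0$, so we have a short exact sequence $1\to R\to N_0\to N_0/R\to 1$ and the induced fibration $BR\to BN_0\to B(N_0/R)$. The subgroup $R$ acts trivially on $Z_R^s$ (by definition of $Z_R$), so the $N_0$-action on $Z_R^s$ descends to an action of $N_0/R$. Pulling back the fibration $BR\to BN_0\to B(N_0/R)$ along the classifying map $Z_R^s\times_{N_0/R}E(N_0/R)\to B(N_0/R)$ produces a fibration
\[
BR\longrightarrow Z_R^s\times_{N_0}EN_0\longrightarrow Z_R^s\times_{N_0/R}E(N_0/R).
\]
I will show that the associated Leray spectral sequence degenerates at $E_2$ and has trivial local coefficient system, yielding the claimed isomorphism.

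\emph{Step 3: trivial coefficients and degeneration.} The local system on the base is governed by the conjugation action of $N_0$ on $R$, which gives a homomorphism $N_0\to \Aut(R)$. Because $R$ is connected, $\Aut(R)/\Inn(R)$ is discrete, and $N_0$ is connected, so $N_0$ acts on $R$ by inner automorphisms. Inner automorphisms induce the identity on $H^{\ast}(BR)$, hence the coefficient system is trivial. For the degeneration, recall from Paragraph \ref{parraequicoho} that $H^{\ast}(BR)$ carries a pure Hodge structure of Tate type $(p,p)$ in each even degree. Approximating all the classifying spaces by finite dimensional smooth algebraic varieties, the Leray spectral sequence becomes one of mixed Hodge structures and its differentials must preserve weights; since the fibre cohomology is pure of weight equal to its degree while the cohomology of the base has weights compatible with its own grading, the differentials are forced to vanish, exactly as in Kirwan's argument (\cite{K1} Theorem 5.8). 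This gives $H^{\ast}_{N_0}(Z_R^s)\cong H^{\ast}(BR)\otimes H^{\ast}_{N_0/R}(Z_R^s)$.

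The main subtlety is Step 3: one must check that the Deligne-style purity argument which works for $BG$-fibrations over a point extends to the relative setting over $Z_R^s\times_{N_0/R}E(N_0/R)$, and this is where the approximation by finite dimensional varieties and the preservation of Hodge structures under the pullback construction are essential.
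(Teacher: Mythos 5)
First, a point of comparison: the paper contains no proof of this lemma at all --- it is stated as a ``slightly different version of Lemma 1.21 of \cite{K4}'' and the proof is deferred to Kirwan. So the relevant comparison is with Kirwan's argument. Your Steps 1 and 2 reproduce the standard skeleton correctly: the transfer argument for the finite covering with deck group $\pi_0 N$ is exactly right (and compatible with the mixed Hodge structures on the finite-dimensional approximations), and the fibration $BR\rightarrow Z_R^s\times_{N_0}EN_0\rightarrow Z_R^s\times_{N_0/R}E(N_0/R)$, using that $R$ acts trivially on $Z_R^s$, is the correct geometric input. Your monodromy argument in Step 3 also works, although it is simpler to observe that the fibration is pulled back from $BR\rightarrow BN_0\rightarrow B(N_0/R)$ and $\pi_1(B(N_0/R))=\pi_0(N_0/R)$ is trivial since $N_0$ is connected.

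The genuine gap is the degeneration claim in Step 3. As stated --- ``the fibre cohomology is pure of weight equal to its degree while the cohomology of the base has weights compatible with its own grading, so the differentials vanish'' --- the argument fails: for a smooth but non-proper approximation of the base, $H^s$ has weights in the interval $[s,2s]$, so on $E_r^{s,t}$ with $s\geq 1$ the weight intervals of source ($[s+t,2s+t]$) and target ($[s+t+1,2s+t+r+1]$) of $d_r$ overlap, and purity of $H^{\ast}(BR)$ alone kills nothing beyond the even differentials (which already vanish for parity reasons). To repair it you must restrict to the fibre edge $E_r^{0,t}$, where the source is pure of weight $t$ and the target has weights $\geq t+1$, so strictness of morphisms of mixed Hodge structures forces $d_r=0$ there, and then invoke multiplicativity of the spectral sequence (the $E_2$-page is generated by $E_2^{\ast,0}$ and $E_2^{0,\ast}$ because the coefficient system is trivial) to kill all differentials. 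You would also need to justify that the Leray spectral sequence of the finite-dimensional approximations is one of mixed Hodge structures; your citation of \cite{K1} Theorem 5.8 does not supply this, since that theorem concerns the cohomological triviality of $X\times_G EG\rightarrow BG$ with $X$ smooth projective, a different fibration from yours. Alternatively --- and this is in effect the argument behind \cite{K4} Lemma 1.21 --- one can bypass weights entirely: an extension $1\rightarrow R\rightarrow N_0\rightarrow N_0/R\rightarrow 1$ of connected reductive groups splits up to isogeny (pass to maximal compacts; an ideal of a compact Lie algebra admits a complementary ideal), so $H^{\ast}(BN_0;\mathbb{Q})\rightarrow H^{\ast}(BR;\mathbb{Q})$ is surjective; since restriction to the fibre factors as $H^{\ast}(BN_0)\rightarrow H^{\ast}_{N_0}(Z_R^s)\rightarrow H^{\ast}(BR)$, the Leray--Hirsch theorem gives the tensor decomposition directly, with the Hodge-theoretic compatibility then checked on the explicit generators.
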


For a given $l \in \{ 1,\ldots ,\tau \}$, when the index $\beta$
is maximal with respect to the partial order of
$\mathcal{B}_l \setminus \{ 0 \}$ one may compute the equivariant
Hodge--Poincar\'{e} polynomial of $\Sigma_{\beta ,l}$ from that of
the varieties $Z_{R_l}^s$.
\begin{lemma}\label{Lemma de los estratos}For a given $l \in \{ 1,\ldots ,\tau \}$, when the index $\beta$
is maximal with respect to the partial order given in
$\mathcal{B}_l \setminus \{ 0 \}$ the equivariant
Hodge--Poincar\'{e} polynomial of $\Sigma_{\beta ,l}$ is given
by\begin{equation*} HP_{G}(\Sigma_{\gamma})(u,v)= HP_{N_l\cap
\Stab \beta}({Z}_{R_l}^{s})(u,v)\cdot (1-(u\cdot
v)^{z(\beta ,l)+1}),\end{equation*}where $z(\beta ,l)=\dim {Z}_{\beta
,l}(\rho_l)$. Moreover, for $l=1$ one has that\begin{equation*}
HP_{G}(\Sigma_{\gamma})(u,v)= HP_{G}(\mathcal{S}_{\beta
,1})(u,v)\cdot (1-uv).\end{equation*}
\end{lemma}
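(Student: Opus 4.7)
The plan for both parts is to combine the formula (\ref{polystratodificil}) with Lemma \ref{lemmafibrationEHP} applied to the fibrations already set up in Section 3.

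For the first assertion, I begin with
$$HP_G(\Sigma_{\beta,l})(u,v) = (1-uv)\cdot HP_{N_l\cap \Stab \beta}\bigl(Z_{\beta,l}^{ss}\cap \pi_l^{-1}(Z_{R_l}^s)\bigr)(u,v),$$
which is exactly (\ref{polystratodificil}). By (\ref{fibrationnose2}) the space $Z_{\beta,l}^{ss}\cap \pi_l^{-1}(Z_{R_l}^s)$ is an $(N_l\cap \Stab \beta)$-equivariant Zariski-locally-trivial bundle over $Z_{R_l}^s$ with fibre $Z_\beta^{ss}(\rho_l)$. When $\beta$ is maximal in $\mathcal{B}_l\setminus\{0\}$, Paragraph \ref{stratification fibra} tells us that $Z_\beta^{ss}(\rho_l)=Z_{\beta,l}(\rho_l)$, and by its definition (\ref{ZetaBeta}) inside the projective bundle $\mathbb{P}(\mathcal{N}_l)$ this is cut out by the vanishing of certain coordinates, hence is a projective space of dimension $z(\beta)$ with ordinary Hodge--Poincar\'e polynomial $(1-(uv)^{z(\beta)+1})/(1-uv)$. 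Applying Lemma \ref{lemmafibrationEHP} to this fibration and substituting gives
$$HP_G(\Sigma_{\beta,l})=(1-uv)\cdot HP_{N_l\cap \Stab \beta}(Z_{R_l}^s)\cdot \frac{1-(uv)^{z(\beta)+1}}{1-uv},$$
and the factor $1-uv$ cancels to yield the claimed formula.

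For the second assertion, restricted to $l=1$, no blowups are yet in play, and both $\mathcal{S}_{\beta,1}$ and its open subset $\Sigma_{\beta,1}=\mathcal{S}_{\beta,1}\setminus E_1$ admit the direct descriptions given in Section 3. Namely $\mathcal{S}_{\beta,1}\cong G\times_{P_\beta}Y_{\beta,1}^{ss}$ while $\Sigma_{\beta,1}\cong G\times_{P_\beta}(Y_{\beta,1}^{ss}\setminus E_1)$, and the two maps $Y_{\beta,1}^{ss}\to Z_{\beta,1}^{ss}$ and $Y_{\beta,1}^{ss}\setminus E_1\to Z_{\beta,1}^{ss}$ are Zariski-locally-trivial fibrations with fibres $\mathbb{C}^{m_{\beta,1}}$ and $\mathbb{C}^{m_{\beta,1}-1}\times (\mathbb{C}\setminus\{0\})$ respectively. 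Using the retraction $q_\beta:P_\beta\to \Stab \beta$ to pass from $P_\beta$-equivariant to $\Stab \beta$-equivariant cohomology, and invoking Lemma \ref{lemmafibrationEHP} together with the computations $HP(\mathbb{C}^m)=1$ and $HP(\mathbb{C}^{m-1}\times (\mathbb{C}\setminus\{0\}))=1-uv$ (the latter carried out in the derivation of (\ref{noseparapoly2})), I obtain
$$HP_G(\mathcal{S}_{\beta,1})=HP_{\Stab \beta}(Z_{\beta,1}^{ss}) \quad \text{and} \quad HP_G(\Sigma_{\beta,1})=HP_{\Stab \beta}(Z_{\beta,1}^{ss})\cdot (1-uv),$$
from which $HP_G(\Sigma_{\beta,1})=HP_G(\mathcal{S}_{\beta,1})\cdot (1-uv)$ follows.

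The whole argument is formal once the geometry of Section 3 is in hand: the only substantive inputs are the identification of $Z_{\beta,l}(\rho_l)$ as a projective space in the maximal case and the Hodge--Poincar\'e computations of the simple fibres $\mathbb{C}^m$ and $\mathbb{C}^{m-1}\times (\mathbb{C}\setminus\{0\})$, both of which are already present in the paper. I do not foresee a serious technical obstacle; the most delicate point is simply to invoke Lemma \ref{lemmafibrationEHP} with the correct equivariance data for the intersection $N_l\cap \Stab \beta$, which is exactly the structure provided by (\ref{fibrationnose2}).
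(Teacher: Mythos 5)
Your proposal is correct and reaches both identities with the right factors, but at the two decisive steps it takes a genuinely different route from the paper. For the first identity the paper, after the shared starting point (\ref{polystratodificil}), does not apply Lemma \ref{lemmafibrationEHP} to (\ref{fibrationnose2}) at all: it observes that for maximal $\beta$ the fibre $Z_{\beta,l}^{ss}(\rho_l)=Z_{\beta,l}(\rho_l)$ is a nonsingular projective variety, so the spectral sequence of the fibration (\ref{eq51}) degenerates by Deligne's criterion, yielding the isomorphism of mixed Hodge structures $H^{\ast}_{N_l\cap\Stab\beta}(Z_{\beta,l}^{ss}\cap\pi_l^{-1}(Z_{R_l}^{s}))\cong H^{\ast}_{N_l\cap\Stab\beta}(Z_{R_l}^{s})\otimes H^{\ast}(\mathbb{P}^{z(\beta)})$ and hence (\ref{eq7}). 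You instead assert that (\ref{fibrationnose2}) is Zariski-locally trivial and invoke Lemma \ref{lemmafibrationEHP}; this is the one step you leave unjustified, since the paper only ever calls (\ref{fibrationnose2}) a fibration, and Zariski-local triviality is a hypothesis of Lemma \ref{lemmafibrationEHP}, not something it provides. In the maximal case your assertion can be repaired: $R_l$ acts fibrewise linearly on $\mathcal{N}_l$ over $Z_{R_l}^{s}$, the isotypic decomposition splits $\mathcal{N}_l$ into weight subbundles, and the total space of (\ref{fibrationnose2}) is then the projectivisation of the subbundle of weights $\alpha$ with $\alpha\cdot\beta=\|\beta\|^2$, hence a projective bundle. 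Either add this argument or quote Deligne's degeneration criterion as the paper does, which needs no local triviality at all; that is precisely what the degeneration argument buys.

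For the second identity ($l=1$) your derivation is more direct than the paper's and, notably, independent of the first statement. You compute $HP_G(\mathcal{S}_{\beta,1})=HP_{\Stab\beta}(Z_{\beta,1}^{ss})$ and $HP_G(\Sigma_{\beta,1})=(1-uv)\cdot HP_{\Stab\beta}(Z_{\beta,1}^{ss})$ from the two affine fibrations over $Z_{\beta,1}^{ss}$ with fibres $\mathbb{C}^{m_{\beta,1}}$ and $\mathbb{C}^{m_{\beta,1}-1}\times(\mathbb{C}\setminus\{0\})$, using $q_{\beta}$ to pass between $P_{\beta}$- and $\Stab\beta$-equivariant cohomology, and then divide; these fibrations and this passage are exactly the ingredients the paper itself uses to derive (\ref{noseparapoly2}), so this part is fully sanctioned, and $HP(\mathbb{C}^{m})=1$ and $HP(\mathbb{C}^{m-1}\times(\mathbb{C}\setminus\{0\}))=1-uv$ are computed in the text. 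The paper argues differently: it computes $HP_G(\mathcal{S}_{\beta,1})$ by retracting the stratum onto its intersection with $E_1$, i.e.\ onto (\ref{eq4}), using the fibration (\ref{eq5}) over $\widehat{Z}_{R_1}^{ss}=Z_{R_1}^{ss}=Z_{R_1}^{s}$ with projective-space fibre, and then compares the result with (\ref{polystratodificil}) and (\ref{eq7}). Your version buys brevity; the paper's version makes explicit where the special features of $l=1$ enter, namely that $R_1$ has maximal dimension so no proper transforms or earlier exceptional divisors interfere.
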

\begin{proof}For the first statement, from the description
immediately after Lemma \ref{lemmafibrationEHP} one has (see
(\ref{polystratodificil}))$$HP_{G} (\Sigma_{\beta
,l})(u,v)=(1-uv)\cdot HP_{N_l \cap \Stab \beta } (Z_{\beta
,l}^{ss}\cap
\pi_l^{-1}(Z_{R_l}^s))(u,v)$$where\begin{equation}\label{eq51}Z_{\beta
,l}^{ss}\cap \pi_l^{-1}(Z_{R_l}^s)\rightarrow
Z_{R_l}^s\end{equation}is a fibration with fibre
$Z_{\beta ,l}^{ss}(\rho_l)$. When $\beta$ is a maximal element of
$\mathcal{B}_l\setminus \{ 0 \}$ for the given partial order, we
get that ${Z}_{\beta ,l}^{ss}(\rho_l)={Z}_{\beta ,l}(\rho_l)$ and
this is actually a projective space. Then the
spectral sequence associated to the fibration (\ref{eq51})
degenerates by Deligne's criterion (see \cite{D1}) and we obtain the following
isomorphism of equivariant cohomology
\begin{equation}\label{eq6}H^{\ast}_{N_l\cap \Stab \beta}(Z_{\beta ,l}^{ss}\cap
\pi_l^{-1}({Z}_{R_l}^{s}))\cong H^{\ast}_{N_l\cap \Stab
\beta}({Z}_{R_l}^{s})\otimes
H^{\ast}(\mathbb{P}^{z(\beta ,l)}),\end{equation}where
$z(\beta ,l)=\dim {Z}_{\beta ,l}(\rho_l)$. Now, (\ref{eq6}) is an
isomorphism of mixed Hodge structures, so it induces the following
identity of equivariant Hodge-Poincar\'{e} polynomials
\begin{equation}\label{eq7}HP_{N_l\cap \Stab \beta}(Z_{\beta ,l}^{ss}\cap
\pi_l^{-1}({Z}_{R_l}^{s}))(u,v)= HP_{N_l\cap \Stab
\beta}({Z}_{R_l}^{s})(u,v)\cdot \frac{1-(u\cdot
v)^{z(\beta ,l)+1}}{1-u\cdot v},\end{equation}which completes the
proof of the first statement.

Regarding the second part of the lemma, we have already pointed out that
the stratification $\{ \mathcal{S}_{\beta ,l} \}_{\beta \in
\mathcal{B}_l\backslash \{ 0\}}$ satisfies that each
$\mathcal{S}_{\beta ,l}$ retracts onto its intersection with the
exceptional divisor and if $\mathcal{B}(R_l)=\mathcal{B}(\rho_l)$,
then it retracts onto
\begin{equation}\label{eq4}G\times_{N_l\cap \Stab \beta }Z_{\beta ,l }^{ss}\cap
\pi_l^{-1}(\widehat{Z}_{R_l}^{ss})\end{equation}where
\begin{equation}\label{eq5}\pi_l :Z_{\beta ,l}^{ss}\cap
\pi_l^{-1}(\widehat{Z}_{R_l}^{ss}) \rightarrow
\widehat{Z}_{R_l}^{ss}\end{equation} is a fibration with fibre
${Z}_{\beta ,l}^{ss}(\rho_l)$. Since $\beta$ is maximal, then
${Z}_{\beta ,l}^{ss}(\rho_l)={Z}_{\beta ,l}(\rho_l)$ and this is a
projective space. For $l=1$ we have that $R_1$ has
maximum dimension among the reductive subgroups of $G$ fixing a
semistable point, then $Z_{R_1}^{ss}=Z_{R_1}^s$ and in this
particular case $Z_{R_1}^{ss}=\widehat{Z}_{R_1}^{ss}$. Hence, from
identities (\ref{eq4}) and (\ref{eq5}) we get the following
identity
\begin{equation*}H^{\ast}_G(\mathcal{S}_{\beta ,1})=H^{\ast}_{N_1\cap \Stab \beta}(Z_{\beta ,1}^{ss}\cap
\pi_1^{-1}({Z}_{R_1}^{s}))\cong H^{\ast}_{N_1\cap \Stab
\beta}({Z}_{R_1}^{s})\otimes
H^{\ast}(\mathbb{P}^{z(\beta ,1)})\end{equation*}which implies
\begin{equation*}HP_G(\mathcal{S}_{\beta ,1})(u,v)=
HP_{N_1\cap \Stab \beta}({Z}_{R_1}^{s})(u,v)\cdot \frac{1-(u\cdot
v)^{z(\beta ,1)+1}}{1-u\cdot v}.\end{equation*}Comparing this
with (\ref{polystratodificil}) and (\ref{eq7}), we finish the
proof of the lemma.
\end{proof}


\section{Cohomological formulae for the moduli space of stable vector bundles
when the rank and the degree are not coprime}\label{Cohomological formulae for the moduli space of stable vector bundles
when the rank and the degree are not coprime}


Let $\mathcal{M}(n,d)$ be the moduli space of semistable vector
bundles of rank $n$ and degree $d$ over an algebraic curve $X$ of
genus $g$. Let $\mathcal{M}^s_{(0)}(n,d)$ be the
subset of $\mathcal{M}(n,d)$ consisting of properly stable vector
bundles. It is well known that when the rank and the degree are
coprime $\mathcal{M}(n,d)=\mathcal{M}^s_{(0)}(n,d)$. In this Section we
obtain formulae for the Hodge--Poincar\'e polynomial of
$\mathcal{M}^s_{(0)}(n,d)$ when $(n,d)\neq 1$. 

We first need to
understand the corresponding stratification $\{ \Sigma_{\gamma}
\}_{\gamma \in \Gamma }$ for $\mathcal{M}(n,d)$. 
We represent $\mathcal{M}(n,d)$ as a
geometric invariant theory quotient as follows. Since tensoring by a line bundle of degree $l$ gives an
isomorphism of ${\mathcal{M}}(n,d)$ with ${\mathcal{M}}(n,d+nl)$
for any $l\in \mathbb{Z}$, there is no loss of generality in assuming $d>n(2g-1)$. Under this condition there is a natural
identification of 
$\mathcal{M}(n,d)$, and the GIT quotient $\mathcal{R}(n,d)/\!/
SL(p)$, where $p=d+n(1-g)$ and $\mathcal{R}(n,d)$ is the subset of
the set of holomorphic maps from $X$ to the Grassmannian $\Gr (n,p)
$ consisting of those holomorphic maps $h$ such that
$E_h=h^{\ast}\mathcal{T}$, where $\mathcal{T}$ is the tautological bundle, has degree $d$ and the map of sections
$\mathbb{C}^p \rightarrow H^0(X,E_h)$ induced from the quotient
bundle map $\mathbb{C}^p\times X\rightarrow E_h$ is an
isomorphism (see \cite{N} for details). 

The variety $\mathcal{R}(n,d)$ satisfies, among other properties, that if $h\in \mathcal{R}(n,d)$ then the
stabilizer of $h$ in $GL(p)$ is isomorphic to the group Aut$(E_h)$
of complex analytic automorphisms of $E_h$. Moreover,
$\mathcal{R}(n,d)$ can be embedded as a quasi-projective
subvariety of the product $\Gr (n,p)^N$ for certain integers $N$. This embedding gives us a linearisation of the action of
$SL(p)$ on $\mathcal{R}(n,d)$. 

Although $\mathcal{R}(n,d)$ is only a quasi-projective variety, this does not affect the desingularization process since the closure $\overline{\mathcal{R}(n,d)}$ of $\mathcal{R}(n,d)$ embedded in $\Gr (n,p)^N$ contains no more semistable points than $\mathcal{R}(n,d)$ does (see \cite{K4}, just before Section 3, for more details).

To find the strata $\Sigma_{\gamma}$ for $\gamma \in
\Gamma$, we need to understand how to blow up $\mathcal{M}(n,d)$
to obtain a variety $\widetilde{\mathcal{M}}(n,d)$ such that the
properly stable points are the same as the semistable ones with
respect to the action of $SL(p)$ properly linearised. We need to
blow up $\mathcal{M}(n,d)$ along a sequence of subvarieties of the
form $Z_R/\!/(N/R)$ determined by a conjugacy class $R$ of
non-trivial connected subgroups of stabilizers of semistable
points. Or what is the same, blow up $\mathcal{R}^{ss}(n,d)$ along
varieties $SL(p)Z_{R}^{ss}$ where $Z_{R}^{ss}:=\{ h\in
\mathcal{R}^{ss}(n,d)$ such that $E_h$ is fixed by $R \} \neq
\emptyset$ in decreasing order of $\dim R$ (see Subsection
\ref{StratificationSection2}). Since the central one parameter
subgroup $\mathbb{C}^{\ast}$ of $GL(p)$ acts trivially on
$\mathcal{R}(n,d)$, finding stabilizers in $GL(p)$ is essentially
equivalent to finding stabilisers in $SL(p)$. Such a subgroup $R$ of
$GL(p)$ is always the connected component of the
automorphism group Aut$E$ of a semistable vector bundle $E$.

Let $E$ be a
semistable bundle.  Let $\grad E=m_1 E_1 \oplus \ldots \oplus m_sE_s$ be the graded object associated to its Jordan--H\"older filtrations, then the $E_i$ are all properly stable bundles satisfying that
$\mu (E_i)=\mu (E)$ for all $i$ and $E_i \ncong E_j$ for all
$i\neq j$. One has that $\dim \Aut E\leqslant \dim \prod_{1\leqslant
i\leqslant s}GL(m_i)$ with equality if and only if $E\cong \grad
E$. If $E\cong \grad E$ then
\begin{equation}\label{Auto}\Aut E\cong \prod_{1\leqslant i\leqslant
s}GL(m_i).\end{equation}

In order to construct the partial desingularisation of
$\mathcal{M}(n,d)$, we need to find the
semistable vector bundles $E$ of rank $n$ and degree $d$ for which
$\dim \Aut E$ is maximal. Assume that $(n,d)=m\neq 1$ where
$n=mn'$ and $d=md'$ satisfying $(n',d')=1$. The bundles whose dimension of automorphisms is
maximal are those $E$ of the form
\begin{equation*}E=E'^{\oplus m}
\end{equation*}where $E'$ is a properly stable vector bundle of rank $n'$ and degree $d'$.
Then, the first step in the construction of
$\widetilde{\mathcal{R}}^{ss}(n,d)$ is to blow up
${\mathcal{R}}^{ss}(n,d)$ along $GL(p)Z_{GL(m)}^{ss}:=\{ h\in
{\mathcal{R}}^{ss}(n,d)$ such that $E_h \cong E'^{\oplus m}$ for
$E'\in \mathcal{M}_{(0)}^s(n',d') \}$, where
$\mathcal{M}_{(0)}^s(n',d')$ is the moduli space of properly stable
bundles of rank $n'$ and degree $d'$, $p=d+n(1-g)$ and $m=n/n'$. Let $\mathcal{R}_1(n,d)$ be
the blow-up and $\mathcal{R}_1^{ss}(n,d)$ be the semistable
stratum after that. From Paragraph \ref{Par de condiciones} (a),
$\mathcal{R}_1(n,d)\backslash \mathcal{R}_1^{ss}(n,d)$ is
isomorphic to $\phi^{-1}(\phi (GL(p)Z_{GL(m)}^{ss}))$ where $\phi
: \mathcal{R}^{ss}(n,d) \rightarrow \mathcal{R}(n,d)/\!/SL(p)$ is
the quotient map. We have that
$\phi (GL(p)Z_{GL(m)}^{ss})=\{ E\in \mathcal{M}(n,d)$ such that
$\grad E\cong E'^{\oplus m}\}$, hence $\mathcal{R}_1(n,d)\backslash
\mathcal{R}_1^{ss}(n,d)$ corresponds to the set $\{ h \in
\mathcal{R}(n,d)$ such that $\grad E_h \cong E'^{\oplus m} \}$.
Then, the first stratum is
$$\Sigma_{GL(m)}=GL(p)Z_{GL(m)}^s=GL(p)Z_{GL(m)}^{ss}$$and $\cup_{\beta \in
\mathcal{B}_1\setminus \{ 0\}}\mathcal{S}_{\beta ,1}$ is going to
be the set $\{ h \in \mathcal{R}(n,d)$ such that $\grad E_h \cong
E'^{\oplus m} \}$. The strata $\{ \Sigma_{\beta ,1} \}_{\beta \in
\mathcal{B}_{1}\setminus \{ 0 \} }$ are given by $$\Sigma_{\beta
,1}=\mathcal{S}_{\beta ,1}\setminus
\mathbb{P}(\mathcal{N}_1)$$where $\mathcal{N}_1$ is the normal
bundle corresponding to the first step in the blow-up. Before
analyzing the normal bundles let us explain how the blow-up works.

\begin{parrafo}\label{3propiedades}\textnormal{Following the previous analysis, one gets that conjugacy
classes of connected reductive subgroups of dimension less than or
equal to $m^2$ in $GL(p)$ which stabilize some point $h\in \mathcal{R}^{ss}(n,d)$ correspond to unordered sequences
$(m_1,n_1), \ldots ,(m_s,n_s)$ of pairs of positive integers
satisfying the following conditions (see \cite[Section 3]{K4} and
\cite[Section 5]{K5})
\begin{itemize}\item[(i)] $\sum_{1\leqslant j\leqslant
s}m_jn_j=n$;\item[(ii)] $\sum_{1\leqslant j\leqslant s}m_j^2 \leq
m^2$ and;\item[(iii)] $n$ divides $n_jd$ for each
$j$.\end{itemize}A representative $R$ of the conjugacy class
corresponding to $(m_1,n_1)$, ..., $(m_s,n_s)$ is given by the
image of $\prod_{1\leqslant j\leqslant s}GL(m_j)$ in $GL(p)$ given
by some fixed isomorphism of $\prod_{1\leq j\leq
s}\mathbb{C}^{m_j}\otimes \mathbb{C}^{p_j}$ with $\mathbb{C}^p$,
where $p_j=d_j +n_j(1-g)=n_jp/n$ and $d_j=n_jd/n$. Moreover, if
$N$ is the normaliser of $R$ in $GL(p)$, then its connected
component of the identity is given by
\begin{equation}\label{componente conexa}N_0\cong \prod_{1\leq
j\leq s}(GL(m_j)\times
GL(p_j))/\mathbb{C}^{\ast},\end{equation}where $\mathbb{C}^{\ast}$
is the diagonal central one paremeter subgroup of $GL(m_j)\times
GL(p_j)$, and $\pi_0 (N)=N/N_0$ is the product
\begin{equation}\label{pi0}\prod_{j\geq 0, k\geq 0}S(\sharp \{
i:m_i=j \textnormal{$ $ and $ $} n_i=k\})\end{equation}where
$S(n)$ is the symmetric group of permutations of a set of $n$
elements.}\end{parrafo}

Then, at the $k$-stage of the blow-up, there is a sequence
$(m_1,n_1)$, ..., $(m_s,n_s)$ satisfying (i), (ii) and (iii)
above, and such that a representative $R_k$ of the corresponding
conjugacy class is given by$$R_k=\prod_{1\leqslant j\leqslant
s}GL(m_j)$$embedded in $GL(p)$ as before. The variety
$GL(p)Z_{R_k}^{ss}$ is identified with the set$$\{ h\in
\mathcal{R}^{ss}(n,d):E_h\cong m_1 E_1 \oplus \ldots \oplus m_s
E_s \}$$where $E_j$ are semistable vector bundles of rank $n_j$
and degree $d_j=n_jd/n$. To obtain $\mathcal{R}_{k+1}(n,d)$ we
need to blow up $\mathcal{R}_{k}^{ss}(n,d)$ along the proper
transform of the variety $GL(p)Z_{R_k}^{ss}$ of
$\mathcal{R}^{ss}(n,d)$, then $\mathcal{R}_{k+1}(n,d)\backslash
\mathcal{R}_{k+1}^{ss}(n,d)$ will be the set of those $h \in
\mathcal{R}(n,d)$ such that $\grad E_h \cong \grad (m_1 E_1 \oplus
\ldots \oplus m_s E_s)$. Moreover, bearing in mind that
$Z_{R_k}^s$ is the set of properly stable points of $Z_{R_k}$ with
respect to the action of $N_k/R_k$, where $N_k$ is the normaliser
of $R_k$ in $GL(p)$, the stratum $\Sigma_{R_k}=GL(p)Z_{R_k}^s$ is
given by the set of $h\in \mathcal{R}^{ss}(n,d)$ such that
\begin{equation}\label{fibrados split}E_h \cong m_1 E_1 \oplus \ldots \oplus m_s
E_s\end{equation}where $E_j$ are non-isomorphic properly stable
vector bundles of rank $n_j$ and degree $d_j=n_jd/n$.

In \cite[Section 7]{AB} it is explained how to
compute the normal bundle $\mathcal{N}_k$ at a point $h$ of
$GL(p)Z_{R_k}^{ss}$. Let $GL(p)\widehat{Z}_{R_k}^{ss}$ be the proper
transform of $GL(p){Z}_{R_k}^{ss}$ in $\mathcal{R}_k^{ss}(n,d)$.
Then, the normal bundle to $GL(p)\widehat{Z}_{R_k}^{ss}$ at a
point $h$ in $\mathcal{R}_{k}^{ss}(n,d)$ such that
\begin{equation}\label{descomposition}E_h \cong m_1 E_1 \oplus \ldots \oplus m_s
E_s \end{equation}where $E_j$ are non-isomorphic properly stable
vector bundles of rank $n_j$ and degree $d_j=n_jd/n$, is
identified with $H^1(\End'_{\oplus}E_h)$, where $\End'_{\oplus}E_h
=\End E_h /\End_{\oplus }E_h$. Here $\End E_h $ is the vector
bundle of holomorphic endomorphisms of $E_h$ and $\End_{\oplus
}E_h$ is the subbundle of $\End E_h$ consisting of those
endomorphisms that preserve the decomposition
(\ref{descomposition}) (see \cite{K4} for details). One has that
$$\End'_{\oplus}E_h \cong \bigoplus_{i,j}(m_i m_j -\delta_i^j
)\Hom (E_i ,E_j)$$where $\delta_i^j$ is the Kronecker delta. Then
\begin{equation}\label{normalFibrados}H^1(\End'_{\oplus}E_h)\cong \bigoplus_{i,j=1}^s \mathbb{C}^{m_i
m_j -\delta_i^j}\otimes H^1(E_i^{\ast}\otimes E_j).\end{equation}Then,
using Riemann-Roch and bearing in mind that every
morphism between two properly stable vector bundles of the same
slope is either zero or an isomorphism, we have that\begin{equation}\label{codimension estratos
simples}\dim H^1(\End'_{\oplus}E_h)=(g-1)(n^2 - \sum_{1\leq j \leq
s}n_j^2)+\sum_{1\leq j\leq s}(m_j^2 -1),\end{equation}note that
this dimension coincides with the codimension of
$\Sigma_{R_k}=GL(p)Z_{R_k}^s$ in $\mathcal{R}^{ss}(n,d)$, that is
$\lambda (R_k)$ (see (\ref{codimS1})). Regarding the weights of the representation of $R_k$ on the normal (\ref{normalFibrados}), from \cite[Section 5]{K5} one has that these are of the form $\xi =\eta -\eta'$ where $\eta$ and $\eta'$ are weights of the standard representation of $R_k$ on $\oplus_{i=1}^s\mathbb{C}^{m_i}$.

\begin{parrafo}\label{parrafo ni me acuerdo}\textnormal{We now study the equivariant Hodge--Poincar\'e polynomial of
$\Sigma_R$ with respect to $GL(p)$ for certain $R$. Since
$GL(p)Z_{R}^s\cong GL(p) \times_N Z_R^s$ then
$H_{GL(p)}^{\ast}(\Sigma_R)\cong H_N^{\ast}(Z_R^{s})$ which is an
isomorphism of mixed Hodge structures, it is enough to compute the
equivariant Hodge--Poincar\'e polynomial of $Z_R^{s}$ with respect
to $N$. From Lemma \ref{Lemma Frances} $H_N^{\ast}(Z_R^{s})$ is
the invariant part of $H^{\ast}_{N_0}(Z_{R}^s)$ under the action
of the finite group $\pi_0 N=N/N_0$, and
$$H^{\ast}_{N_0}(Z_{R}^s)\cong H^{\ast}(BR)\otimes
H_{N_0/R}^{\ast}(Z_{R}^s).$$Note that for a given $R=\prod_{i=1}^s
GL(m_i)$ that corresponds to a sequence of pairs $(m_1,n_1)$, ...,
$(m_s,n_s)$ satisfying the properties of Paragraph
\ref{3propiedades}, one has that $$N_0/R \cong \prod_{i=1}^s
GL(p_i)/\mathbb{C}^{\ast}$$where $p_i=d_i+(1-g)n_i=n_i n/p$.
Moreover, it is known that $GL(p_i)$ acts on
$\mathcal{R}_{(0)}^s(n_i,d_i)$ in such a way that
$\mathbb{C}^{\ast}$ acts trivially and $SL(p_i)$ acts freely, then
one deduces that the action of $N_0/R$ on $Z_R^s$ is free. This
implies that \begin{equation}\label{action
libre}H_{N_0/R}^{\ast}(Z_{R}^s)\cong
H^{\ast}(Z_{R}^s/(N_0/R)).\end{equation}The Hodge--Poincar\'e
polynomial of $BR$ can be computed using
identity (\ref{BSL}). Regarding the Hodge--Poincar\'e
polynomial of $Z_{R}^s/(N_0/R)$ it can be computed as follows:  Let $N_l$ be the normaliser of $R_l$ in $GL(p)$ for every $l$, and $(N_{l})_0$ be the connected component of the
identity. In (\ref{GIT para las variedades}) we saw that
$GZ_{R_l}^{ss}/G\cong Z_{R_l}/\!/(N_l/R_l)$, then one has that
\begin{equation}\label{SigmaamgiS}\Sigma_{R_l}/G=GZ_{R_l}^{s}/G\cong
Z_{R_l}^s/(N_l/R_l).\end{equation}We describe first the varieties $Z^s_{R_l}/(N_l/R_l)$ that play a relevant role in order to understand $HP(Z_{R_{l}}^s/((N_{l})_0/R_{l}))(u,v)$. We have already seen that the highest stratum
$\Sigma_{GL(m)}=GL(p)Z_{GL(m)}^s$ corresponds to elements $h \in
\mathcal{R}^{ss}(n,d)$ such that $E_h \cong E'^{\oplus m}$ where
$E'\in \mathcal{M}_{(0)}^s(n',d')$. Then, $\Sigma_{GL(m)}/ GL(p)\cong \mathcal{M}^s_{(0)}(n',d')$ where all stabilizers of the action of $GL(p)$ belong to the conjugacy class $R_1=GL(m)$. In the second step of the blow-up, $R_2=GL(m-1)\times
\mathbb{C}^{\ast}$ and $\Sigma_{GL(m-1)\times
\mathbb{C}^{\ast}}=GL(p)Z_{GL(m-1)\times \mathbb{C}^{\ast}}^s$
consists of points $h \in \mathcal{R}^{ss}(n,d)$ such that $E_h
\cong E'^{\oplus (m-1)}\oplus E''$ where $E'$ and $E''$ are
properly stable bundles of the same rank and degree but
non-isomorphic. One has that
\begin{align*}Z_{R_2}^s/(N_{2}/R_{2})=\Sigma_{R_{2}}/G
& \cong  \big{(}( \mathcal{M}_{(0)}^s(n',d')
\times \mathcal{M}_{(0)}^s(n',d'))\backslash {\Delta}_{2}\big{)}/\pi_0(N_{2})\cong \\& \cong \big{(}( \mathcal{M}_{(0)}^s(n',d')
\times \mathcal{M}_{(0)}^s(n',d'))\backslash {\Delta}_{2}\big{)}/\mathbb{Z}/2, \nonumber \end{align*}where
${\Delta}_{2}:= \{ (E_1 , E_{2}
)\in  \mathcal{M}_{(0)}^s(n',d') \times \mathcal{M}_{(0)}^s(n',d')
$ such that $E_{1}\cong E_{2}\}$. The quotient
$\pi_0(N_{2})=N_{2}/(N_{2})_0$ acts by permuting the
factors, and equals $\mathbb{Z}/2$ (see (\ref{pi0})). }

\textnormal{In general, let $R_k$ be a representative of the conjugacy class
corresponding to a sequence $(m_1,n_1)$, ..., $(m_s,n_s)$ as in
Paragraph \ref{3propiedades}, one has that $$R_k = \prod_{1\leqslant
j\leqslant s}GL(m_j)$$properly embedded in $GL(p)$. Then, the stratum
$\Sigma_{R_k}$ consists of elements $h\in \mathcal{R}^{ss}(n,d)$
such that $E_h \cong m_1 E_1 \oplus \ldots \oplus m_s E_s $ where
$E_j$ are mutually non-isomorphic properly stable vector bundles of rank
$n_j$ and degree $d_j=n_jd/n$. In (\ref{pi0}) we pointed out that $\pi_0 (N_k)=N_k/(N_k)_0\cong \prod_{j\geq 0, r\geq 0}S(\sharp \{
i:m_i=j$ and $n_i=r\})$ where
$S(n)$ is the symmetric group of permutations of a set of $n$
elements. Then, from (\ref{SigmaamgiS}) we have that
\begin{align}\label{strato isomorfo2 abajo}Z_{R_{k}}^s/(N_{k}/R_{k})=\Sigma_{R_{k}}/G
\cong \big{(}( \mathcal{M}_{(0)}^s(n_1,d_1)
\times \ldots \times \mathcal{M}_{(0)}^s(n_s,d_s))\backslash {\Delta}_{k}\big{)}/\pi_0(N_{k}). \end{align}
Let S denote the set of collections $\{S_1,\ldots , S_h\}$ of subsets of $\{1,\ldots ,s\}$ such that $S_1 \sqcup \ldots \sqcup S_h = \{1,\ldots ,s\}$ and,
 for each $m$, $1\leq m\leq h$, we have $n_i = n_j$ whenever $i, j \in S_m$. Then $$\Delta_k = \bigsqcup_S \Delta_S$$
where $\Delta_S = \{ (E_1, \ldots , E_s) \in \mathcal{M}_{(0)}^s(n_1, d_1) \times \ldots \times \mathcal{M}_{(0)}^s(n_s, d_s)$ such that $E_i \cong E_j$ if and only if $i,j\in S_m$ for some $m$, $1\leq m\leq h\}$.
The quotient
$\pi_0(N_{k})=N_{k}/(N_{k})_0$ acts on (\ref{strato isomorfo2 abajo}) by permuting the
factors. Moreover\begin{align}\label{Para
N_0}Z_{R_{k}}^s/((N_{k})_0/R_{k})\cong ( \mathcal{M}_{(0)}^s(n_1,d_1)
\times \ldots \times \mathcal{M}_{(0)}^s(n_s,d_s))\backslash {\Delta}_{k}.\end{align}Hence, bearing in mind
that $\Delta_k$ is
a disjoint union of non-singular varieties, from
(\ref{identityDP}) and Theorem \ref{Theorem2.2} the
Hodge--Poincar\'e polynomial of
$Z_{R_{k}}^s/((N_{k})_0/R_{k})$ is given by
\begin{align}\label{HP componente conexa}HP(Z_{R_{k}}^s/((N_{k})_0/R_{k}))(u,v)&= \prod_{i=1}^s
 HP(\mathcal{M}_{(0)}^s(n_i,d_i)
)(u,v)-
(uv)^{\lambda_k}HP({\Delta}_{k})(u,v)=\\& = \prod_{i=1}^s
HP(\mathcal{M}_{(0)}^s(n_i,d_i)
)(u,v)- \sum_{S}
(uv)^{\lambda_k + \lambda_{S}}HP({\Delta}_{S})(u,v) \nonumber \end{align}where $\lambda_k$ is the
codimension of ${\Delta}_{k}$ in $\mathcal{M}_{(0)}^s(n_1,d_1) \times \ldots \times
\mathcal{M}_{(0)}^s(n_s,d_s)$, $ \lambda_{S}$ the codimension of ${\Delta}_{S}$ in $\Delta_k$.}

\textnormal{For future computations we need to understand the
equivariant cohomology group $H_S^{\ast}(Z_R^{s})$ for a subgroup
$S$ of $N$ such that $N_0 \subseteq RS$. From the latter one gets
that $$N_0/R\cong S_0/R\cap S_0$$where $N_0$ and $S_0$ are the
connected components of the identity of $N$ and $S$ respectively.
Then $H_S^{\ast}(Z_R^{s})$ is the invariant part of
$H^{\ast}_{S_0}(Z_{R}^s)$ under the action of the finite group
$\pi_0 S=S/S_0$, induced by the natural map $\pi_0 S \rightarrow
\pi_0 N$, and
\begin{equation}\label{vaya lio}H^{\ast}_{S_0}(Z_{R}^s)\cong
H^{\ast}(B(R\cap S_0))\otimes
H^{\ast}(Z_{R}^s/(N_0/R)),\end{equation}where $Z_{R}^s/(N_0/R)$ is
given by (\ref{Para N_0}).}\end{parrafo}

\begin{parrafo}\label{explicito para fibrados}\textnormal{Using Paragraph \ref{el parrafo de la beta barra}, let
$\underline{\mathcal{B}}(\rho_l)$ be the set of $\beta$-sequences
for the representation $\rho_l$ of $R_l$ on the corresponding
normal bundle, and for each $\beta$-sequence $\underline{\beta}$,
let $q(\underline{\beta})$, $d(\underline{\beta},l)$,
$z(\underline{\beta},l)$, and $w(\underline{\beta},R_l, GL(p))$ be
the positive integers defined in that paragraph. Then, Theorem \ref{teorema
explicito} tells us that the equivariant Hodge--Poincar\'e series of $\mathcal{R}_{(0)}^{s}(n,d)$
with respect to $GL(p)$ for $(n,d)\neq 1$ is given
by\begin{align}&\label{El-ultimo}HP_{GL(p)}
(\mathcal{R}_{(0)}^{s}(n,d))(u,v)=HP_{GL(p)}
(\mathcal{R}^{ss}(n,d))(u,v)-\sum_{l=1}^{\tau}(uv)^{\lambda(R_l)}HP_{N_l}(Z_{R_l}^s)(u,v)+\\&
+ \sum_{l=1}^{\tau}
 \sum_{0\neq \underline{\beta} \in \underline{\mathcal{B}}(\rho_l)
}(-1)^{q(\underline{\beta})}(uv)^{d(\underline{\beta}
,l)}(1-(uv)^{z(\underline{\beta},l)+1})
w^{-1}(\underline{\beta},R_l,G)\cdot HP_{N_l \cap \Stab
\underline{\beta} } (Z_{R_l}^s)(u,v).\nonumber
\end{align}}

\textnormal{The central one-parameter subgroup
$\mathbb{C}^{\ast}$ of $GL(p)$ acts trivially on
$\mathcal{R}_{(0)}^s(n,d)$ and such that $SL(p)$ acts freely,
then$$H^{\ast}_{GL(p)}(\mathcal{R}_{(0)}^s(n,d))\cong
H^{\ast}_{SL(p)}(\mathcal{R}_{(0)}^s(n,d)) \otimes
H^{\ast}(B\mathbb{C}^{\ast})$$and \begin{equation}\label{hpcon lo
de libre para n}HP(\mathcal{M}_{(0)}^s(n,d))(u,v)=
HP_{SL(p)}(\mathcal{R}_{(0)}^s(n,d))(u,v)=(1-uv)\cdot
HP_{GL(p)}(\mathcal{R}_{(0)}^s(n,d))(u,v),\end{equation}so from (\ref{El-ultimo}) one can obtain the Hodge--Poincar\'e polynomial of $\mathcal{M}_{(0)}^s(n,d)$.}

\textnormal{Now,  for every
$\underline{\beta}\in \underline{\mathcal{B}}(\rho_l)$ one has
that $(N_l)_0 \subseteq R_l(N_l\cap \Stab \underline{\beta})$.
Moreover, if $R_l =\prod_{j=1}^q GL(m_j)$ then $R_l \cap \Stab
\underline{\beta}=\prod_{j=1}^q (GL(m_j)\cap \Stab
\underline{\beta})$. Then, from (\ref{vaya lio}) we conclude that
$H^{\ast}_{N_l \cap \Stab \underline{\beta} } (Z_{R_l}^s)(u,v)$ is
the invariant part of$$\big{(}\bigotimes_{1\leq j\leq
s}H^{\ast}(B(GL(m_j)\cap \Stab \underline{\beta}))\big{)}\otimes
H^{\ast}(Z_{R_l}^s/((N_l)_0/R_l))$$under the action of the finite
group $\pi_0 (N_l \cap \Stab \underline{\beta} )=(N_l \cap \Stab
\underline{\beta} )/(N_l \cap \Stab \underline{\beta} )_0$,
induced by the natural map $\pi_0 (N_l \cap \Stab
\underline{\beta} ) \rightarrow \pi_0 N_l$, and
$Z_{R_l}^s/((N_l)_0/R_l)$ is given by (\ref{Para N_0}).}\end{parrafo}

\begin{parrafo}\label{Parte semistable}\textnormal{Regarding the equivariant Hodge--Poincar\'e polynomial of
$\mathcal{R}^{ss}(n,d)$ with respect to $GL(p)$, this was computed
by Earl and Kirwan in \cite{EK}. Every vector bundle $E$ of rank $n$ and degree $d$ has a strictly
ascending canonical filtration $$0=F_0 \subset F_1 \subset \ldots
\subset F_P=E$$such that the quotients $Q_j=E_j/E_{j-1}$ are
semistable and the slopes $\mu (Q_j)=\deg
(Q_j)/\rank({Q_j})=d_j'/n_j'$ satisfy that $\mu (Q_j)> \mu
(Q_{j+1})$ for every $j$. The $P$-tuple
$\overline{\mu}=(\mu (Q_1) ,\ldots ,\mu (Q_P))$ is called the \emph{type} of $E$. Let $
\overline{\mu}_0=(d/n ,\ldots ,d/n)$ and
$$d_{\overline{\mu}}=\sum_{1\leq j< i\leq P}n_i'd_j'-n_j'd_i'+n_i'n_j'(g-1).$$Then, in Theorem 1 of
\cite{EK} it is proved, among other results, that
$HP_{GL(p)}(\mathcal{R}^{ss}(n,d))(u,v)$ is given by the inductive
formula\begin{align*}HP_{GL(p)}(\mathcal{R}^{ss}&(n,d))(u,v)=\\&
=\frac{\prod_{l=1}^n(1+u^lv^{l-1})^g(1+u^{l-1}v^l)^g}
{(1-u^nv^n)\prod_{l=1}^{n-1}(1-u^lv^l)^2}-\sum_{\overline{\mu}\neq
\overline{\mu}_0}(uv)^{d_{\overline{\mu}}}\prod_{1\leq j\leq
P}HP_{GL(p)}(\mathcal{R}^{ss}(n_j',d_j'))(u,v).\nonumber\end{align*}This
formula is valid for both the cases in which $(n,d)=1$ and
$(n,d)\neq 1$. For future computations we need to know
$HP_{GL(p)}(\mathcal{R}^{ss}(2,0))(u,v)$, this is given by (see
\cite[Equation (23)]{EK}, noting the misprint in this equation)\begin{align}\label{HP de
R}HP_{GL(p)}(\mathcal{R}^{ss}(2,0))(u,v)
=\frac{(1+u)^g(1+v)^g(1+u^2v)^g(1+uv^2)^g-(uv)^{g+1}(1+u)^{2g}(1+v)^{2g}}{(1-u^2v^2)(1-uv)^2}.\end{align}Moreover,
$HP_{GL(p)}(\mathcal{R}^{ss}(n,d))(u,v)$ does not change if we
replace $d$ by $d+n\cdot z$ for any $z\in \mathbb{Z}$ (see
\cite[proof of Theorem 1]{EK}). Then, (\ref{HP de R}) gives also
the equivariant Hodge--Poincar\'e polynomial of
$\mathcal{R}^{ss}(2,d)$ with respect to $GL(p)$ for $d$
even.}\end{parrafo}


\section{Explicit computations for rank $2$ vector bundles with
even degree}\label{Cohomological formulae for the moduli space of stable vector bundles
when the rank is 2 and the degree is even}


In this section we compute explicitly
$HP(\mathcal{M}_{(0)}^s(2,d))(u,v)$ for $d$ even. Using Poincar\'e duality, from the Hodge--Poincar\'e polynomial one may obtain the Hodge--Deligne polynomial of $\mathcal{M}_{(0)}^s(2,d)$ for $d$ even. The latter was first computed in \cite{MOV2}. 

We know that $\mathcal{M}(2,d)\cong
\mathcal{R}(2,d)/\!/SL(p)$ where $p=d+2(1-g)$ and
$\mathcal{M}_{(0)}^s(2,d)\cong \mathcal{R}_{(0)}^s(2,d)/SL(p)$.
From (\ref{hpcon lo de libre para n})
\begin{equation}\label{hpcon lo de libre para
2}HP(\mathcal{M}_{(0)}^s(2,d))(u,v)=
HP_{SL(p)}(\mathcal{R}_{(0)}^s(2,d))(u,v)=(1-uv)\cdot
HP_{GL(p)}(\mathcal{R}_{(0)}^s(2,d))(u,v).\end{equation}

We need to understand the stratification $\{ \Sigma_{\gamma
}\}_{\gamma \in \Gamma}$ of $\mathcal{R}^{ss}(2,d)$ such that
$\Sigma_0 =\mathcal{R}_{(0)}^s(2,d)$. To do that we blow up
$\mathcal{R}^{ss}(2,d)$ along the subvarieties
$GL(p)\widehat{Z}_R^{ss}$ where $R$ is a representative of the
conjugacy class of all connected reductive subgroups of dimension
$\dim R$ and $\widehat{Z}_R^{ss}$ is the proper transform of
$Z_R^{ss}:=\{ h\in \mathcal{R}^{ss}(2,d)$ such that $h$ is fixed
by $R\}$ in decreasing order of dimension of $R$. The blow-up is done in two steps and the
indexing set is
$$\Gamma =\{ R_1 \} \sqcup \{ R_1 \}\times \{ \mathcal{B}_1
\backslash \{ 0 \} \} \sqcup \{ R_2 \} \sqcup \{ R_2 \}\times \{
\mathcal{B}_2 \backslash \{ 0 \} \} \sqcup \{ 0 \} ,$$where
$R_1=GL(2)$ and $R_2=T=GL(1)\times GL(1)$ which is the maximal
torus $T$ of $GL(2)$.

\begin{parrafo}\label{Primer stratum}\textnormal{The highest stratum is $\Sigma_{R_1}=
\Sigma_{GL(2)}=GZ_{GL(2)}^s$. Since $R_1=GL(2)$ has maximum
dimension among those reductive subgroups of $GL(p)$ with fixed
points in $\mathcal{R}^{ss}(2,d)$ then
$Z_{GL(2)}^s=Z_{GL(2)}^{ss}$ (see Subsection
\ref{StratificationSection2}). The conjugacy class $R_1=GL(2)$ is
embedded in $GL(p)$ using a fixed isomorphism $\mathbb{C}^2
\otimes \mathbb{C}^{1-g+d/2}\cong \mathbb{C}^p$ and letting
$GL(2)$ act on the first factor. The normaliser of $GL(2)$ in
$GL(p)$ is
$$N(GL(2))=(GL(2)\times GL(1-g+d/2))/\mathbb{C}^{\ast}$$and note
that its connected component, $N_0(GL(2))=N(GL(2))$.}

\textnormal{Then, $Z_{GL(2)}^{ss}$ is the subvariety of
$\mathcal{R}^{ss}(2,d)$ consisting of all $h\in
\mathcal{R}^{ss}(2,d)$ fixed by $GL(2)$. Hence
$$\Sigma_{GL(2)}=GZ_{GL(2)}^s=GL(p)Z_{GL(2)}^{ss}\cong
GL(p)\times_{N(GL(2))}Z_{GL(2)}^{ss}$$is the subvariety of
$\mathcal{R}^{ss}(2,d)$ of those $h$ such that $E_h\cong L\oplus
L$ for some $L\in \Jac^{d/2}$. Bearing in mind the previous
isomorphism one gets that
\begin{equation}\label{eq13}H^{\ast}_{GL(p)}(\Sigma_{GL(2)})\cong
H^{\ast}_{GL(p)}(GL(p)Z_{GL(2)}^{s})\cong
H^{\ast}_{N(GL(2))}(Z_{GL(2)}^{s}).\end{equation}Moreover, from
Lemma \ref{Lemma Frances} and (\ref{Para N_0}) one obtains
\begin{align}\label{eq13}H^{\ast}_{GL(p)}(&\Sigma_{GL(2)})
 \cong H^{\ast}(BGL(2))\otimes
H^{\ast}_{N(GL(2))/GL(2)}(Z_{GL(2)}^{s}) \cong \\& \cong
H^{\ast}(BGL(2))\otimes H^{\ast}(Z_{GL(2)}^{s}/(N(GL(2))/GL(2)))
\cong H^{\ast}(BGL(2))\otimes
H^{\ast}(\Jac^{d/2}).\nonumber\end{align}These are isomorphisms of
pure Hodge structures, so using (\ref{BSL}) they induce the
following identity of Hodge-Poincar\'{e} polynomials
\begin{align}\label{eq13}H&P_{GL(p)}(\Sigma_{GL(2)})(u,v)=HP_{GL(p)}(GL(p)Z_{GL(2)}^{s})(u,v)=\\&
= HP_{N(GL(2))}(Z_{GL(2)}^{s})(u,v)= HP(BGL(2))(u,v)\cdot
HP(\Jac^{d/2})(u,v)
=\frac{(1+u)^g(1+v)^g}{(1-uv)(1-u^2 v^2)}.\nonumber
\end{align}The codimension of $\Sigma_{GL(2)}$ in $\mathcal{R}^{ss}(2,d)$ can be computed from (\ref{codimension estratos simples}), this is
\begin{equation}\label{codimension 1}\lambda (GL(2)):=\codim \Sigma_{GL(2)}=3g.\end{equation}}\end{parrafo}

\begin{parrafo}\label{segundo stratum}\textnormal{To obtain $\sum_{\beta \in \mathcal{B}_1\backslash \{ 0 \} }
(uv)^{\lambda (\beta ,1)}{HP}_{GL(p)}(\Sigma_{\beta ,1})(u,v)$, we
first need to investigate the stratification $\{
\mathcal{S}_{\beta ,1} \}_{\beta \in \mathcal{B}_1}$ of the
variety $\mathcal{R}_1(2,d)$ -i.e., the variety obtained as a
result of the first blow-up- since $\Sigma_{\beta ,1} \cong
\mathcal{S}_{\beta ,1}\setminus E_1$ where $E_1$ is the
exceptional divisor. In order to understand the index set
$\mathcal{B}_1$ from Paragraph \ref{stratification fibra} we need
to look at the representation of $SL(2)$ on the normal
$H^1(\End_{\oplus }'E_h)$ to $GL(p)Z_{GL(2)}^{s}$ at a point $h
\in \mathcal{R}(2,d)$ such that $E_h=L\oplus L$ with $L\in
\Jac^{d/2}$. The normal $H^1(\End_{\oplus }'E_h)$ is equal to
$$H^1(\mathcal{O})\otimes \Lie(SL(2))$$ where $\Lie(SL(2))$ is the
Lie algebra of $SL(2)$. Now, $SL(2)$ acts trivially on
$H^1(\mathcal{O})$ and by conjugation on $\Lie (SL(2))$, so the
weights of the representation are $2$, $0$ and $-2$ each of them
with multiplicity $g$. This implies that there is only one weight
lying in the positive Weyl chamber, so there is only one unstable
stratum to be removed in the blow-up procedure. Let $\mathcal{S}_{\beta ,1}$
be the unique unstable stratum. From Paragraph \ref{Par de
condiciones} (a) we have that $\mathcal{S}_{\beta ,1}$ consists of elements
$h \in \mathcal{R}(2,d)$ such that $\grad E_h=L\oplus L$ with
$L\in \Jac^{d/2}$, then $\Sigma_{\beta ,1} \cong S_{\beta
,1}\setminus \mathbb{P}(H^1(\End_{\oplus }'E_h))$ consists of
elements $h \in \mathcal{R}(2,d)$ such that the vector bundle
$E_h$ is the middle term of a non-split extension
$$0\rightarrow L \rightarrow E_h \rightarrow L \rightarrow 0$$with
$L\in \Jac^{d/2}$. The index $\beta$ is maximum with respect to
the partial order in $\mathcal{B}_1\backslash \{ 0 \}$. From Lemma
\ref{Lemma de los estratos} we have that
$$HP_{GL(p)}(\Sigma_{\beta ,1})(u,v)= HP_{N(GL(2))\cap \Stab
\beta}(Z_{GL(2)}^s)(u,v)\cdot (1-(uv)^{z(\beta ,1)+1}).$$Here,
$z(\beta ,1)=g-1$ and $\Stab \beta$ is the stabiliser of $\beta \in
\mathfrak{t}_+$, where $\mathfrak{t}$ is the Lie algebra of the
maximal torus $T$ of $GL(2)$, under the adjoint action of $GL(p)$.
One has that $\Stab \beta = N_0(T)=(T\times GL(p/2))/\mathbb{C}^{\ast}$, hence
$$N(GL(2))\cap \Stab \beta \cong (T\times
GL(p/2))/\mathbb{C}^{\ast}.$$From Theorem \ref{explicito para
fibrados} one has that $H^{\ast}_{N(GL(2))\cap \Stab
\beta}(Z_{GL(2)}^s)$ is the invariant part of $$H^{\ast}(BT)\otimes
H^{\ast}(Z_{GL(2)}^s/(N_0(GL(2))/GL(2)))$$under the action of
$((T\times GL(p/2))/\mathbb{C}^{\ast})/((T\times
GL(p/2))/\mathbb{C}^{\ast})_0=$Id. Moreover, bearing in mind
identity (\ref{Para N_0}) one has that
$Z_{GL(2)}^s/(N_0(GL(2))/GL(2) )\cong \Jac^{d/2}$, hence
$$H^{\ast}_{N(GL(2))\cap \Stab \beta}(Z_{GL(2)}^s)\cong
H^{\ast}(BT)\otimes H^{\ast}(\Jac^{d/2})$$which is an isomorphism
of pure Hodge structures, then
\begin{align}\label{eq14}HP_{GL(p)}(\Sigma_{\beta
,1})&(u,v)=(1-(uv)^g)\cdot HP(BT)(u,v)\cdot
HP((Z_{GL(2)}^s/(N_0(GL(2))/GL(2)))(u,v) =\\& = (1-(uv)^g)\cdot
\frac{1}{(1-uv)^2}\cdot (1+u)^g(1+v)^g
=\frac{(1-u^gv^g)(1+u)^g(1+v)^g}{(1-u v)^2}.\nonumber
\end{align}Regarding the codimension of $\Sigma_{\beta ,1}$ in $\mathcal{R}^{ss}(2,d)$,
in Remark \ref{El remark que no tenia numero} we saw that this
coincides with the codimension of $\mathcal{S}_{\beta ,1}$. The
latter is given by (\ref{codimension de los estratos normales}),
which in this case is\begin{equation}\label{codimension 2}\lambda
(\beta ,1):=\codim \Sigma_{\beta ,1}=\codim \mathcal{S}_{\beta ,1}=n(\beta ,1)  -\dim
GL(2)/B =2g -\dim
GL(2)/B=2g-1,\end{equation}where $B$ is the Borel subgroup of
$GL(2)$ of upper triangular matrices. Note that $n(\beta ,1)=2g$ since there are only two weights $\alpha$ for the corresponding representation satisfying that $\alpha . \beta < \parallel \beta \parallel^2$ (see (\ref{codimension de los estratos normales})) and both of them have multiplicity $g$. Hence, from the previous analysis one obtains that
\begin{equation}\label{segundo estrato}\sum_{\beta \in \mathcal{B}_1\backslash \{ 0 \} }
(uv)^{\lambda (\beta ,1)}{HP}_{GL(p)}(\Sigma_{\beta
,1})(u,v)=(uv)^{2g-1}\cdot \frac{(1-u^gv^g)(1+u)^g(1+v)^g}{(1-u
v)^2}.\end{equation}}\end{parrafo}

\begin{parrafo}\label{tercer stratum}\textnormal{In the second blow-up
we consider $R_2=T=GL(1)\times GL(1)$ which is the maximal torus
$T$ of $GL(2)$ consisting of all diagonal matrices, embedded in
$GL(p)$ using the above embedding of $GL(2)$ in $GL(p)$. Then
$$N(T)=(N^T\times GL(p/2))/\mathbb{C}^{\ast}$$ where $N^T$ is the
normaliser of $T$ in $GL(2)$. Now
\begin{equation}\label{Sigma}\Sigma_T\cong GL(p){Z}_{T}^{s} \cong
GL(p)\times_{N(T)}{Z}_{T}^{s}\end{equation}is the subvariety of
$\mathcal{R}^{ss}(2,d)$ consisting of all $h$ such that $E_h\cong
L_1 \oplus L_2$ where $L_1 \ncong L_2$ and $L_i \in \Jac^{d/2}$.
Bearing in mind (\ref{Sigma}), the polynomial
$HP_{GL(p)}(\Sigma_{T})(u,v)$ is the same as
$HP_{N(T)}(Z_T^s)(u,v)$. The latter is computed in the following
lemma.}
\begin{lemma}\label{Lemma sin numero}\begin{align*}HP_{N(T)}(Z_T^s)(u,v)&=(1-uv)^{-1}(1-u^2v^2)^{-1}(\frac{1}{2}(1+u)^{2g}(1+v)^{2g}(1+uv)
+\\&
+\frac{1}{2}(1-u^2)^g(1-v^2)^g(1-uv)-(uv)^g(1+u)^g(1+v)^g).\end{align*}
\end{lemma}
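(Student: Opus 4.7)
The plan is to combine the description of $H^{*}_{N(T)}(Z_T^s)$ from Theorem \ref{explicito para fibrados} (with $l=2$ and $R_2=T$) with an explicit decomposition under the Weyl group action $\pi_0(N(T))\cong\mathbb{Z}/2$. Since $T=(\mathbb{C}^{*})^2$ is embedded in $GL(p)$ via a fixed isomorphism $\mathbb{C}^{2}\otimes\mathbb{C}^{p/2}\cong\mathbb{C}^p$, and $N(T)=(N^T\times GL(p/2))/\mathbb{C}^{*}$ where $N^T$ is the normaliser of $T$ in $GL(2)$, we have $\pi_0(N(T))=N^T/T\cong\mathbb{Z}/2$, while by (\ref{Para N_0}) $Z_T^s/(N_0(T)/T)\cong \Jac^{d/2}\times\Jac^{d/2}\setminus\Delta$ with $\Delta$ the diagonal. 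Theorem \ref{explicito para fibrados} therefore gives
\[
HP_{N(T)}(Z_T^s)(u,v)=HP\bigl((A\otimes B)^{\mathbb{Z}/2}\bigr)(u,v),
\]
where $A=H^{*}(BT)=\mathbb{Q}[x_1,x_2]$ and $B=H^{*}(\Jac^{d/2}\times\Jac^{d/2}\setminus\Delta)$, with $\mathbb{Z}/2$ acting by simultaneously swapping the generators $x_1\leftrightarrow x_2$ and the two Jacobian factors.

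Writing $A^{\pm},B^{\pm}$ for the $(\pm 1)$-eigenspaces, one has $(A\otimes B)^{\mathbb{Z}/2}=A^{+}\otimes B^{+}\oplus A^{-}\otimes B^{-}$, so the computation reduces to $HP(A^{\pm})$ and $HP(B^{\pm})$. The first is immediate: with $y=x_1+x_2$, $z=x_1-x_2$ both of Hodge type $(1,1)$, $A^{+}=\mathbb{Q}[y,z^2]$ and $A^{-}=z\cdot\mathbb{Q}[y,z^2]$, so $HP(A^{+})=[(1-uv)(1-u^2v^2)]^{-1}$ and $HP(A^{-})=uv\cdot[(1-uv)(1-u^2v^2)]^{-1}$.

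The main step is $HP(B^{\pm})=\tfrac{1}{2}[HP(B)\pm HP_{\sigma}(B)]$, where $HP_{\sigma}$ denotes the character-weighted analogue with $\sigma\in\mathbb{Z}/2$ the non-trivial element. For $HP(B)$ I would combine the additivity of the Hodge--Deligne polynomial (Theorem \ref{Theorem2.2}) on $\Jac^{d/2}\times\Jac^{d/2}=B\sqcup\Delta$ with identity (\ref{identityDP}) applied to the smooth variety $B$, obtaining
\[
HP(B)(u,v)=(1+u)^{2g}(1+v)^{2g}-(uv)^{g}(1+u)^{g}(1+v)^{g}.
\]
For $HP_{\sigma}(B)$ one applies the equivariant additivity $\mathcal{H}_{\sigma}(\Jac^{d/2}\times\Jac^{d/2})=\mathcal{H}_{\sigma}(B)+\mathcal{H}_{\sigma}(\Delta)$. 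Here $\sigma$ acts trivially on $\Delta\cong\Jac^{d/2}$, so $\mathcal{H}_{\sigma}(\Delta)=(1+u)^{g}(1+v)^{g}$; and the swap $\tau(x,y)=(y,x)$ acts on $H^{*}(\Jac^{d/2}\times\Jac^{d/2})$ by the graded swap $\alpha\otimes\beta\mapsto(-1)^{|\alpha||\beta|}\beta\otimes\alpha$. The trace therefore picks out only the diagonal terms $\alpha\otimes\alpha$ with eigenvalue $(-1)^{|\alpha|}$, and refining by Hodge type yields $\mathcal{H}_{\sigma}(\Jac^{d/2}\times\Jac^{d/2})(u,v)=(1-u^2)^{g}(1-v^2)^{g}$. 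Equivariant Poincar\'e duality (which is sign-free because $\sigma$ is holomorphic, hence orientation-preserving) then gives
\[
HP_{\sigma}(B)(u,v)=(1-u^2)^{g}(1-v^2)^{g}-(uv)^{g}(1+u)^{g}(1+v)^{g}.
\]

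The main obstacle in this plan is the careful bookkeeping of the graded sign $(-1)^{|\alpha||\beta|}$ and its interaction with the $(-1)^{p+q+k}$ in the definition of the Hodge--Poincar\'e series; the key point is that for $\alpha\otimes\alpha$ with $\alpha\in H^{k}$ the eigenvalue is $(-1)^k$, which produces the factors $(1-u^2)^g(1-v^2)^g$ in place of the naive $(1+u)^{2g}(1+v)^{2g}$. To finish, I would substitute the explicit $HP(A^{\pm})$ and the expressions $HP(B^{\pm})=\tfrac12[HP(B)\pm HP_{\sigma}(B)]$ into $HP(A^{+})HP(B^{+})+HP(A^{-})HP(B^{-})$ and collect coefficients of $(1+u)^{2g}(1+v)^{2g}$, $(1-u^2)^{g}(1-v^2)^{g}$ and $(uv)^{g}(1+u)^{g}(1+v)^{g}$. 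Using $\frac{1+uv}{(1-uv)(1-u^2v^2)}=\frac{1}{(1-uv)^2}$ to combine the $A^{+}$ and $A^{-}$ contributions, the $(1+u)^{2g}(1+v)^{2g}$ and $(1-u^2)^g(1-v^2)^g$ terms acquire the factors $(1+uv)$ and $(1-uv)$ respectively, and the result is exactly the stated formula.
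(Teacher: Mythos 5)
Your proposal is correct and follows essentially the same route as the paper: the reduction (via Lemma \ref{Lemma Frances} and (\ref{Para N_0}), which is the content you quote from Theorem \ref{explicito para fibrados}) to the $\mathbb{Z}/2$-invariants of $H^{\ast}(BT)\otimes H^{\ast}(\Jac^{d/2}\times\Jac^{d/2}\setminus\Delta)$, followed by the eigenspace splitting $HP^{+}(BT)HP^{+}+HP^{-}(BT)HP^{-}$, and your trace identity $HP^{\pm}=\tfrac12[HP\pm HP_{\sigma}]$ with eigenvalue $(-1)^{|\alpha|}$ on diagonal classes $\alpha\otimes\alpha$ is exactly the paper's symmetric/antisymmetric dimension count yielding $(1-u^2)^g(1-v^2)^g$. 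The only cosmetic differences are your explicit invariant-ring description $\mathbb{Q}[y,z^2]\oplus z\,\mathbb{Q}[y,z^2]$ of $H^{\ast}(BT)^{\pm}$ and your use of the $\sigma$-weighted Hodge--Deligne polynomial together with equivariant Poincar\'e duality where the paper excises the diagonal eigenspace by eigenspace; all intermediate formulas agree with (\ref{mas}), (\ref{menos}) and (\ref{masmenos}).
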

\begin{proof}By Lemma \ref{Lemma Frances} we know that $H^{\ast}_{N(T)}(Z_T^s)$ is the invariant part of $H^{\ast}(BT)\otimes H^{\ast}(Z_T^s/(N_0(T)/T))$ under the action of
the finite group $\pi_0 (N(T))=N(T)/N_0(T)=N^T/T\cong \mathbb{Z}/2$. Moreover, by 
(\ref{Para
N_0}) one has that $Z_T^s/(N_0(T)/T)\cong \Jac^{d/2}\times
\Jac^{d/2}\setminus \Delta$ where $\Jac^{d/2}$ is the Jacobian of degree $d/2$ and $\Delta$  is the diagonal of $\Jac^{d/2}\times
\Jac^{d/2}$.

Then, the Hodge--Poincar\'e polynomial $HP_{N(T)}(Z_T^s)(u,v)$ is given by\begin{equation}\label{Polinomio suma}HP^+(BT)(u,v)HP^+(\Jac^{d/2}\times
\Jac^{d/2}\setminus \Delta)(u,v)+HP^-(BT)(u,v)HP^-(\Jac^{d/2}\times
\Jac^{d/2}\setminus \Delta)(u,v)\end{equation}where the subscripts $+$ and $-$ refer to the corresponding eigenspaces of eigenvalues $+1$ and $-1$ for the action of $\mathbb{Z}/2$ in both $H^{\ast}(BT)$ and $H^{\ast}(\Jac^{d/2}\times
\Jac^{d/2}\setminus \Delta)$ respectively. Moreover, the Hodge--Poincar\'e polynomial of
$\Jac^{d/2} \times \Jac^{d/2} \setminus \Delta$ is given by
(\ref{HP componente conexa}). The codimension
of $\Delta \cong \Jac^{d/2}$ in $\Jac^{d/2} \times \Jac^{d/2}$ is $g$, then$$HP(\Jac^{d/2} \times \Jac^{d/2} \setminus \Delta)(u,v)=HP(\Jac^{d/2} \times \Jac^{d/2})(u,v)-(uv)^gHP(\Jac^{d/2})(u,v),$$hence$$HP^+(\Jac^{d/2} \times \Jac^{d/2} \setminus \Delta)(u,v)=HP^+(\Jac^{d/2} \times \Jac^{d/2})(u,v)-(uv)^gHP^+(\Jac^{d/2})(u,v),$$and the same is satisfied for $HP^-(\Jac^{d/2} \times \Jac^{d/2} \setminus \Delta)(u,v)$.
We have that $H^{\ast}(BT)\cong H^{\ast}(BGL(1))\otimes H^{\ast}(BGL(1))$ and $H^{\ast}(\Jac^{d/2}\times \Jac^{d/2} )\cong H^{\ast}(\Jac^{d/2})\otimes H^{\ast}(\Jac^{d/2})$. These are isomorphisms of mixed Hodge structures, actually these are isomorphisms of pure Hodge structures. The action of the non-trivial element of $\mathbb{Z}/2$ on \begin{equation*}H^{p,q}(\Jac^{d/2}\times \Jac^{d/2} )\cong H^{p}(\Jac^{d/2})\otimes H^{q}(\Jac^{d/2})\cong \bigoplus_{p_1+p_2=p, q_1+q_2=q}H^{p_1,q_1}(\Jac^{d/2})\otimes H^{p_2,q_2}(\Jac^{d/2})
\end{equation*}is given by $a\otimes b \in H^{p_1,q_1}(\Jac^{d/2})\otimes H^{p_2,q_2}(\Jac^{d/2})$ goes to $(-1)^{(p_1+q_1)(p_2+q_2)}b\otimes a$. Analogously for $H^{\ast}(BT)$. Note that $\mathbb{Z}/2$ acts trivially on the diagonal.

One has that $$HP^+(\Jac^{d/2} \times \Jac^{d/2})(u,v)=\sum_{p,q}(-1)^{p+q}\dim \Sym (H^{p,q}(\Jac^{d/2} \times \Jac^{d/2}))u^pv^q$$where $\Sym$ denotes the symmetric part, and $HP^-(\Jac^{d/2} \times \Jac^{d/2})(u,v)=\sum_{p,q}(-1)^{p+q}\dim \Ant (H^{p,q}(\Jac^{d/2} \times \Jac^{d/2}))u^pv^q$ where $\Ant$ denotes the antisymmetric part. The same is satisfied for $BT$.

Now, when $(p_1,q_1)\neq (p_2,q_2)$ one has that $\dim \Sym (H^{p,q}(\Jac^{d/2} \times \Jac^{d/2}))=\dim \Ant (H^{p,q}(\Jac^{d/2} \times \Jac^{d/2}))=\frac{1}{2}\dim (H^{p,q}(\Jac^{d/2} \times \Jac^{d/2}))$. When $(p_1,q_1)= (p_2,q_2)$ elements of the form $a\otimes a$ for $a\in H^{p_1,q_1}(\Jac^{d/2})$ also need to be considered. Then, when $p_1+q_1$ is even these elements are symmetric, and when $p_1+q_1$ is odd they are antisymmetric. Hence, it is satisfied that $$\dim \Sym (H^{2p_1,2q_1}(\Jac^{d/2}\times \Jac^{d/2}))-\dim \Ant (H^{2p_1,2q_1}(\Jac^{d/2}\times \Jac^{d/2}))=(-1)^{p_1+q_1}\dim H^{p_1,q_1}(\Jac^{d/2}),$$and $$\dim \Sym (H^{2p_1,2q_1}(\Jac^{d/2}\times \Jac^{d/2}))+\dim \Ant (H^{2p_1,2q_1}(\Jac^{d/2}\times \Jac^{d/2}))=\dim H^{2p_1,2q_1}(\Jac^{d/2}\times \Jac^{d/2}).$$Then\begin{align*}HP^+&(\Jac^{d/2} \times \Jac^{d/2})(u,v)=\sum_{p,q}(-1)^{p+q}\dim \Sym (H^{p,q}(\Jac^{d/2} \times \Jac^{d/2}))u^pv^q=\\& =
\sum_{\substack{(p_1,q_1)\neq (p_2,q_2)\\ p_1+p_2=p\\ q_1+q_2=q}}(-1)^{p_1+q_1+p_2+q_2}\frac{1}{2}\dim (H^{p_1,q_1}(\Jac^{d/2}) \otimes H^{p_2,q_2}(\Jac^{d/2}))u^{p_1+q_1}v^{p_2+q_2}+\nonumber \\& +\sum_{\substack{2 p_1=p\\ 2q_1=q}}\frac{1}{2}\big{[}(\dim H^{p_1,q_1}(\Jac^{d/2}) )^2u^{p_1+q_1}v^{p_2+q_2}+(-1)^{p_1+q_1}\dim H^{p_1,q_1}(\Jac^{d/2})u^{2p_1}v^{2q_1}\big{]}\nonumber =\\&
= \frac{1}{2}[HP(\Jac^{d/2})(u,v)]^2+\frac{1}{2}HP(\Jac^{d/2})(-u^2,-v^2).\nonumber\end{align*}Bearing in mind that $\mathbb{Z}/2$ acts trivially on the diagonal, one gets that \begin{align}\label{mas}HP^+&(\Jac^{d/2} \times \Jac^{d/2}\setminus \Delta)(u,v)=\\& =\frac{1}{2}[HP(\Jac^{d/2})(u,v)]^2+\frac{1}{2}HP(\Jac^{d/2})(-u^2,-v^2)-(uv)^gHP(\Jac^{d/2})(u,v)=\nonumber \\& =\frac{1}{2}(1+u)^{2g}(1+v)^{2g}+\frac{1}{2}(1-u^2)^g(1-v^2)^g-(uv)^g(1+u)^g(1+v)^g.\nonumber \end{align}Analogously\begin{align}\label{menos}HP^-(\Jac^{d/2} \times \Jac^{d/2}\setminus \Delta)(u,v)&=\frac{1}{2}[HP(\Jac^{d/2})(u,v)]^2-\frac{1}{2}HP(\Jac^{d/2})(-u^2,-v^2) = \\& =\frac{1}{2}(1+u)^{2g}(1+v)^{2g}-\frac{1}{2}(1-u^2)^g(1-v^2)^g.\nonumber \end{align}Repeating the previous argument for $H^{\ast}(BT)$ but taking into account that $H^{p,q}(BT)\neq 0$ only when $p=q$, we obtain \begin{equation}\label{masmenos}\textnormal{$HP^+(BT)(u,v)=\frac{1}{(1-uv)(1-u^2v^2)}$ $ $ $ $ and $ $ $ $ $HP^-(BT)(u,v)=\frac{uv}{(1-uv)(1-u^2v^2)}.$}\end{equation}Finally, from (\ref{Polinomio suma}), (\ref{mas}), (\ref{menos}), and (\ref{masmenos}) we conclude.\end{proof}
\textnormal{The codimension of $\Sigma_{T}$ in $\mathcal{R}^{ss}(2,d)$ can be
computed from (\ref{codimension estratos simples}), this is
\begin{equation}\label{codimension T}\lambda (T):=\codim \Sigma_{T}=2g-2.\end{equation}}\end{parrafo}

\begin{parrafo}\label{cuarto stratum}\textnormal{Regarding $\sum_{\beta \in \mathcal{B}_2\backslash \{ 0 \} }
(uv)^{\lambda (\beta ,2)}{HP}_{GL(p)}(\Sigma_{\beta ,2})(u,v)$ we
have to look at the representation of $T$ on the normal
$H^1(\End_{\oplus }'E)$ to $GL(p)\widehat{Z}_{T}^{ss}$ at a point
$h\in \mathcal{R}^{ss}(2,d)$ such that $E_h=L_1\oplus L_2$ where
$L_1 \ncong L_2 $ and $L_i \in \Jac^{d/2}$. Here
$$H^1(\End_{\oplus }'E)=H^1(\Hom (L_1 ,L_2 ))\oplus H^1(\Hom (L_2 ,L_1
)),$$a diagonal matrix diag$(t_1 ,t_2)$ acts as multiplication by
$t_1 t_2^{-1}$ on the first factor and by $t_2t_1^{-1}$ on the
second. Then the weights of the representation of $T\cap SL(2)$
are $2$ and $-2$ each with multiplicity $g-1$. This implies that
again there is only one unstable stratum $\mathcal{S}_{\beta ,2}$
to be removed. Hence the index $\beta$ of this stratum is maximum
for the given partial order of $\mathcal{B}_2$. From Paragraph
\ref{Par de condiciones} (a) we know that this stratum is the
proper transform of the set of all $h \in \mathcal{R}^{ss}(2,d)$
such that $\grad E_h\cong L_1 \oplus L_2$ where $L_i \in
\Jac^{d/2}$. Hence $\Sigma_{\beta ,2}=\mathcal{S}_{\beta
,2}\backslash \mathbb{P}(H^1(\Hom (L_1 ,L_2 ))\oplus H^1(\Hom (L_2
,L_1 )))$ consists of $h \in \mathcal{R}^{ss}(2,d)$ such that
$E_h$ is the middle term of a non-split extension
$$0\rightarrow L_1 \rightarrow E_h \rightarrow L_2 \rightarrow
0$$with $L_1\ncong L_2$ and $L_i\in \Jac^{d/2}$. Again, from Lemma
\ref{Lemma de los estratos} we have that
$$HP_{GL(p)}(\Sigma_{\beta ,2})(u,v)= HP_{N(T)\cap \Stab
\beta}(Z_{T}^s)(u,v)\cdot (1-(uv)^{z(\beta ,2)+1}).$$Here,
$z(\beta ,2)=g-2$ and
$N(T)\cap \Stab \beta \cong (T\times
GL(p/2))/\mathbb{C}^{\ast}$. From Theorem \ref{explicito para
fibrados} one has that $H^{\ast}_{N(T)\cap \Stab \beta}(Z_{T}^s)$
is the invariant part of
$$H^{\ast}(BT)\otimes
H^{\ast}(Z_{T}^s/(N_0(T)/T))$$under the action of $((T\times
GL(p/2))/\mathbb{C}^{\ast})/((T\times
GL(p/2))/\mathbb{C}^{\ast})_0=$Id. Moreover, by (\ref{Para N_0})
we have that $Z_{T}^s/(N_0(T)/T)\cong \Jac^{d/2} \times \Jac^{d/2}
\setminus \Delta$ where $\Delta$ is the diagonal in $\Jac^{d/2}
\times \Jac^{d/2}$. The Hodge--Poincar\'e polynomial of
$\Jac^{d/2} \times \Jac^{d/2} \setminus \Delta$ is given by
(\ref{HP componente conexa}). Bearing in mind that the codimension
of $\Delta$ in $\Jac^{d/2} \times \Jac^{d/2}$ is $g$, one obtains
\begin{align}HP_{GL(p)}(\Sigma_{\beta
,2})&(u,v)=(1-(uv)^{g-1})\cdot HP(BT)(u,v)\cdot
HP(Z_{T}^s/(N_0(T)/T))(u,v) =\\&
 = (1-(uv)^{g-1})
((1+u)^{2g}(1+v)^{2g}-(uv)^g(1+u)^{g}(1+v)^{g})(1-uv)^{-2}.\nonumber
\end{align}Regarding the codimension of $\Sigma_{\beta ,2}$ in $\mathcal{R}^{ss}(2,d)$,
in Remark \ref{El remark que no tenia numero} we saw that this
coincides with the codimension of $\mathcal{S}_{\beta ,2}$. The
latter is given by (\ref{codimension de los estratos normales}),
which in this case is\begin{equation}\label{codimension 2}\lambda
(\beta ,2):=\codim \Sigma_{\beta ,2}=\codim \mathcal{S}_{\beta ,2}=n(\beta ,2) -\dim
T/B=g-1 -\dim
T/B=g-1\end{equation}where $B$ is a Borel subgroup of $T$. Hence,
one obtains that
\begin{align}\label{segundo2 estrato}\sum_{\beta \in \mathcal{B}_2\backslash \{ 0 \}
}& (uv)^{\lambda (\beta ,2)}{HP}_{GL(p)}(\Sigma_{\beta
,2})(u,v)=\\& =((uv)^{g-1}-(uv)^{2g-2})
\frac{\big{(}(1+u)^{2g}(1+v)^{2g}-(uv)^g(1+u)^{g}(1+v)^{g}\big{)}}{(1-uv)^{2}}.\nonumber\end{align}}\end{parrafo}

From the previous analysis we obtain the following Theorem.
\begin{theorem}The Hodge--Poincar\'e polynomial of $\mathcal{M}^s_{(0)}(2,d)$ for $d$ even is
given
by\begin{align*}HP(&\mathcal{M}^s_{(0)}(2,d))(u,v)=\frac{1}{2(1-uv)(1-u^2v^2)}\bigg{[}2(1+u)^g(1+v)^g
(1+u^2v)^g(1+uv^2)^g-\\&
-(uv)^{g-1}(1+u)^{2g}(1+v)^{2g}(2-(uv)^{g-1}+(uv)^{g+1})-(uv)^{2g-2}(1-u^2)^g(1-v^2)^g(1-uv)^2\bigg{]}.\end{align*}\end{theorem}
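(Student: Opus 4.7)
The plan is to assemble the four pieces already computed in Paragraphs \ref{Primer stratum}--\ref{cuarto stratum} and feed them into Proposition \ref{propositionidentity} (applied to $X=\mathcal{R}(2,d)$, $G=GL(p)$), so as to obtain $HP_{GL(p)}(\mathcal{R}^s_{(0)}(2,d))(u,v)$, and then to multiply by $(1-uv)$ by means of (\ref{hpcon lo de libre para 2}). Concretely, in the case $n=2$, $d$ even, the index set $\Gamma$ contains only $\{GL(2)\}$, $\{(GL(2),\beta_1)\}$, $\{T\}$, $\{(T,\beta_2)\}$ and $\{0\}$, since at each blow-up stage the relevant $R$-representation on the normal bundle has only one weight in the positive Weyl chamber (cf.\ the analysis of $H^1(\End'_\oplus E_h)$ in Paragraphs \ref{segundo stratum} and \ref{cuarto stratum}). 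So the sum in Proposition \ref{propositionidentity} reduces to four explicit strata.

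The first step is to write
\begin{align*}
HP_{GL(p)}(\mathcal{R}^s_{(0)}(2,d))(u,v) = &\; HP_{GL(p)}(\mathcal{R}^{ss}(2,d))(u,v) \\
&- (uv)^{3g}\, HP_{GL(p)}(\Sigma_{GL(2)})(u,v) - (uv)^{2g-1}\, HP_{GL(p)}(\Sigma_{\beta,1})(u,v) \\
&- (uv)^{2g-2}\, HP_{GL(p)}(\Sigma_{T})(u,v) - (uv)^{g-1}\, HP_{GL(p)}(\Sigma_{\beta,2})(u,v),
\end{align*}
and then to substitute the five ingredients. The semistable term is (\ref{HP de R}). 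The stratum $\Sigma_{GL(2)}$ contributes $(1+u)^g(1+v)^g/((1-uv)(1-u^2v^2))$ from (\ref{eq13}). The stratum $\Sigma_{\beta,1}$ contributes $(1-u^gv^g)(1+u)^g(1+v)^g/(1-uv)^2$ from (\ref{eq14}). The stratum $\Sigma_T$ contributes the polynomial computed in Lemma \ref{Lemma sin numero}, and $\Sigma_{\beta,2}$ contributes $(1-(uv)^{g-1})\big((1+u)^{2g}(1+v)^{2g}-(uv)^g(1+u)^g(1+v)^g\big)/(1-uv)^2$ from (\ref{segundo2 estrato}).

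Next I would apply (\ref{hpcon lo de libre para 2}), namely multiply the resulting expression by $(1-uv)$ to turn $HP_{GL(p)}(\mathcal{R}^s_{(0)}(2,d))$ into $HP(\mathcal{M}^s_{(0)}(2,d))$. At this point every term naturally has denominator a product of factors from $(1-uv)$, $(1-u^2v^2)$, and a common denominator $2(1-uv)(1-u^2v^2)$ can be taken. The $(uv)^{3g}$-term from $\Sigma_{GL(2)}$ combines with the half-symmetric part of the $\Sigma_T$-contribution (the piece with factor $(1+u)^{2g}(1+v)^{2g}(1+uv)$), and with the $(uv)^{g-1}$-piece from $\Sigma_{\beta,2}$, to give the $(uv)^{g-1}(1+u)^{2g}(1+v)^{2g}(2-(uv)^{g-1}+(uv)^{g+1})$ term after recognising that $(uv)^{3g}$ and $(uv)^{2g-1}(1-u^gv^g)$ share the $(uv)^{2g-1}$ factor pulled out of $(uv)^{g-1}$. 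The $\frac{1}{2}(1-u^2)^g(1-v^2)^g(1-uv)$ part of the $\Sigma_T$-contribution (times $(uv)^{2g-2}$ and times $(1-uv)$) yields the $(uv)^{2g-2}(1-u^2)^g(1-v^2)^g(1-uv)^2$ term. Finally, the $\mathcal{R}^{ss}(2,d)$ term contributes $(1+u)^g(1+v)^g(1+u^2v)^g(1+uv^2)^g$ via its numerator, while the $-(uv)^{g+1}(1+u)^{2g}(1+v)^{2g}$ part of (\ref{HP de R}) cancels against pieces absorbed into the $(1+u)^{2g}(1+v)^{2g}$ collection above.

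The routine work, and the main obstacle, is the algebraic bookkeeping: keeping track of signs and of common factors $(1-uv)$, $(1-u^2v^2)$, $(1-u^gv^g)$, $(1-(uv)^{g-1})$ while placing everything over the common denominator $2(1-uv)(1-u^2v^2)$ and collecting like terms. No new geometric input is needed beyond what the paper has already assembled; the only subtlety is that the pieces which are not already written as rational functions with this denominator (namely the $\Sigma_{\beta,1}$ and $\Sigma_{\beta,2}$ pieces, which have denominator $(1-uv)^2$) require clearing denominators by the factor $(1+uv)/(1+uv)$, i.e.\ rewriting $1/(1-uv)^2 = (1+uv)/((1-uv)(1-u^2v^2))$ after multiplication by $(1-uv)$, and then grouping the resulting polynomials in $(uv)^{g-1}, (uv)^g, (uv)^{g+1}$ according to the three summands displayed in the claimed formula. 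Once this is done, the three bracketed terms of the Theorem emerge, and the proof is complete.
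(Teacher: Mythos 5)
Your proposal is correct and follows essentially the same route as the paper's own proof: assemble the five ingredients (the semistable term (\ref{HP de R}) and the four strata contributions with codimension weights $3g$, $2g-1$, $2g-2$, $g-1$ from Paragraphs \ref{Primer stratum}--\ref{cuarto stratum} and Lemma \ref{Lemma sin numero}), subtract via Proposition \ref{propositionidentity} (equivalently Proposition \ref{proposicion sin numero}, which the paper cites), and multiply by $(1-uv)$ using (\ref{hpcon lo de libre para 2}); I verified the algebra does yield the stated formula. One incidental remark: your narrative of which pieces combine is slightly off --- the terms linear in $(1+u)^g(1+v)^g$ (from $\Sigma_{GL(2)}$, $\Sigma_{\beta,1}$, and the $(uv)^g(1+u)^g(1+v)^g$ parts of the $\Sigma_T$ and $\Sigma_{\beta,2}$ contributions) in fact cancel identically rather than feeding into the $(1+u)^{2g}(1+v)^{2g}$ bracket, but this does not affect the validity of the plan.
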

\begin{proof}From Proposition \ref{proposicion sin numero}, Lemma
\ref{Lemma sin numero} and identities (\ref{HP de R}),
(\ref{eq13}), (\ref{codimension 1}), (\ref{segundo estrato}),
(\ref{codimension T}), and (\ref{segundo2 estrato}) we have that
\begin{align*}HP&_{GL(p)}(\mathcal{R}^s_{(0)}(2,d))(u,v)=\frac{1}{2(1-uv)^2(1-u^2v^2)}\bigg{[}2(1+u)^g(1+v)^g
(1+u^2v)^g(1+uv^2)^g-\\&
-(uv)^{g-1}(1+u)^{2g}(1+v)^{2g}(2-(uv)^{g-1}+(uv)^{g+1})-(uv)^{2g-2}(1-u^2)^g(1-v^2)^g(1-uv)^2\bigg{]}.\end{align*}
Bearing in mind identity (\ref{hpcon lo de libre para 2}), we
conclude.\end{proof}

\begin{remark}\textnormal{Since $\mathcal{M}^s_{(0)}(2,d)$ is a smooth variety, from identity
(\ref{identityDP}) we have that the
Hodge--Deligne polynomial of $\mathcal{M}^s_{(0)}(2,d)$ is given
by
$$\mathcal{H}(\mathcal{M}^s_{(0)}(2,d))(u,v)=(uv)^{4g-3}\cdot HP(\mathcal{M}^s_{(0)}(2,d))(u^{-1},v^{-1}),$$then
\begin{align*}\mathcal{H}(\mathcal{M}^s_{(0)}(2,d))(u,v)
=& \frac{1}{2(1-uv)(1-u^2v^2)}[
2(1+u)^g(1+v)^g(1+u^2v)^g(1+uv^2)^g- \\& -
(1+u)^{2g}(1+v)^{2g}(1+2u^{g+1}v^{g+1}-u^2v^2)-(1-u^2)^g(1-v^2)^g(1-uv)^2],
\end{align*}this polynomial was obtained independently by Mu\~noz, Ortega and V\'azquez--Gallo (see \cite[Theorem 5.2]{MOV2}) using
the relation of $\mathcal{M}^s_{(0)}(2,d)$ with certain moduli
spaces of triples.}\end{remark}

\subsection*{Acknowledgements}I would like to thank Prof. Montserrat Teixidor i Bigas and Prof. Peter E. Newstead for their support, encouragement and advice during the realization of my Ph.D. dissertation, of which this paper is part. I also would like to thank Prof. Frances C. Kirwan for very helpful conversations and advice, and for reading preliminary versions of this work. Thank you to the departments of mathematics of the Universities of Liverpool, Oxford and Tufts.

\end{document}